\documentclass[10pt]{amsart}
\usepackage{amsmath,latexsym}

\usepackage{color}

\definecolor{Myblue}{rgb}{0.0,0,0.9}
\definecolor{Mygreen}{rgb}{0.2,1,0}

\newtheorem{thm}{Theorem}[section]
\newtheorem{defthm}{Theorem}[section]
\newtheorem{lem}[thm]{Lemma}
\newtheorem{prop}[thm]{Proposition}
\newtheorem{cor}[thm]{Corollary}

\theoremstyle{definition}
\newtheorem{defn}[thm]{Definition}
\theoremstyle{definition}
\newtheorem{exmp}[thm]{Example}
\newtheorem{remark}[thm]{Remark}

\numberwithin{equation}{section}

\newcommand{\orho}{\overline{\rho}}

\newcommand{\benu}{\begin{enumerate}}
\newcommand{\enu}{\end{enumerate}}

\newcommand{\bema}{\left[\begin{array}}
\newcommand{\ema}{\end{array}\right]}

\newcommand{\gen}[1]{\langle#1\rangle}
\newcommand{\field}{k}
\newcommand{\LL}{\begin{picture}(15,5)\put(2,3){\line(1,0){10}}\end{picture}}
\newcommand{\xLL}[1]{\begin{picture}(15,12)\put(2,3){\line(1,0){10}}\put(7.5,6){\HBCenter{\small
      $#1$}}\end{picture}}

\newcommand{\CluCat}{\mathcal{C}}

\renewcommand{\mod}{\operatorname{mod}}
\newcommand{\op}{\operatorname{op}}
\newcommand{\add}{\operatorname{add}}
\newcommand{\End}{\operatorname{End}}

\newcommand{\Hom}{\operatorname{Hom}}
\newcommand{\Ext}{\operatorname{Ext}}
\newcommand{\Ker}{\operatorname{Ker}}
\newcommand{\Der}{\operatorname{D^b}}
\newcommand{\HomD}{\operatorname{Hom}_{\Der(H)}}
\newcommand{\HomC}{\operatorname{Hom}_{\CluCat}}
\newcommand{\EndD}{\operatorname{End}_{\Der(H)}}
\newcommand{\EndC}{\operatorname{End}_{\CluCat}}
\newcommand{\dual}{\mathrm{D}}
\newcommand{\ealg}{\mathcal{R}}

\newcommand{\Pp}{\mathcal{P}}
\newcommand{\Rr}{\mathcal{R}}

\newcommand{\Ii}{\mathcal{I}}
\newcommand{\rad}{\operatorname{rad}}
\newcommand{\soc}{\operatorname{soc}}
\newcommand{\TOP}{\operatorname{top}}

\newcommand{\gldim}{\operatorname{gldim}}

\newcommand{\Groth}[1]{\operatorname{K_\circ}(#1)}
\newcommand{\proj}{\mathbf{P}}
\newcommand{\inj}{\mathbf{I}}
\newcommand{\An}{\mathbb{A}}
\newcommand{\ZZ}{\mathbb{Z}}
\newcommand{\ra}{\rightarrow}
\newcommand{\oT}{\widetilde{T}}
\newcommand{\oB}{\widetilde{B}}
\newcommand{\id}{\operatorname{id}}
\newcommand{\trivpath}[1]{\operatorname{e}_{#1}}

\def\semidirprod{\begin{picture}(8,8)\qbezier(2,0.5)(5,3.5)(8,6.5)\qbezier(2,6.5)(5,3.5)(8,0.5)\put(2,0.5){\line(0,1){6}}\end{picture}}

\newcommand{\proofend}{\hfill$\Box$\par}

\newcommand{\HVCenter}[1]{\setbox 0=\hbox{#1}%
        \dimen0=\wd0%
        \dimen1=\ht0%
        \divide\dimen0 by 2%
        \divide\dimen1 by 2%
        \hskip -\dimen0%
        \lower \dimen1%
        \box0%
        \hskip -\dimen0}
\newcommand{\HBCenter}[1]{\setbox 0=\hbox{#1}%
        \dimen0=\wd0%
        \dimen1=\ht0%
        \divide\dimen0 by 2%
        \hskip -\dimen0%
        \box0%
        \hskip -\dimen0}
\newcommand{\LTCenter}[1]{\setbox 0=\hbox{#1}%
        \dimen1=\ht0%
        \lower \dimen1%
        \box0%
        \hskip -\dimen0}
\newcommand{\HTCenter}[1]{\setbox 0=\hbox{#1}%
        \dimen0=\wd0%
        \dimen1=\ht0%
        \divide\dimen0 by 2%
        \hskip -\dimen0%
        \lower \dimen1%
        \box0%
        \hskip -\dimen0}
\newcommand{\RTCenter}[1]{\setbox 0=\hbox{#1}%
        \dimen0=\wd0%
        \dimen1=\ht0%
        \hskip -\dimen0%
        \lower \dimen1%
        \box0%
        \hskip -\dimen0}
\newcommand{\RBCenter}[1]{\setbox 0=\hbox{#1}%
        \dimen0=\wd0%
        \dimen1=\ht0%
        \hskip -\dimen0%
        \box0%
        \hskip -\dimen0}
\newcommand{\RVCenter}[1]{\setbox 0=\hbox{#1}%
        \dimen0=\wd0%
        \dimen1=\ht0%
        \divide\dimen1 by 2%
        \hskip -\dimen0%
        \lower \dimen1%
        \box0%
        \hskip -\dimen0}
\newcommand{\LVCenter}[1]{\setbox 0=\hbox{#1}%
        \dimen1=\ht0%
        \divide\dimen1 by 2%
        \lower \dimen1%
        \box0%
        \hskip -\dimen0}

\newcommand{\mylabel}[1]{\put(0,0){\color{white} \circle*{7}}\put(0,0){\circle{7}}\put(0,0){\HVCenter{\tiny\bf #1}}}
\newcommand{\mylabeltwo}[2]{\put(0,0){\color{white} \circle*{7}}\put(0,0){\circle{7}}\put(0,0){\HVCenter{\tiny\bf #1}}\put(0,4.5){\HVCenter{\tiny #2}}}

\newcommand{\mypdott}{\put(0,0){\circle*{2}}}

\parindent=0pt
\parskip=4pt

\begin{document}
\sloppy

\title{From iterated
  tilted algebras to cluster-tilted algebras}

\author[Barot]{Michael Barot}
\address{Michael Barot, 
Instituto de matem\'aticas, Universidad Nacional Aut\'onoma
  de M\'exico, Ciudad Universitaria, C.P. 04510, D.F., Mexico.}
\email{barot@matem.unam.mx}

\author[Fern\'andez]{Elsa Fern\'andez}
\address{Elsa Fern\'andez, 
Facultad de Ingenier\'{\i}a, Universidad Nacional de la
  Patagonia San Juan Bosco, 9120 Puerto Madryn, Argentina.}
\email{elsafer9@gmail.com}

\author[Platzeck]{Mar\'{\i}a In\'es Platzeck}
\address{Mar\'{\i}a In\'es Platzeck, Instituto de
    Matem\'atica, Universidad Nacional del Sur, 8000, Bah\'{\i}a
    Blanca, Argentina.}
\email{platzeck@uns.edu.ar}

\author[Pratti]{Nilda Isabel Pratti} 
\address{Nilda Isabel Pratti,
  Departamento de Matem\'atica, Facultad de Ciencias Exactas y
  Naturales, Funes 3350, Universidad Nacional de Mar del Plata, 7600
  Mar del Plata, Argentina.}
\email{nilpratti@gmail.com}

\author[Trepode]{Sonia Trepode}
\address{Sonia Trepode, 
Departamento de Matem\'atica, Facultad de Ciencias Exactas y
  Naturales, Funes 3350, Universidad Nacional de Mar del Plata, 7600
  Mar del Plata, Argentina.}
\email{strepode@gmail.com}

\begin{abstract}
  In this paper the relationship between iterated tilted algebras and
  cluster-tilted algebras and relation-extensions is studied. In
  the Dynkin case, it is shown that the relationship is very strong
  and combinatorial.
\end{abstract}

\thanks{M.~I.~Platzeck and S.~Trepode are researchers from CONICET,
  Argentina. E.~Fern\'andez, M.~I.~Platzeck, N.~I.~Pratti and
  S.~Trepode thankfully acknowledge partial support from CONICET and
  from Universidad Nacional del Sur, Argentina.}

\subjclass[2000]{Primary:
16G20, 
Secondary: 16G70, 
18E30 
}
\maketitle

\section{Introduction and Results}

Cluster algebras were conceived around 2000 by Fomin and Zelevinsky,
see \cite{FZ1}, where they axiomatized a kind of combinatorics which
was rapidly recognized to have been present before in different areas.
Such a connection was established in the seminal paper \cite{BMRRT} to
the representation theory of finite-dimensional algebras, where the
authors introduced the concept of \emph{cluster category} $\CluCat$,
defined as orbit category of the bounded derived category $\Der(H)$ of
a finite-dimensional hereditary algebra $H$ over a field $k$.  They
established the connection in the special case when $k$ is
algebraically closed and $H$ is of finite representation type, that
is, the quiver of $H$ is the disjoint union of Dynkin diagrams. It is
remarkable that in the setting of cluster algebras the concept of
finite type also exists naturally and that it is given by the
Cartan-Killing classification, see \cite{FZ2}. The connection between
cluster algebras and cluster categories was deepened by various
authors and expanded over the original limit of finite type to
hereditary finite-dimensional algebras (over an algebraically closed
field) in general, see for example \cite{CK2}, \cite{BMRT}.

We assume throughout the whole article that the base field $k$ is
algebraically closed. The connection established thus far shows that
to each hereditary algebra $H$, a cluster algebra $\mathcal{A}$ can be
associated in such a way that its cluster variables (resp. clusters)
correspond precisely to the indecomposable rigid objects, that is,
objects $T$ with $\Hom_{\CluCat}(T,T[1])=0$ where $[1]$ is the shift
induced by the shift in $\Der(H)$ (respectively \emph{cluster-tilting
  objects}, see Section \ref{sec:clucat}) of the cluster category
$\CluCat$.  This turned the attention to \emph{cluster-tilted
  algebras}, that is, endomorphism algebras of cluster-tilting objects
of $\CluCat$, see \cite{BMR2,BMR3}. Buan, Marsh and Reiten showed in
\cite{BMR3} that the quivers of the cluster-tilted algebras arising
from a given cluster category are exactly the quivers corresponding to
the exchange matrices of the associated cluster algebra. Moreover,
they showed that for each cluster-tilting object $T=T'\oplus T_i$ with
indecomposable summands $T_i$ there exists precisely one
indecomposable object $T_i'\not\simeq T_i$ such that $T'\oplus T_i'$
is again a cluster-tilting object and that this procedure corresponds
in natural way to the mutation of the associated seeds.

In \cite{ABS} the authors studied the relationship between tilted
algebras $\End_H(M)$ for tilting $H$-modules $M$, and cluster-tilted
algebras $\End_{\CluCat}(T)$ for cluster-tilting objects $T$ in
$\CluCat$. For this they introduced the concept of \emph{relation
  extension} of an algebra $B$ with $\gldim B\leq 2$ and defined it to
be the algebra $\ealg(B)=B\semidirprod\Ext_B^2(\dual B,B)$, where
$\dual B$ is the dual of $B$, that is, the injective cogenerator
$\Hom_k(B,k)$ of the module category $\mod B$. They proved that an
algebra $C$ is a cluster-tilted algebra if and only if it is the
relation-extension of some tilted algebra $B$.  This result has an
analogy with a well known theorem about the relation between trivial
extensions $T(A) = A \semidirprod D(A)$ of artin algebras A and tilted
algebras, due to Hughes and Waschb\"ush \cite{HW}. They prove
that $T(A)$ is of finite representation type if and only if there
exists a tilted algebra $B$ of Dynkin type such that $ T(A) \simeq
T(B)$.  This connection was extended to iterated tilted algebras by
Assem, Happel and Rold\'an \cite{AHR}, who proved that a trivial
extension $T(A)$ is of finite representation type if and only if $A$
is an iterated tilted algebra of Dynkin type.  Keeping these results
in mind, we want to further extend the mentioned connection between
cluster tilted algebras and tilted algebras to iterated tilted
algebras. It turns out that it is possible to do so, but one needs to
restrict to iterated tilted algebras of global dimension at most
two. The following is one of our main results.

\begin{thm}
  \label{thm:split}
  If $B$ is an iterated tilted algebra of $\gldim B\leq 2$ then there
  exists a cluster-tilted algebra $C$ which is a split extension of
  $B$. More precisely, if $B=\EndD(T)$ with $H$ a hereditary algebra
  and $T$ is a tilting complex in $\Der(H)$ then
  $C=\End_{\CluCat(H)}(T)$ is a cluster-tilted algebra and there
  exists a sequence of algebra homomorphisms
  $$
  B\ra C\xrightarrow{\pi}\ealg(B)\ra B
  $$
  whose composition is the identity map. Moreover, the kernel of $\pi$
  is contained in $\rad^2 C$. In particular $C$ and $\ealg(B)$ have
  the same quivers and are both split extensions of $B$.
\end{thm}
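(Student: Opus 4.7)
The strategy is to exploit the derived equivalence given by the tilting complex $T$ to transport the computation of $\End_{\CluCat(H)}(T)$ into $\Der(B)$, where the relation extension structure can be recognised via Serre duality.

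\emph{Step 1 (setup).} Since $T$ is a tilting complex in $\Der(H)$, the functor $\HomD(T,-)$ yields a triangle equivalence $\Der(H)\simeq\Der(B)$ sending $T$ to $B$. The cluster category $\CluCat(H)=\Der(H)/F$ with $F=\tau^{-1}[1]$ is formed from $\Der(H)$ by the orbit category construction, and a tilting complex in $\Der(H)$ descends to a cluster-tilting object in $\CluCat(H)$, so $C=\End_{\CluCat(H)}(T)$ is cluster-tilted. The canonical projection functor $\Der(H)\to\CluCat(H)$ induces the first algebra homomorphism $B=\EndD(T)\to\End_{\CluCat(H)}(T)=C$.

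\emph{Step 2 (the key identification).} From the orbit category definition one has the $k$-vector space decomposition
$$\HomC(T,T)=\bigoplus_{n\in\ZZ}\HomD(T,F^nT),$$
and the composition law makes $C$ a graded algebra whose degree-zero part is $B$. The crucial step is to identify the degree-one piece. Since $H$ is hereditary its Serre functor is $\nu_H=\tau[1]$, so $F=\nu_H^{-1}[2]$; Serre functors are preserved under triangle equivalences, hence $F$ transports to $\nu_B^{-1}[2]$ on $\Der(B)$. Consequently
$$E:=\HomD(T,FT)\cong\Ext^2_B(\dual B,B)$$
as $B$-$B$-bimodules, which matches precisely the extension module of $\ealg(B)=B\semidirprod\Ext^2_B(\dual B,B)$.

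\emph{Step 3 (construction of $\pi$).} Define $\pi\colon C\to\ealg(B)$ as the identity on the $B$- and $E$-summands (in degree $0$ and $1$ respectively) and as zero on all higher-degree components. Because the product of two elements of degree $\geq 1$ in $C$ lies in degree $\geq 2$, while the corresponding product in $\ealg(B)$ is zero (the multiplication on $E$ in the relation extension is trivial), $\pi$ is a ring homomorphism. The composition $B\hookrightarrow C\xrightarrow{\pi}\ealg(B)\to B$ is then the identity by construction. The positive-degree part $N=\bigoplus_{n\geq 1}\HomD(T,F^nT)$ is a two-sided ideal of $C$ with $C/N\cong B$, so $N\subseteq\rad C$; the containment $\ker\pi\subseteq\rad^2C$ then reduces to showing that each higher-degree component $\HomD(T,F^nT)$ with $n\geq 2$ is generated, as a bimodule, by products of lower-degree elements, placing it inside $N^2\subseteq\rad^2C$. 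Once this is established, $\pi$ induces an isomorphism $C/\rad^2C\cong\ealg(B)/\rad^2\ealg(B)$, which gives the coincidence of quivers.

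\emph{Main obstacle.} The most technical step is the $B$-bimodule identification $E\cong\Ext^2_B(\dual B,B)$: the underlying vector space isomorphism follows from Serre duality and the derived equivalence, but the compatibility of the left and right actions of $B=\EndD(T)$ on both sides requires a careful naturality argument. A secondary difficulty is verifying that the higher-degree components $\HomD(T,F^nT)$ for $n\geq 2$ arise from products of degree-one elements, which is not automatic for tilting complexes whose indecomposable summands are spread across several shifts of $\Der(H)$, and is what is needed in order to conclude $\ker\pi\subseteq\rad^2C$.
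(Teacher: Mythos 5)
Your overall setup --- the $\ZZ$-graded decomposition $\EndC(T)=\bigoplus_{i\in\ZZ}\HomD(T,F^iT)$, the identification of the degree-one piece with $\Ext^2_B(\dual B,B)$, and the truncation map $\pi$ --- is exactly the paper's starting point (Lemma \ref{lem:seq}). But the two statements you treat as routine are precisely where the content of the theorem lies, and your arguments for them do not work. First, the assertion in Step 1 that a tilting complex ``descends to a cluster-tilting object in $\CluCat(H)$'' is not automatic: rigidity in the orbit category means $\bigoplus_{i\in\ZZ}\HomD(T,F^iT[1])=0$, and the summands with $i\neq 0$ are controlled by $\Ext^{\geq 3}_B(\dual B,B)$ and by how the indecomposable summands of $T$ are spread over several shifts; it genuinely uses $\gldim B\leq 2$ and still requires proof. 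In the paper this is the corollary following Theorem \ref{thm:rolling-to-tilted}, obtained via the rolling construction (and independently by Amiot and Iyama); your plan contains no argument for it.

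Second, and more seriously, your justification that $\pi$ is a ring homomorphism only inspects products of two elements of degree $\geq 1$. The failure mode is different: a composition $T\to F^jT\to T\oplus FT$ with $j\neq 0,1$ lands in degree $0$ or $1$ (so survives $\pi$) while one factor is killed by $\pi$, and then $\pi(gf)\neq\pi(g)\pi(f)$ unless $gf=0$. The components of degrees $-1,-2$ do vanish when $\gldim B\leq 2$ (Lemma \ref{lem:basic}), but components of degree $\geq 2$ or $\leq -3$ need not, exactly because $T$ is a complex and not a module; the paper explicitly warns that $\pi(B)$ is in general not an algebra homomorphism. The paper establishes multiplicativity, the containment $\Ker\pi\subseteq\rad^2 C$ (which you also leave open, and note that $\Ker\pi$ contains negative-degree components not even lying in your ideal $N=\bigoplus_{n\geq 1}\HomD(T,F^nT)$), and the cluster-tilting statement all by one mechanism missing from your plan: rolling $T$ until $\rho^h(B)$ is tilted (Theorem \ref{thm:rolling-to-tilted}), invoking the Assem--Br\"ustle--Schiffler isomorphism $C\simeq\ealg(\rho^h(B))$ in the tilted case, and transporting these properties back to $B$ step by step via Proposition \ref{prop:ind-ealg}. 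Without a substitute for this induction (or a direct vanishing argument for the offending compositions), Steps 1 and 3 are genuine gaps; by contrast, the bimodule naturality of $\HomD(T,FT)\simeq\Ext^2_B(\dual B,B)$, which you single out as the main obstacle, is the routine part.
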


The last assertion, relating the quivers of $C$ and $\ealg(B)$, was
also proven independently by Amiot in \cite [4.17]{A} with different
thecniques.

To achieve the result we introduce a mechanism of obtaining a new
iterated tilted algebra $\rho(B)$ with $\gldim \rho(B)\leq 2$, from a
given one $B$ with $\gldim B\leq 2$. We shall call the new algebra
$\rho(B)$ the \emph{rolling} of $B$. The key result in our proof is
the following.
 
\begin{thm}\label{thm:rolling-to-tilted}
  Let $B$ be an iterated tilted algebra of type $Q$ with $\gldim B\leq
  2$ then for sufficiently large $h$ the algebra $\rho^h(B)$ is tilted
  of type $Q$.
\end{thm}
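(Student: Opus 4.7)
The plan is to write $B=\EndD(T)$ where $T=\bigoplus_{i=1}^n T_i$ is a basic tilting complex in $\Der(H)$ for some hereditary algebra $H$ whose quiver is $Q$, with indecomposable summands $T_i$ concentrated in integer shift degrees $d_i\in\ZZ$. The point is that $B$ is honestly tilted of type $Q$ (not merely iterated tilted) exactly when all the $d_i$ coincide, since then $T[-d_1]$ is a tilting $H$-module and $B=\End_H(T[-d_1])$. So the theorem reduces to showing that iterating $\rho$ flattens the multiset $\{d_i\}$ to a single value.

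I would introduce a nonnegative integer invariant $\nu(T)$ measuring how far $T$ is from being concentrated in one degree; a natural choice is $\nu(T)=\sum_i (d_i-d_{\min})$, or equivalently the number of shift boundaries between the summands weighted by how many lie above them. The strategy is then to show two things about the rolling construction $\rho$ from the previous section: (i) for any $B$ with $\gldim B\leq 2$, the algebra $\rho(B)$ admits a presentation of the form $\EndD(\rho(T))$ for a tilting complex $\rho(T)$ obtained from $T$ by shifting the summands with $d_i=d_{\min}$ (or, dually, those with $d_i=d_{\max}$), and (ii) this operation strictly decreases $\nu$ whenever $\nu(T)>0$. Since $\nu$ is a nonnegative integer, there exists $h$ with $\nu(\rho^h(T))=0$; at that stage $\rho^h(B)$ is the endomorphism algebra of a tilting module over $H$, hence tilted of type $Q$.

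The heart of the argument, and the main obstacle, is step (ii): verifying that the shifts performed by $\rho$ actually reduce $\nu$ while keeping $\rho(T)$ a tilting complex with global dimension at most $2$. This is where the hypothesis $\gldim B\leq 2$ must be used in an essential way. The condition forces the multiset $\{d_i\}$ to occupy only two or three consecutive integers after suitable normalisation, and forces the morphisms between summands of different degrees in $\Der(H)$ to be compatible with a single rolling step. Concretely, I would analyse the indecomposable summands $T_i$ with $d_i=d_{\min}$, check that replacing each by $T_i[1]$ leaves $\Hom_{\Der(H)}(\rho(T),\rho(T)[j])=0$ for $j\neq 0$ by using the original vanishing for $T$ together with the hereditarity of $H$, and then use the gldim $\leq 2$ hypothesis to control the new projective resolutions and confirm $\gldim \rho(B)\leq 2$. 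Once these compatibilities are in place, a direct count shows that $\nu(\rho(T))=\nu(T)-m$ where $m$ is the number of summands in the minimal degree, so the induction terminates.
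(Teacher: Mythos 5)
There is a genuine gap, and it sits exactly where the content of the theorem lies. First, step (i) of your plan misdescribes the operation $\rho$ that the statement refers to. The paper's rolling does not shift the summands of minimal (or maximal) cohomological degree: in the Dynkin case it applies $F^{-1}=\tau\circ[-1]$ (not the shift functor) to those summands of $T$ that lie on the canonical section $\Sigma(T)$, i.e.\ the summands that are maximal for the partial order of the transjective component $\ZZ Q$, and in the non-Dynkin case to the summands lying in the top part $\Rr[m]$. Membership in $\Sigma(T)$ is not determined by the degree $d_i$; in Example \ref{D8}, the first rolling moves only $T_8$ even though several other summands sit in the same degree. So an argument built on ``$\rho$ shifts the minimal-degree summands'' is not an argument about the $\rho$ in the statement.

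Second, and more seriously, your termination criterion is wrong: iterating the paper's $\rho$ need never produce a complex whose summands are all concentrated in a single degree of $\Der(H)$, so your invariant $\nu$ need not reach $0$. What the iteration does achieve is that $\rho^h(T)$ eventually lies in $\mod H'[0]$ for a hereditary algebra $H'$ attached to a section (Definition \ref{def:h-sigma}); $H'$ has the same underlying graph as $Q$ but in general a different orientation than $H$, and it is this containment --- not concentration in one degree over the original $H$ --- that yields ``tilted of type $Q$''. Correspondingly, the decreasing quantity has to measure distance to a section (the paper's $n_h(\Sigma^{(h)})$ of Lemma \ref{lem:dist}), not the spread of degrees. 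Example \ref{D8} again makes the point: the rolled algebras satisfy $B_h\simeq B_{h+3}$ and the tilted stage $B_3$ comes from a complex whose summands straddle more than one degree of $\Der(H)$, so $\nu$ stays positive forever while the theorem's conclusion already holds. (In addition, your claim that $\gldim B\leq 2$ confines the degrees to two or three consecutive integers is unsubstantiated and plays no role in the paper, and the compatibilities you defer --- that $\rho(T)$ remains a tilting complex and that $\gldim\leq 2$ is preserved --- are Propositions \ref{prop:rho-T-tilting} and \ref{prop:gldim}, which are prerequisites rather than the content of this theorem; the actual termination argument, which is what needs proving here, is the part your sketch gets wrong.)
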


We then focus on the finite type, where much more precise information
is available on the combinatorial structure of the quiver and
relations of a cluster-tilted algebra, see \cite{BMR2}. To do this
we need the notion of admissible cut of a quiver $Q$, introduced in
\cite{F} (see also \cite{FP}), and define it to be a subset $\Delta$
of the arrows such that each oriented chordless cycle of $Q$ contains
precisely one element of $\Delta$. Then for an algebra $B$, given as
the quotient of a path algebra $\field Q_B$ by an admissible ideal
$I_B$, we define the \emph{quotient} of $B$ \emph{by an admissible
  cut} $\Delta$ to be $\field Q_B/\langle I_B\cup \Delta\rangle$.
 
The following shows that the relationship between cluster-tilted
algebras and iterated tilted algebras of the same type is strong and
combinatorial.

\begin{thm}\label{thm:iff-cut}
  An algebra $B$ with $\gldim B\leq 2$ is iterated tilted of Dynkin type $Q$
  if and only if it is the quotient of a cluster-tilted algebra of type $Q$
  by an admissible cut.
\end{thm}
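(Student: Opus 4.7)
For the $(\Rightarrow)$ direction, apply Theorem~\ref{thm:split} to an iterated tilted algebra $B$ of Dynkin type $Q$ with $\gldim B\leq 2$. This produces a cluster-tilted algebra $C$ of type $Q$ together with a composition $B\to C\xrightarrow{\pi}\ealg(B)\to B$ equal to the identity, where $Q_C = Q_{\ealg(B)}$ and $\ker\pi\subseteq\rad^2 C$. Take $\Delta$ to be the set of arrows of $Q_C$ that do not belong to $Q_B$; these correspond to generators of $\Ext^2_B(DB,B)$ as a $B$-bimodule. Since the split surjection $C\twoheadrightarrow B$ kills precisely $\Delta$ modulo $\rad^2$ and absorbs $\ker\pi$ (which lies in $\rad^2 C$), one obtains $B\cong\field Q_C/\langle I_C\cup\Delta\rangle$. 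To show $\Delta$ is admissible, invoke the description of Dynkin cluster-tilted algebras from \cite{BMR2}: each oriented chordless cycle of $Q_C$ is formed by a minimal zero-relation path of $B$ together with exactly one closing arrow from $\Delta$, and this establishes a bijection between chordless cycles and arrows of $\Delta$.

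For the $(\Leftarrow)$ direction, suppose $B=\field Q_C/\langle I_C\cup\Delta\rangle$ with $C$ cluster-tilted of Dynkin type $Q$ and $\Delta$ an admissible cut. By the Assem--Br\"ustle--Schiffler theorem we may write $C=\ealg(B_0)$ for a tilted algebra $B_0$ of type $Q$, and the canonical cut consisting of the arrows added in passing from $B_0$ to $\ealg(B_0)$ is admissible and recovers $B_0$. For an arbitrary admissible $\Delta$, I would first verify $\gldim B\leq 2$: admissibility means cutting one arrow from each chordless cycle, which destroys every minimal zero-relation of $C$ while leaving only the (length-$2$ commutativity) relations, and a direct computation bounds projective dimensions by $2$. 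Next, since cutting and re-extending respects the chordless-cycle structure, $\ealg(B)$ has the same quiver $Q_C$ as the cluster-tilted algebra $\End_{\CluCat(H)}(T)$ for $H$ hereditary of type $Q$, so $B$ satisfies the hypotheses of Theorem~\ref{thm:rolling-to-tilted}. Applying a sufficiently large power of the rolling operation yields a tilted algebra of type $Q$, whence $B$ is itself iterated tilted of type $Q$.

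The main obstacle is the admissibility verification in the forward direction. One must show that the map "new arrow of $\ealg(B)\leftrightarrow$ chordless cycle of $Q_{\ealg(B)}$" is a bijection, and this depends crucially on the Dynkin hypothesis: in non-Dynkin type, cluster-tilted quivers may have chordless cycles not corresponding to single relations, and relations in $\ealg(B)$ can involve several new arrows simultaneously. The secondary difficulty is the reverse direction's need to handle \emph{all} admissible cuts rather than only the canonical one given by \cite{ABS}; this is where the rolling mechanism of Theorem~\ref{thm:rolling-to-tilted} is essential, since a generic admissible cut need not produce a tilted algebra but only an iterated tilted one, and rolling provides the inductive control required to land on a genuine tilted algebra after finitely many steps.
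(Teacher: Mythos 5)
There is a genuine gap, and it lies in your $(\Leftarrow)$ direction, which is circular. Theorem~\ref{thm:rolling-to-tilted} (and indeed the very definition of the rolling $\rho(B)$, which requires a tilting complex $T$ in $\Der(H)$ with $B=\EndD(T)$, as well as Theorem~\ref{thm:split}, which you invoke to identify the quiver of $\ealg(B)$ with $Q_C$) all take as \emph{hypothesis} that $B$ is iterated tilted with $\gldim B\leq 2$ --- which is exactly the conclusion you are trying to reach. Moreover, even if one knew that $\ealg(B)$ has the same quiver as a cluster-tilted algebra, that alone does not force $B$ to be iterated tilted: Remark~\ref{rem:B-is-cut-of-R(B)} of the paper exhibits an algebra $B$ whose relation extension has the quiver of a cluster-tilted algebra while $B$ is not iterated tilted. (Also note that $\gldim B\leq 2$ is a hypothesis of the theorem, so your proposed ``direct computation'' of projective dimensions is both unsubstantiated and unnecessary.) The paper's actual argument for this direction (Theorem~\ref{thm:main-cut}) is of a completely different nature: one shows that an admissible cut $B$ of a Dynkin cluster-tilted algebra is strongly simply connected (Lemma~\ref{lem:ssc}, via the chordless-cycle combinatorics and the universal-cover criterion) and that its Tits form $q_B$ is positive definite (Proposition~\ref{prop:qB-positive}, using quasi-Cartan companions and \cite{BGZ}), and then applies the Assem--Skowro\'nski criterion \cite{AS1} for iterated tilted algebras of Dynkin type. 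None of this machinery appears in your proposal, and without it the backward implication is unproven.

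Your $(\Rightarrow)$ direction follows the paper's strategy (Theorem~\ref{thm:split} plus taking $\Delta$ to be the new arrows), but the two points that carry the actual content are asserted rather than proved. First, the claim that every chordless cycle of $Q_C$ contains \emph{exactly one} arrow of $\Delta$ is precisely Proposition~\ref{prop:cut-exists}; excluding a chordless cycle with two or more new arrows requires the relation description of Theorem~\ref{thm:Dynkin-rel} combined with the fact $I_B\subseteq I_C$, which comes from $C$ being a split extension of $B$ (via \cite{ACT}), and the contradiction is reached through the $G(a,b)$ configuration --- it is not true in general that a chordless cycle is ``a minimal zero relation plus one closing arrow'' (the antiparallel path may be one branch of a commutativity relation). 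Second, the identification $B\simeq \field Q_C/\gen{I_C\cup\Delta}$, i.e.\ that the kernel of $C\to B$ is generated by $\Delta$ together with $I_C$ and imposes no further relations, is exactly what Remark~\ref{rem:B-is-cut-of-R(B)-bis}(b) establishes, again using the split-extension property; Remark~\ref{rem:B-is-cut-of-R(B)-bis}(c) shows this step genuinely fails without it. So the forward direction is repairable along the lines you sketch, but the backward direction needs to be replaced by the quadratic-form/simple-connectedness argument.
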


Moreover, we characterize the iterated tilted algebras $B$ with
$\gldim B\leq 2$ for which the relation extension $\ealg(B)$ is
isomorphic to the corresponding cluster-tilted algebra $C(B)$,
see Proposition \ref{prop:char-r=c}.

Results along these lines were proven in \cite{F} and \cite{FP} for
admissible cuts of trivial extensions.  In her PhD thesis
E. Fern\'andez showed that they are a very useful tool in the study of
classification problems. In this way, she classified all trivial
extensions of finite representation type, and gave a method to get all
iterated tilted algebras of Dynkin type obtaining, under a unified
approach, results proven with diverse techniques by other authors.
Though in a different context, we consider that the results in this
paper can be applied in a similar way to obtain analogous
classification results for cluster tilted algebras and also provide a
new insight on tilted an iterated tilted algebras to study their
quivers and relations.

\section{Basic definitions and notations}

\subsection{Quivers and path algebras}
\label{subsec:quivers}
A \emph{quiver} is a directed graph, that is, a quadruple
$Q=(Q_0,Q_1,s,t)$, where $Q_0$ is the set of vertices, $Q_1$ the set
of arrows and $s,t\colon Q_1\ra Q_0$ are the maps which assign to each arrow
$\alpha$ its \emph{source} $s(\alpha)$ and its \emph{target}
$t(\alpha)$. We usually write $\alpha\colon s(\alpha)\ra t(\alpha)$ to
express this.

A subquiver $Q'$ of a quiver $Q$ is called a \emph{chordless} (or
\emph{minimal}) \emph{cycle} if $Q'$ is full, connected and in every
vertex of $Q'$ exactly two arrows of $Q'$ incide (starting or stopping
there).  In case exactly one arrow stops and the other starts the
cycle is called \emph{oriented}.

A \emph{path} is a tuple
$\gamma=(y|\alpha_r,\alpha_{r-1},\ldots,\alpha_1|x)$
of vertices $x,y\in Q_0$ and arrows $\alpha_1,\ldots,\alpha_r\in Q_1$
with $x=y$ if $r=0$ and $s(\alpha_1)=x$, $t(\alpha_r)=y$,
$t(\alpha_i)=s(\alpha_{i+1})$ for $i=1,\ldots,r-1$ if $r>0$.
The number $r$ is called the \emph{length} of $\gamma$ and the
functions $t,s$ are naturally extended by setting $s(\gamma)=x$ and
$t(\gamma)=y$. We usually abbreviate
$(y|\alpha_r,\alpha_{r-1},\ldots,\alpha_1|x)$ by $\alpha_r\alpha_{r-1}\cdots\alpha_1$ and $(x||x)$ by $\trivpath{x}$.

For a field $\field$ and a quiver $Q$, let $\field Q$ be the
\emph{path algebra} of $Q$: the underlying $\field$-vector space has
the set of all paths as basis and the multiplication is induced
linearly by the concatenation of paths, that is, if
$\delta=\beta_s\cdots\beta_1$ and $\gamma=\alpha_r\cdots\alpha_1$ then
$\delta\gamma$ is defined as
$$
\delta\gamma=\beta_s\cdots\beta_1\alpha_r\cdots\alpha_1
$$
if $s(\beta_1)=t(\alpha_r)$ and $\delta\gamma=0$ otherwise.
The ideal of $\field Q$ generated by all paths of positive length is
called \emph{radical} and will be denoted by $\rad \field Q$.

If the field $\field$ is algebraically closed, then each
finite-dimensional algebra $A$ is Morita-equivalent to the quotient of
a path-algebra by an \emph{admissible} ideal $I$, that is, $I$ is
contained in $\rad^2 \field Q$ and the quotient $\field Q/I$ is
finite-dimensional. If, moreover, $A$ is basic then $A\simeq kQ/I$,
and the pair $(Q,I)$ is called a \emph{presentation} for $A$.
If $Q$, $Q'$ are two quivers and $I\subset k Q$, $I'\subset k Q'$
  two ideals then we call $(Q',I')$ an \emph{extension of} $(Q,I)$ if
  $Q_0\subseteq Q'_0$, $Q_1\subseteq Q'_1$ and $I\subseteq I'$.
 
\subsection{Split extensions}

We say that the algebra $A$ is a \emph{split extension}
of the algebra $B$ by the ideal $M$ of $A$
if there exists a split surjective algebra morphism $\pi\colon A
\longrightarrow B$ whose kernel $M$ is a nilpotent ideal. This
means that there exists a short exact sequence of $\field$-vector spaces
$$ 
0 \longrightarrow M \stackrel{l} \longrightarrow A
\stackrel{\pi}\longrightarrow B \longrightarrow O
$$ 
such that there exists an algebra morphism
$\sigma\colon B \longrightarrow A$ with $\pi \sigma = 1_B$. In
particular $\sigma$ identifies $B$ with a subalgebra of $A$. 
Note that $M \subseteq\rad A$ since $M$ is a nilpotent ideal.

Let $B$ be a finite dimensional algebra  and
consider a $B$-$B$-bimodule $M$. The \emph{trivial extension} $B
\semidirprod M$ is the algebra whose underlying $\field$-vector space is $B
\times M$ with multiplication $(b,m)\cdot(b',m')=(bb',b m'+m b')$. When $\gldim B\leq 2$, the
trivial extension $\ealg(B)=B\semidirprod \Ext^2_B(DB,B)$ is called
the \emph{relation extension} of $B$, see \cite{ABS}.

\subsection{Quadratic forms}
\label{sec:qf}

For an algebra of finite global dimension $B$, we denote by $\mod B$
the category of finitely generated (or equivalently
finite-dimensional) left $B$-modules. Furthermore, we denote by
$\Groth{B}$ the associated \emph{Grothendieck group}, that is, the
free abelian group on the isomorphism classes of objects of $\mod B$
modulo the subgroup generated by $\{E-X-Y\mid 0\ra X\ra E\ra Y\ra
0\text{ is exact }\}$. The class of a $B$-module $X$ shall be denoted
by $[X]$. Notice that $\Groth{B}\simeq \ZZ^n$ where $n$ is the number
of isomorphism classes of simple $B$-modules. We denote by
$\chi_B\colon \Groth{B}\rightarrow \ZZ$ the \emph{homological form}
(or \emph{Euler form}) of $B$, that is, $\chi_B$ is the quadratic form
associated to the bilinear form defined by
$$
([X],[Y])=\sum_{i=0}^{\infty}\dim\Ext_B^i(X,Y)
$$
for $X,Y \in$ mod$B$.
    
We denote by $q_B$ the \emph{geometrical form} (or \emph{Tits form}),
defined by the ``truncated'' bilinear form defined for the classes of
the simple modules $S_i$ by
$$
\langle[S_h],[S_j]\rangle=\sum_{i=0}^{2}\dim\Ext_B^i(S_h,S_j).
$$

\begin{remark} \label{rem:q=chi}
  If $\gldim B\leq 2$ then $\chi_B=q_B$.
\end{remark}

\subsection{Algebras which are simply connected}
\label{sec:sc}
An algebra $A$ with connected quiver $Q$ with no oriented cycles is
called \emph{simply connected} if for each presentation $(Q,I)$ of $A$
the fundamental group $\pi(Q,I)$ is trivial, for precise
  definitions we refer to \cite{BG} and \cite{Sko}.

A full subquiver $Q'$ of $Q$ is called \emph{convex} if for any two
paths $\gamma$, $\delta$ with $t(\gamma)=s(\delta)$ and $s(\gamma),
t(\delta)\in Q'_0$ then $t(\gamma)\in Q'_0$.  An algebra $A = \field
Q/I$ is called \emph{strongly simply connected} if for every full and
convex subquiver $Q'$ of $Q$ the induced algebra $\field Q'/(\field
Q'\cap I)$ is simply connected.

\begin{remark}
\label{rem:sc=ssc}
By \cite[Def. 2.2]{Sko} and \cite[2.9]{BG}, if $A$ is of finite representation type then $A$ is simply connected if and only if it is strongly simply connected.
\end{remark}

\subsection{Tilted and iterated tilted algebras}

Let $A$ be a finite-dimensional $k$-algebra. 
We recall that a module $M\in\mod A$ is called \emph{tilting
  module} if $M$ has projective dimension at most one, $\Ext_A^1(M,M)=0$ and
the decomposition of $M$ into indecomposables contains precisely $n$
pairwise non-isomorphic summands, where $n$ is the number of pairwise
non-isomorphic simple $A$-modules, or equivalently the number of
vertices of the quiver of $A$.

If $H$ is a hereditary algebra and $M$ a tilting $H$-module then
$\End^{\op}_H(M)$ is called a \emph{tilted algebra}. Since the
opposite of a tilted algebra is again a tilted algebra we often prefer
to look at the endomorphism algebras themselves instead of their
opposites. An algebra $B$ is called an \emph{iterated tilted algebra
  of type $Q$} if there exists a sequence of algebras
$A_1,A_2,\ldots,A_t$ such that $A_1$ is hereditary with quiver $Q$,
$A_t=B$ and for each $i=1,\ldots,t-1$ we have
$A_{i+1}\simeq \End_{A_i}(M_i)$ for some tilting $A_i$-module $M_i$ or
$A_i\simeq \End_{A_{i+1}}(N_i)$ for some tilting $A_{i+1}$-module
$N_i$.
\subsection{Structure of the derived category over a hereditary algebra}
\label{sec:der-struc}
Throughout the rest of the article $H$ denotes a finite-dimensional
hereditary algebra over an algebraically closed field $k$. We denote by $\Der(H)$ the bounded derived
category of finitely generated $H$-modules, see \cite{Ha} for
generalities on derived categories. Since $H$ is hereditary, each
indecomposable object of $\Der(H)$ is isomorphic to a complex
concentrated in one degree. We shall identify the objects in $\mod H$
with the complexes concentrated in degree zero.

Recall that in $\Der(H)$ \emph{Serre duality} holds, that is, for any
objects $X$ and $Y$ of $\Der(H)$, we have
$$
\HomD(X,\tau Y)=\dual\Hom(Y,X[1]),
$$
where $\tau$ denotes the Auslander-Reiten translation and
$[1]$ the suspension in $\Der(H)$. The autoequivalence
$F=\tau^{-1}\circ [1]$ will play a crucial role in the rest of the
paper. 

If the quiver $Q$ of $H$ is Dynkin then the Auslander-Reiten quiver
$\Gamma$ of $\Der(H)$ consists of a single transjective component
isomorphic to the translation quiver $\ZZ Q$, see \cite[Ch.1,
Cor. 5.6]{Ha}. In particular, the arrows induce a partial order in the
vertices of $\Gamma$, that is, if $L\ra M$ is an arrow in $\Gamma$
then we write $L<M$. Moreover if there exists a path from $L$ to $M$
then all paths have the same length $d(L,M)$ and we set $d(L,M)=0$ if
there is no path at all.

In case $Q$ is Dynkin, a set of representatives
$\Sigma_1,\ldots,\Sigma_n$ of the $\tau$-orbits of $\Gamma$ is called
\emph{section} if $\Sigma_1,\ldots,\Sigma_n$ induce a connected subquiver of
$\Gamma$. Here $n$ is the the number of vertices in the quiver $Q$.

If the quiver $Q$ of $H$ is not Dynkin then the structure of the
Auslander-Reiten quiver $\Gamma$ of $\Der(H)$ is completely different.
Denote by $\Pp$, (resp.  $\Ii$) the preprojective (resp. preinjective)
component of the Auslander-Reiten quiver of $H$ and by $\Rr$ the full
subcategory of $\mod H$ given by the regular components. For each
$r\in \ZZ$ the regular part $\Rr$ gives rise to $\Rr[r]$, given by the
complexes $X\in \Der(H)$ concentrated in degree $r$ with
$X_r\in\Rr$. Moreover, for each $r\in\ZZ$ there is a transjective
component $\Ii[r-1]\vee\Pp[r]$ of $\Gamma$ which we shall denote by
$\Rr[r-\frac{1}{2}]$, and each component of $\Gamma$ is contained in
$\Rr[r]$ for some half-integer $r$.  The notation has the advantage
that the different parts are ordered in the sense that
$\Hom(\Rr[a],\Rr[b])=0$ for any two half-integers $a>b$. Also note
that $\Hom(\Rr[a],\Rr[b])=0$ if $a<b-1$.

\subsection{Tilting complexes}
\label{sec:tilt-cmplx}
An object $T$ of $\Der(H)$ is called \emph{tilting complex} if
$\Hom(T,T[i])=0$ for each $i\neq 0$ and if the only object $X$ for
which $\Hom(T,X[i])=0$ for all $i$ is the zero object. It follows from
\cite[Cor. 3.3 and Lemma 3.5]{RvB} that $T$ is a tilting complex if
and only if $\HomD(T,T[i])=0$ for all $i\neq 0$ and $T$ has exactly
$n$ non-isomorphic indecomposable summands, where $n$ is the number of
simple $H$-modules (up to isomorphism).

Note that by \cite[Cor. 5.5 of Chap. 4]{Ha} and \cite{Ric}, 
an algebra $A$ is iterated tilted of type
$Q$ if and only if $A$ is isomorphic to the endomorphism algebra of a
tilting complex $T$ in $\Der(\field Q)$ (equivalently if and only if
there exists an equivalence of triangulated categories $\Der(A)\simeq
\Der(\field Q)$).

\subsection{The cluster category}
\label{sec:clucat}
Let $H$ be a hereditary algebra. Then the orbit category
$\CluCat=\Der(H)/F^\ZZ$ is called \emph{cluster category of $H$}, see
\cite{BMRRT}. By construction the objects of $\CluCat$ are the objects
of $\Der(H)$ and the morphism spaces are given by
$$
\HomC(X,Y)=\bigoplus_{i\in \ZZ}\HomD(X,F^i Y)
$$
with the natural composition, see \cite{Ke}, where it is also shown
that $\CluCat$ is a triangulated category. 

An object $T$ of $\CluCat$ is a \emph{cluster-tilting object} if
$\Hom(T,T[1])=0$ and if $T$ is decomposed into indecomposables
$T=\bigoplus_{i=1}^n T_i$ then there are precisely $n$ pairwise
non-isomorphic summands, where $n$ is the number of simple
$H$-modules. 

\section{Iterated tilted algebras of global dimension two}
\label{sec:roll}

\subsection{Generalities on tilting complexes} 
\label{sec:tilt-basic}
If $T$ is a tilting complex in $\Der (H)$ (see Section \ref{sec:tilt-cmplx}) and
$B=\EndD(T)$ then we have an equivalence of categories $G\colon
\Der(H)\ra\Der(B)$ derived from $\Hom(T,-)$ such that $G(T)=B$ and
$G(\tau T[1])=\dual B$. For any direct summand $X$ of $T$ we write
$$
\proj_{X,T}=G(X)=\HomD(T,X)\quad\text{ and }\quad
\inj_{X,T}=G(\tau X[1])
$$ Moreover, if $GX$ and $GY$ are $B$-modules, for two objects $X$ and
$Y$ of $\Der(H)$, then $\Ext^i_B(GX,GY)\simeq \HomD(X,Y[i])$ for all
$i\in \ZZ$.

\begin{lem}\label{lem:basic}
  Let $T$ be a tilting complex in $\Der(H)$ such that $\gldim B\leq
  2$, where $B=\EndD(T)$. Then $\Hom_{\Der(H)}(T,F^{-1}T)=0$ and
  $\Hom_{\Der(H)}(T,F^{-2}T)=0$.
\end{lem}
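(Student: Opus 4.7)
The plan is to convert both vanishings into standard $\Ext$-group computations that die either by the tilting-complex property of $T$ or by the bound $\gldim B\leq 2$. First I would unpack $F=\tau^{-1}\circ[1]$, so that $F^{-1}T = \tau T[-1]$ and $F^{-2}T = \tau^{2} T[-2]$, and then apply the Serre duality formula $\HomD(X,\tau Y) = \dual\HomD(Y,X[1])$ recalled in Section \ref{sec:der-struc}.

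For the first assertion I would take $X=T$ and $Y=T[-1]$; a single application of Serre duality gives
\[
\HomD(T,F^{-1}T) = \HomD(T,\tau(T[-1])) \simeq \dual\HomD(T[-1],T[1]) = \dual\HomD(T,T[2]),
\]
which vanishes simply because $T$ is a tilting complex. Note that this half of the lemma does not use the assumption $\gldim B\leq 2$.

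For the second assertion I would factor $\tau^{2}T[-2] = \tau(\tau T[-2])$ and apply Serre duality once more, with $X=T$ and $Y=\tau T[-2]$, to get
\[
\HomD(T,F^{-2}T) \simeq \dual\HomD(\tau T[-2],T[1]) = \dual\HomD(\tau T,T[3]).
\]
At this point I would cross over to $\Der(B)$ through the derived equivalence $G\colon \Der(H)\to\Der(B)$ described in Section \ref{sec:tilt-basic}: from $G(T)=B$ and $G(\tau T[1])=\dual B$ one obtains $G(\tau T)=\dual B[-1]$, hence
\[
\HomD(\tau T,T[3]) \simeq \Hom_{\Der(B)}(\dual B[-1],B[3]) = \Ext_B^{4}(\dual B,B),
\]
and this is zero because $\gldim B\leq 2$.

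The real content is therefore just one application of Serre duality in each case; the only bookkeeping obstacle is to track how $\tau$ commutes with the shifts (both being autoequivalences of $\Der(H)$ since $H$ is hereditary) and to justify $G(\tau T) = \dual B[-1]$ from the triangulated-equivalence property of $G$. Neither step hides any depth, and the hypothesis $\gldim B\leq 2$ enters only at the very last moment to kill $\Ext_B^4(\dual B,B)$.
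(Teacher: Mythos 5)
Your proof is correct and follows essentially the same route as the paper: one application of Serre duality reduces the first vanishing to $\HomD(T,T[2])=0$ (the tilting property alone), and for the second, Serre duality plus the equivalence $G$ identifies the space with $\Ext_B^4(\dual B,B)$, which dies by $\gldim B\leq 2$. The only differences are cosmetic shift bookkeeping, so nothing further is needed.
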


\begin{proof}
  By Serre duality and the fact that $T$ is a tilting complex 
  we have $\HomD(T,F^{-1}T)=\HomD(T[1],\tau T)=\dual\HomD(T,T[2])=0$.
  Also
  $\HomD(T,F^{-2}T)=\HomD(T[3],\tau^2 T[1])=\dual\HomD(\tau
  T[1],T[4])=\Ext_B^4(\dual B,B)=0$ again by Serre duality and $\gldim
  B\leq 2$.
\end{proof}

If $T$ is a tilting complex then we have as in \cite{ABS}
that $\Ext_B^2(\dual B,B)\simeq\HomD(\tau
T[1],T[2])\simeq\HomD(F^{-1}T,T)\simeq\HomD(T,FT)$ with the natural
structure of $B$-$B$-bimodules. 

\subsection{The rolling of tilting complexes}
\label{sec:rolling}

We are now going to define a procedure which is important in the
forthcoming. It defines for each tilting complex a new complex
$\rho(T)$ such that $T\simeq \rho(T)$ in the cluster category
$\CluCat$.  Since the structure of the derived category $\Der(H)$ is
substantially different whether the quiver $Q$ of $H$ is Dynkin or
not, we have to distinguish these two cases in the construction.
 
Let first $Q$ be a Dynkin quiver and $T$ a tilting complex of
$\Der(\field Q)$.  Since $T=\bigoplus_{i=1}^n T_i$ has only finitely
many summands we can easily find a section
$\Sigma=\{\Sigma_1,\ldots,\Sigma_n\}$ such that $T\leq \Sigma$, that
is, $T_i\leq \Sigma_j$ for all $i$ and $j$. If $\Sigma_j$ is maximal
in $\Sigma$ and $\Sigma_j\not\in\{T_1,\ldots,T_n\}$ then
$\Sigma'=\Sigma\setminus\{\Sigma_j\}\cup\{\tau\Sigma_j\}$ is also a
section satisfying $T\leq \Sigma'$. After finitely many steps we get a
section $\Sigma(T)$ such that $T\leq \Sigma(T)$ and all maximal
elements in $\Sigma(T)$ belong to add$T$. Notice that the section
$\Sigma(T)$ is uniquely defined by $T$.
 
\begin{defn}[Rolling of tilting complex, the Dynkin case]
  \label{def:rolling-D}
With the previous notations, let $X$ be the sum of those summands of $T$
which belong to $\Sigma(T)$ and $T'$ a complement of $X$ in $T$. Then
define the \emph{rolling} of $T$ to be $\rho(T)=T'\oplus F^{-1} X$.
\end{defn}

Now consider the case where $Q$ is not Dynkin. Recall from Section
\ref{sec:der-struc} that $\Der(\field Q)$ is composed by the parts
$\Rr[r]$ for $r\in\ZZ/2$ where $\Rr[r]$ denotes the regular
(resp. transjective) part if $r$ is an integer (resp. not an integer).
Now, write $T=\bigoplus_{a\in\ZZ/2}T_{\Rr[a]}$, where
$T_{\Rr[a]}\in\Rr[a]$.

\begin{defn}[Rolling of tilting complex, the non-Dynkin case]
  \label{def:rolling-nD}
With the previous notation let $m$ be the largest half-integer such
that $T_{\Rr[m]}$ is non-zero. Then define $X=T_{\Rr[m]}$ and $T'$ to
be the complement of $X$ in $T$. Define the \emph{rolling} of $T$ to
be $\rho(T)=T'\oplus F^{-1} X$.
\end{defn}

\begin{remark}\label{rem:hom-X-T'}
  If $T=T'\oplus X$ is a tilting complex in $\Der(H)$ and
  $\rho(T)=T'\oplus F^{-1} X$ then we have $\HomD(X,T')=0$.
\end{remark}

\begin{defn}[Rolling of iterated tilted algebras]
\label{rolling of B}
  Let $B$ be an iterated tilted algebra. Then define $\rho(B)$ to be
  the endomorphism algebra $\EndD(\rho(T))$, where $H$ is a hereditary
  algebra with $\Der(B)\simeq \Der(H)$ and $T$ a tilting complex in
  $\Der(H)$ with $B=\EndD(T)$.
\end{defn}

Notice that $\rho(B)$ does not depend on the choice of $H$ or $T$. In
fact, if $T$ and $\hat T$ are tilting complexes in $\Der(H)$ such that
$\End_{\Der(H)}(T) \simeq \End_{\Der(H)}(\hat T)$ then there is an
equivalence of categories $G\colon \Der(H) \ra \Der(H)$ with $G(T)=\hat T$,
and $G$ preserves the partial order in $\Der(H)$. Thus in the Dynkin
case $G(\Sigma(T)) \simeq \Sigma(\hat T)$, and the sum $X$ of the
maximal elements in $\Sigma (T)$ corresponds under $G$ to the sum
$\hat X$ of the maximal elements in $\Sigma(\hat T)$. Thus $\rho(T)$
and $G(\rho(T))\simeq \rho(\hat T)$ have isomorphic endomorphism
rings. The argument in the non-Dynkin case is similar.

\subsection{Characterization when $\rho(T)$ is again a tilting complex}

The following results provide necessary and sufficient conditions for
the rolling $\rho(T)$ to be a tilting complex again.

\begin{lem}\label{lem:crit}
  Let $T = T^{\prime} \oplus X$ be a tilting complex in $\Der(H)$ such
  that $\Hom_{\Der(H)}(X,T') = 0$ and let $B=\End_{\Der(H)} (T)$.
  Then $\overline{T} = T^{\prime} \oplus F^{-1}X$ is a tilting complex
  if and only if $\Hom_{\Der(H)} (F^{-1}X,T'[j])=0$ for all $j\neq 0$
  if and only if $\Ext^j_B(\inj_{X,T},\proj_{T',T}) = 0$ for each
  $j\neq 2$ .
\end{lem}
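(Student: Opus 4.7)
The plan is to decompose the tilting-complex condition on $\overline{T}=T'\oplus F^{-1}X$ into its four $\Hom$ constituents, verify that three of them are forced by the standing hypotheses, and transport the surviving one through the derived equivalence $G\colon\Der(H)\to\Der(B)$ with $G(T)=B$ in order to recognise it as the prescribed Ext condition.

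First I would expand
$$
\Hom_{\Der(H)}(\overline{T},\overline{T}[j]) \cong \Hom(T',T'[j])\oplus\Hom(T',F^{-1}X[j])\oplus\Hom(F^{-1}X,T'[j])\oplus\Hom(F^{-1}X,F^{-1}X[j]).
$$
The $T'$--$T'$ piece vanishes for $j\neq 0$ directly from the tilting property of $T$. The $F^{-1}X$--$F^{-1}X$ piece vanishes similarly for $j\neq 0$ since $F$ is an autoequivalence, which makes it isomorphic to $\Hom(X,X[j])\subseteq\Hom(T,T[j])$. For the mixed piece $\Hom(T',F^{-1}X[j])$, I would use the identity $F^{-1}X=\tau X[-1]$ together with the Serre duality formula $\Hom(A,\tau B)=\dual\Hom(B,A[1])$ recalled in Section~\ref{sec:der-struc} to rewrite it as $\dual\Hom(X,T'[2-j])$. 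This vanishes for every $j$: for $j\neq 2$ by the tilting property of $T$, and for $j=2$ precisely by the standing hypothesis $\Hom(X,T')=0$. Hence the Hom-vanishing condition on $\overline{T}$ reduces to $\Hom(F^{-1}X,T'[j])=0$ for $j\neq 0$, giving the first equivalence (together with the observation that $\overline{T}$ inherits the correct number of pairwise non-isomorphic indecomposable summands from $T$ because $F$ is an autoequivalence).

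For the second equivalence I would translate through $G$. From $F=\tau^{-1}[1]$ one obtains $\tau X[1]=F^{-1}X[2]$, and therefore $\inj_{X,T}=G(\tau X[1])=G(F^{-1}X[2])$, while $\proj_{T',T}=G(T')$. Since $G$ is a triangulated equivalence and both objects lie in $\mod B$, the derived Hom translates directly to an Ext group, giving
$$
\Ext^{k}_B(\inj_{X,T},\proj_{T',T}) \simeq \Hom_{\Der(H)}(F^{-1}X,T'[k-2]).
$$
The substitution $k=j+2$ then converts the condition $k\neq 2$ into $j\neq 0$, closing the chain of equivalences.

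The delicate step I anticipate is the Serre duality calculation on the mixed piece $\Hom(T',F^{-1}X[j])$: one must verify vanishing for every $j$ and observe that exactly one value, $j=2$, forces the use of the hypothesis $\Hom(X,T')=0$, whereas the other values reduce to the tilting property of $T$. This is precisely what singles out $\Hom(X,T')=0$ as a genuine hypothesis and simultaneously explains the asymmetric role of the degree $2$ in the Ext condition. The rest is a careful bookkeeping of shifts through $F=\tau^{-1}[1]$.
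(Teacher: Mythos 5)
Your proposal is correct and follows essentially the same route as the paper: the mixed term $\Hom_{\Der(H)}(T',F^{-1}X[j])$ is killed for all $j$ via $F^{-1}X=\tau X[-1]$ and Serre duality (with $j=2$ being exactly where the hypothesis $\Hom_{\Der(H)}(X,T')=0$ enters), so the tilting condition reduces to $\Hom_{\Der(H)}(F^{-1}X,T'[j])=0$ for $j\neq 0$, which is then identified with $\Ext^{j+2}_B(\inj_{X,T},\proj_{T',T})$ through the equivalence $G$ using $\tau X[1]=F^{-1}X[2]$. The only point worth noting is that the claim that $\overline{T}$ has $n$ pairwise non-isomorphic summands rests not just on $F$ being an autoequivalence but also on $\add T'\cap\add F^{-1}X=0$, which in fact follows from your computation $\Hom_{\Der(H)}(T',F^{-1}X)=0$; the paper leaves this equally implicit.
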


\begin{proof}
  Observe that $\Hom_{\Der(H)} (T', F^{-1}X[j]) = \Hom_{\Der(H)} (T',
  \tau X[j-1]) = \Hom_{\Der(H)}(X[j-1],T'[1]) = \Hom_{\Der(H)}
  ( X[j-2],T') = 0$ for all $j$ (for $j\neq 2$ since $T$ is a tilting
  complex and for $j=2$ by hypothesis). Therefore $\overline{T}$ is a
  tilting complex if and only if $\Hom_{\Der(H)}
  (F^{-1}X,T'[j])=0$ for all $j \neq 0$, that is, if and only if
  $\Ext^{j}_B(\inj_{X,T},\proj_{T',T})
  \simeq \Hom_{\Der(H)}(\tau X[1],T'[j])\simeq
  \Hom_{\Der(H)}(F^{-1}X,T^{\prime}[j-2])$ equals zero for all $j \neq 2$.
\end{proof}

We can strengthen the former result under an additional hypothesis
on the global dimension of $B$.

\begin{lem}\label{lem:crit-strong}
Let $T=T'\oplus X$ be a tilting complex in $\Der(H)$ 
such that $\HomD(X,T')=0$ and let $B=\End_{\Der(H)}(T)$. 
If $\gldim B\leq 2$, then $\oT=T'\oplus F^{-1}X$ is a tilting complex
in $\Der(H)$ if and only if $\HomD(\tau X,T'[k])=0$ for $k=0,-1$.
\end{lem}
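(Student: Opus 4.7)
The plan is to reduce the condition to Lemma \ref{lem:crit} and then use the global dimension bound to eliminate all but two of the vanishing requirements there. Lemma \ref{lem:crit} already tells us that $\oT$ is tilting if and only if $\HomD(F^{-1}X,T'[j])=0$ for every $j\neq 0$. Using the definition $F=\tau^{-1}\circ [1]$, so that $F^{-1}X=\tau X[-1]$, this condition can be rewritten as
$$
\HomD(\tau X,T'[k])=0\text{ for all }k\neq 1,
$$
by setting $k=j+1$. Thus the claim amounts to showing that, under the hypothesis $\gldim B\leq 2$, these vanishings hold automatically for every $k\neq 1$ except possibly $k=0$ and $k=-1$.

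To obtain this, I would transfer the Hom-spaces to $\Der(B)$ via the standard equivalence $G\colon \Der(H)\ra \Der(B)$ recalled in Section \ref{sec:tilt-basic}, which satisfies $G(T)=B$, $G(\tau T[1])=\dual B$, and commutes with shifts. Then $G(T')=\proj_{T',T}$ is a projective $B$-module, and $G(\tau X)=\inj_{X,T}[-1]$, since $G(\tau X[1])=\inj_{X,T}$. Consequently
$$
\HomD(\tau X,T'[k])\simeq \Hom_{\Der(B)}(\inj_{X,T}[-1],\proj_{T',T}[k])\simeq \Ext_B^{k+1}(\inj_{X,T},\proj_{T',T}).
$$
Since both $\inj_{X,T}$ and $\proj_{T',T}$ are genuine $B$-modules (concentrated in degree zero), the right-hand side vanishes trivially for $k+1<0$ and, by the hypothesis $\gldim B\leq 2$, also for $k+1>2$. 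Hence $\HomD(\tau X,T'[k])=0$ for every $k\leq -2$ and every $k\geq 2$ without assuming anything further.

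Combining this with the condition distilled from Lemma \ref{lem:crit}, the only values of $k\neq 1$ for which $\HomD(\tau X,T'[k])$ is not automatically zero are $k=-1$ and $k=0$. Therefore $\oT$ is a tilting complex if and only if $\HomD(\tau X,T'[-1])=0$ and $\HomD(\tau X,T'[0])=0$, which is the stated criterion. The argument is straightforward once the equivalence $G$ is invoked; the only point requiring a little care is the bookkeeping of the shifts when passing between $F^{-1}X$, $\tau X$, and the injective $\inj_{X,T}$, which I expect to be the main place a reader would need to pause rather than a real obstacle.
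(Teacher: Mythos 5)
Your proof is correct and follows essentially the same route as the paper: both reduce to Lemma \ref{lem:crit} and identify $\HomD(\tau X,T'[k])$ with $\Ext^{k+1}_B(\inj_{X,T},\proj_{T',T})$, so that $\gldim B\leq 2$ kills all degrees except $k=0,-1$. The shift bookkeeping in your computation is accurate, so there is nothing to add.
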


\begin{proof}
  We have $\HomD(F^{-1}X,T'[i])=\HomD(\tau
  X,T'[i+1])=\Ext^{i+2}_B(\inj_{X,T},\proj_{T',T})$, which equals zero
  for all $i\neq 0,-1,-2$.
  
  By Lemma \ref{lem:crit} the complex $T'\oplus F^{-1}X$ is a tilting
  complex if and only if $\HomD(F^{-1} X,T'[i])=0$
  for $i=-1,-2$. Hence the result follows.
\end{proof}

\begin{lem}
\label{lem:nuevo} 
Let $Q$ be a Dynkin quiver and $T$ a tilting complex in
$\Der(H)$. Then $\rho(T) < \tau(\Sigma(T))$.
\end{lem}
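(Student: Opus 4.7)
The plan is to verify the inequality separately for the two summands of $\rho(T)=T'\oplus F^{-1}X$, where as in Definition \ref{def:rolling-D} the summand $X$ collects the indecomposable summands of $T$ that lie on $\Sigma(T)$ and $T'$ is a complement. I read the statement $\rho(T)<\tau\Sigma(T)$ as saying that every indecomposable summand $M$ of $\rho(T)$ satisfies $M\leq\tau\Sigma_j$ for some $j$, i.e. lies in the past cone of the section $\tau\Sigma(T)$ of the AR-quiver $\Gamma=\ZZ Q$.

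For the $T'$-piece I would exploit the structure of $\ZZ Q$: every vertex has a unique expression $M=\tau^k\Sigma_j$ with $k\in\ZZ$ and $j\in\{1,\ldots,n\}$, and a check using the acyclicity of $\ZZ Q$ together with the convexity of sections shows that $M$ lies in the past cone of $\Sigma(T)$ precisely when $k\geq 0$, with the case $k=0$ characterising $M\in\Sigma(T)$. Since each summand $M$ of $T'$ is a summand of $T$, the hypothesis $T\leq\Sigma(T)$ forces $k\geq 0$; because $M\notin\Sigma(T)$ by the definition of $T'$ we in fact get $k\geq 1$, so rewriting $M=\tau^{k-1}(\tau\Sigma_j)$ exhibits $M\leq\tau\Sigma_j\in\tau\Sigma(T)$.

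For the $F^{-1}X$-piece, let $X_i$ be an indecomposable summand of $X$. Since $X_i\in\Sigma(T)$, its $\tau$-shift $\tau X_i$ belongs to $\tau\Sigma(T)$, and from $F^{-1}=\tau\circ[-1]$ we have $F^{-1}X_i=(\tau X_i)[-1]$. The key fact is that, for a Dynkin hereditary algebra, the suspension $[1]$ acts on $\Gamma$ as a forward translation, as can be read off from $[1]=\tau^{-1}\nu$ together with the observation that the Serre functor $\nu=\tau[1]$ sends each projective $P_i$ to its injective $I_i$, which lies strictly later in $\Gamma$. Consequently $N[-1]<N$ for every indecomposable $N$ of $\Der(H)$, and applied to $N=\tau X_i$ this gives $F^{-1}X_i<\tau X_i\in\tau\Sigma(T)$, placing $F^{-1}X_i$ strictly in the past cone of $\tau\Sigma(T)$.

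Combining the two cases gives $\rho(T)<\tau\Sigma(T)$ in the sense above. The main technical obstacle I foresee is the structural statement that the past cone of $\Sigma(T)$ is exactly $\{\tau^k\Sigma_j:k\geq 0\}$, equivalently that no vertex of $\Gamma$ lies strictly between the two sections $\tau\Sigma(T)$ and $\Sigma(T)$; this is delicate for bent sections and relies on the convexity of sections in $\ZZ Q$ together with the fact that in the Dynkin case all parallel paths in $\Gamma$ have the same length.
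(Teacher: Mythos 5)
Your reading of the inequality is where the gap lies. As the lemma is used in Proposition \ref{prop:rho-T-tilting}, the statement $\rho(T)<\tau\Sigma(T)$ has to mean that every indecomposable summand of $\rho(T)$ lies \emph{strictly} in the past of the section $\tau\Sigma(T)$, in particular that no summand lies on $\tau\Sigma(T)$ itself: only then can one conclude $\HomD(\tau X,T')=0$ there, since a nonzero morphism from $\tau X_i\in\tau\Sigma(T)$ to a summand of $T'$ forces a path starting on the section. Your formalization ``$M\le\tau\Sigma_j$ for some $j$'' allows $M=\tau\Sigma_j$, and your argument for the $T'$-part delivers exactly that and no more: from $T\le\Sigma(T)$ and $M\notin\Sigma(T)$ you only get $k\ge 1$, which leaves open the possibility $M\in\tau\Sigma(T)$. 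Tellingly, your proof never uses the hypothesis that $T$ is a tilting complex, and without it nothing stronger is true; the essential content of the lemma is precisely the exclusion of the value $k=1$ for summands of $T$.

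That exclusion is the paper's key step and the idea missing from your proposal: given $\Sigma_1\in\Sigma(T)$, choose a maximal $\Sigma_2\in\Sigma(T)$ with $\Sigma_1\le\Sigma_2$; by the construction of $\Sigma(T)$ all maximal elements of $\Sigma(T)$ lie in $\add T$, so $\Sigma_2\in\add T$; from $\HomD(\Sigma_1,\Sigma_2)\neq 0$ Serre duality gives $\HomD(\Sigma_2,\tau\Sigma_1[1])\neq 0$, and rigidity of the tilting complex $T$ (namely $\HomD(T,T[1])=0$) then forbids $\tau\Sigma_1\in\add T$. Hence no summand of $T$, in particular none of $T'$, lies on $\tau\Sigma(T)$, which upgrades $k\ge 1$ to $k\ge 2$ and yields the strict statement actually needed. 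A milder version of the same issue affects your $F^{-1}X$-part: proving $F^{-1}X_i<\tau X_i$ for the single vertex $\tau X_i$ does not by itself exclude $F^{-1}X_i\in\tau\Sigma(T)$; here one can instead observe that $X_i$ and $X_i[-1]$ cannot both lie on $\Sigma(T)$, because a section is contained in a single shifted module category $\mod H(\Sigma)[r]$. As written, your argument establishes a strictly weaker assertion than the lemma, and the step requiring the tilting hypothesis is absent.
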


\begin{proof}
  As usual, let $T=T'\oplus X$ with $ \rho (T) = T'\oplus F^{-1}X$ and
  $\Sigma = \Sigma (T)$.  Let $\Sigma_1 \in \Sigma$, and let
  $\Sigma_2$ be a maximal element in $\Sigma$ such that $\Sigma_1 \leq
  \Sigma_2$. That is, $\Hom_{\Der(H)}(\Sigma_1, \Sigma_2) \neq 0$, so
  $\Ext^1_{\Der(H)}(\Sigma_2, \tau(\Sigma_1)) \neq 0$ by the Serre
  duality. Our choice of $\Sigma$ implies that $\Sigma_2 \in \add T$,
  so that $\tau \Sigma_1 \notin \add T$ because $T$ is a tilting
  complex in $\Der(H)$. Thus no summand of $T$ is in $\tau \Sigma$ and
  therefore $T' < \tau \Sigma$, since $T' < \Sigma$ by the
  definition of $T'$.

  Since $\add X \subseteq \Sigma$, then $F^{-1}(X) < \tau \Sigma$,
  ending the proof of the lemma.
\end{proof}

\begin{prop}\label{prop:rho-T-tilting}
  Let $T$ be a tilting complex a tilting complex in $\Der(H)$ such
  that $\gldim \End_{\Der(H)}(T)\leq 2$. Then $\rho(T)$ is again a
  tilting complex.
\end{prop}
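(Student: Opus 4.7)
The plan is to apply Lemma \ref{lem:crit-strong}. Writing $T=T'\oplus X$ according to whichever of Definitions \ref{def:rolling-D} or \ref{def:rolling-nD} is in force, the hypothesis $\Hom_{\Der(H)}(X,T')=0$ needed by that lemma is supplied by Remark \ref{rem:hom-X-T'}, and $\gldim\EndD(T)\le 2$ is a standing assumption, so the lemma reduces the claim to verifying
\[
\Hom_{\Der(H)}(\tau X,\,T'[k])=0\quad\text{for } k=0,\,-1.
\]
I check these separately in the two cases of the definition of $\rho$.

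In the non-Dynkin case, $X=T_{\Rr[m]}$ with $m$ maximal such that $T_{\Rr[m]}\neq 0$, and $T'$ lives in $\bigoplus_{a<m}\Rr[a]$. The region $\Rr[m]$ is $\tau$-stable for either parity of $m$: each regular component is $\tau$-stable, and a transjective component $\Ii[m-1/2]\vee\Pp[m+1/2]$ is $\tau$-stable because $\tau$ sends a projective $P\in\Pp[m+1/2]$ to $\nu P[-1]\in\Ii[m-1/2]$. Since $[1]$ maps $\Rr[r]$ to $\Rr[r+1]$, this places $\tau X\in\Rr[m]$ and $\tau X[1]\in\Rr[m+1]$, and both required vanishings follow at once from the orthogonality $\Hom(\Rr[a],\Rr[b])=0$ for $a>b$ recalled in Section \ref{sec:der-struc}.

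In the Dynkin case, the key input is Lemma \ref{lem:nuevo}, which tells us $\rho(T)<\tau\Sigma(T)$, so $T'$ lies strictly to the left of $\tau\Sigma$ in the AR-quiver $\ZZ Q$ of $\Der(H)$. Because the summands of $\tau X$ sit on $\tau\Sigma$ and morphisms between indecomposables in $\Der(kQ)$ for $Q$ Dynkin travel along oriented paths in $\ZZ Q$, no morphism can go backward from $\tau X$ to $T'$, settling $k=0$. For $k=-1$ I use additionally that the shift $[1]$ moves indecomposables strictly to the right in $\ZZ Q$: the copies $\mod H,\,\mod H[1],\,\mod H[2],\dots$ are arranged consecutively from left to right (reflecting $\tau P=\nu P[-1]$, i.e., $\tau$ of a projective in $\mod H$ lies in $\mod H[-1]$). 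Consequently $T'[-1]$ lies still further to the left of $\tau\Sigma$ than $T'$ does, and the same forward-only property of paths gives $\Hom(\tau X,T'[-1])=0$.

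The delicate step, and main obstacle, is the Dynkin case for $k=-1$; it combines the placement of $T'$ provided by Lemma \ref{lem:nuevo} with the precise direction in which the shift $[1]$ acts on $\ZZ Q$. Once that direction is identified as moving indecomposables strictly rightward, the $k=-1$ argument goes through just as the $k=0$ argument does, and everything else in the proof is immediate from the structural facts on $\Der(H)$ recalled in Section \ref{sec:der-struc}.
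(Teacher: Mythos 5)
Your proof is correct and follows essentially the same route as the paper: reduction via Remark \ref{rem:hom-X-T'} and Lemma \ref{lem:crit-strong}, then Lemma \ref{lem:nuevo} together with the directedness of $\ZZ Q$ in the Dynkin case, and the ordering of the parts $\Rr[a]$ in the non-Dynkin case. Your handling of $k=-1$ (shifting $T'$ by $[-1]$ to the left) is just a reformulation of the paper's observation that $T'<\tau\Sigma<\tau\Sigma[1]$, so the two arguments coincide in substance.
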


\begin{proof}
  Again, let $T=T'\oplus X$ and $ \rho (T) = T'\oplus F^{-1}X$. First
  consider the case when $Q$ is a Dynkin quiver and let $\Sigma =
  \Sigma (T)$.  By the lemma we know that $T' < \tau \Sigma $ .  We
  also get $T' < \tau \Sigma [1]$ because $\tau \Sigma < \tau \Sigma
  [1]$.  Since the summands of $X$ are in $\Sigma$, it follows that
  $\Hom_{\Der(H)}(\tau X, T') = 0$ and $\Hom_{\Der(H)}(\tau X, T'[-1]) =
  0$. We conclude from Lemma 3.7 that $\rho ( T)$ is a tilting
  complex.

  Now consider the case where the quiver $Q$ is not Dynkin and let
  $H=\field Q$.
  As in the Definition \ref{def:rolling-nD}, let $m$ be the largest
  half-integer such that $T_{\Rr[m]}\neq 0$. Hence we have $T=T'\oplus
  X$ and $\rho(T)=T'\oplus F^{-1} X$ where $X=T_{\Rr[m]}$. Then clearly
  we have $\HomD(X,T')=0$ and $\HomD(\tau X,T'[k])=0$ for $k=0,-1$
  since $\tau X$ belongs to $\Rr[m]$ and $T'[k]$ to
  $\vee_{i>0}\Rr[m-\frac{i}{2}]$. We conclude again by Lemma
  \ref{lem:crit-strong} that $\rho(T)$ is a tilting complex.
\end{proof}

\begin{remark}\label{rem:gldim}
  The following example shows that the hypothesis on the global
  dimension of the endomorphism algebra is necessary.

  Let $Q=\mathbb{A}_4$ and $T=\bigoplus_{i=1}^4 T_i$ the tilting
  complex in $\Der(H)$ whose relative positions of the indecomposable
  summands $T_i$ are as indicated in the folowing picture.
  \begin{center}
    \begin{picture}(240,84)
      \put(0,12){
        \multiput(0,0)(40,0){6}{
          \put(0,0){\line(1,1){40}}
          \put(0,40){\line(1,1){20}}
          \put(0,40){\line(1,-1){40}}
          \put(20,60){\line(1,-1){20}}
          \qbezier[15](5,0)(20,0)(35,0)
          \qbezier[15](5,40)(20,40)(35,40)
        }
        \multiput(0,0)(40,0){5}{
          \qbezier[15](25,20)(40,20)(55,20)
          \qbezier[15](25,60)(40,60)(55,60)
        }
        \multiput(-10,10)(0,40){2}{\line(1,-1){10}}
        \put(-10,30){\line(1,1){10}}
        \multiput(240,0)(0,40){2}{\line(1,1){10}}
        \put(240,40){\line(1,-1){10}}
        \multiput(20,20)(0,40){2}{
          \qbezier[12](-5,0)(-17.5,0)(-30,0)
          }
        \multiput(0,0)(0,40){2}{
          \qbezier[3](-5,0)(-7.5,0)(-10,0)
          }
        \multiput(220,20)(0,40){2}{
          \qbezier[12](5,0)(17.5,0)(30,0)
          }
        \multiput(240,0)(0,40){2}{
          \qbezier[3](5,0)(7.5,0)(10,0)
          }
        \multiput(0,0)(0,40){2}{
          \multiput(0,0)(40,0){7}{\circle*{2}}
          \multiput(20,20)(40,0){6}{\circle*{2}}
        }
        \multiput(40,0)(60,60){2}{\put(0,0){\color{white}\circle*{4}}\put(0,0){\circle{4}}}
        \multiput(160,0)(60,60){2}{\put(0,0){\color{white}\circle*{4}}\put(0,0){\circle{4}}}      
        \multiput(80,0)(-60,60){2}{\put(0,0){\color{white}\circle*{4}}\put(0,0){\circle{4}}\put(0,0){\circle*{2}}}
        \put(40,-5){\HTCenter{\small $T_1$}}
        \put(160,-5){\HTCenter{\small $T_3$}}
        \put(80,-5){\HTCenter{\small $F^{-1}T_4$}}
        \put(100,66){\HBCenter{\small $T_2$}}
        \put(20,65){\HBCenter{\small $F^{-1}T_3$}}
        \put(220,66){\HBCenter{\small $T_4$}}
        \put(140,60){\circle*{4}}
        \put(140,65){\HBCenter{\small $T_1[1]$}}
      }
    \end{picture}
  \end{center}
  Then $B=\End_{\Der(\field Q)}(T)$ has global dimension $3$. By
  Definition \ref{def:rolling-D}, the slice $\Sigma(T)$ is precisely
  the slice containing $T_3$ and $T_4$ and therefore $X=T_3\oplus
  T_4$. Then $\rho(T)$ is not a tilting complex since
  $\Hom_{\Der(\field Q)}(F^{-1} T_4,T_1[1])\neq 0$. 
\end{remark}

\subsection{Global dimension two is preserved}

The next result is fundamental in order for the iteration to work
properly.

\begin{prop}\label{prop:gldim}
  Let $B$ be an iterated tilted algebra. If $\gldim B\leq 2$ then
  $\gldim \rho(B)\leq 2$.
\end{prop}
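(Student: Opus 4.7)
The plan is to verify $\gldim\rho(B)\leq 2$ by translating it to a $\HomD$-vanishing in $\Der(H)$. Since $\rho(T)=T'\oplus F^{-1}X$ is a tilting complex by Proposition~\ref{prop:rho-T-tilting}, it induces a derived equivalence $G_\rho\colon\Der(H)\to\Der(\rho(B))$ sending $\rho(T)\mapsto\rho(B)$ and $\tau\rho(T)[1]\mapsto \dual\rho(B)$, under which $\Ext^{i}_{\rho(B)}(G_\rho U,G_\rho V)=\HomD(U,V[i])$. Using that $\gldim=\pdim\dual B$ for iterated tilted algebras, the task reduces to showing
\[
\HomD\bigl(\tau\rho(T)[1],\rho(T)[i]\bigr)=0\qquad\text{for all }i\geq 3.
\]

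I would decompose $\tau\rho(T)[1]=\tau T'[1]\oplus\tau^{2}X$ and $\rho(T)[i]=T'[i]\oplus\tau X[i-1]$, producing four Hom-summands. Three of them collapse directly: the $(T',T')$-piece is $\Ext^{i}_{B}(\dual B|_{T'},B|_{T'})=0$ by $\gldim B\leq 2$; the $(T',F^{-1}X)$-piece, after applying the autoequivalence $\tau^{-1}$, equals $\HomD(T',X[i-2])$, which vanishes for $i\geq 3$ by the tilting axiom on $T=T'\oplus X$; and the $(F^{-1}X,F^{-1}X)$-piece reduces similarly to $\Ext^{i}_{B}(\dual B|_{X},B|_{X})=0$.

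The crux is the remaining summand $\HomD(\tau^{2}X,T'[i])$, since a direct application of Serre duality only loops back to a Hom-space of the same form. I would instead apply $\HomD(-,T'[i])$ to the Auslander-Reiten triangle $\tau^{2}X\to E_{\tau X}\to\tau X\to\tau^{2}X[1]$ in $\Der(H)$. The boundary terms $\HomD(\tau X,T'[i])=\HomD(F^{-1}X,T'[i-1])$ and $\HomD(\tau X,T'[i+1])=\HomD(F^{-1}X,T'[i])$ vanish for $i\geq 3$ because $\rho(T)$ is a tilting complex, so the long-exact sequence collapses to the isomorphism $\HomD(\tau^{2}X,T'[i])\cong\HomD(E_{\tau X},T'[i])$. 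The middle term $E_{\tau X}$ lies in the same Auslander-Reiten component as $\tau X$, hence as $X$. Combining this with the structural fact that $\gldim B\leq 2$ constrains the tilting complex $T$ to live in at most two consecutive components of the AR quiver of $\Der(H)$ --- two consecutive shifts in the Dynkin case, two consecutive $\Rr[r]$-components in the non-Dynkin case --- forces the components of $E_{\tau X}$ and of $T'[i]$ to be non-adjacent for $i\geq 3$. This yields the desired vanishing either by the hereditary property of $H$ (Dynkin case) or by $\HomD(\Rr[a],\Rr[b])=0$ for $a<b-1$ recalled in Section~\ref{sec:der-struc} (non-Dynkin case).

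The main obstacle is this final step, which rests on the characterization of iterated tilted algebras of global dimension at most two as those whose tilting complex $T$ lies in at most two consecutive components of $\Der(H)$, together with the careful bookkeeping needed to verify the non-adjacency of the components housing $E_{\tau X}$ and $T'[i]$. The Dynkin case is the more delicate, because the shift $[1]$ does not preserve the ``naive'' degree but instead acts via the Coxeter translation of $\ZZ Q$, so the non-adjacency must be read off from the partial order induced by $\Sigma(T)$ and Lemma~\ref{lem:nuevo}.
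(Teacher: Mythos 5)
Your reduction is the same as the paper's: show $\HomD(\tau\rho(T)[1],\rho(T)[j])=0$ for $j\geq 3$, split into four summands, and dispose of three of them exactly as the paper does. The problem is the fourth, crux summand $\HomD(\tau^{2}X,T'[j])$. Your AR-triangle step is fine as far as it goes (the boundary terms do vanish because $\rho(T)$ is a tilting complex by Proposition~\ref{prop:rho-T-tilting}, so $\HomD(\tau^{2}X,T'[j])\simeq\HomD(E_{\tau X},T'[j])$), but it gains nothing by itself: $E_{\tau X}$ sits in the same place as $\tau^{2}X$, and the vanishing you then need is extracted from a ``structural fact'' that you assert rather than prove, namely that $\gldim B\leq 2$ forces the summands of $T$ to lie in at most two consecutive components/shifts. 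That claim appears nowhere in the paper, does not follow from Lemma~\ref{lem:basic}, and in the Dynkin case it is not even meaningful as stated: there the AR quiver of $\Der(H)$ is a single transjective component $\ZZ Q$, and tilting complexes whose endomorphism algebras have global dimension two can spread over many degrees (take a gentle $\An_n$ algebra with non-overlapping zero relations on alternate arrows, which is iterated tilted of type $\An_n$ with $\gldim=2$ and whose tilting complex occupies arbitrarily many shifts; the paper's own Example~\ref{D8} already has summands spread over more than two $F$-periods). You acknowledge that the Dynkin case needs ``careful bookkeeping'' via $\Sigma(T)$ and Lemma~\ref{lem:nuevo}, but that bookkeeping is exactly the missing content, so the argument is incomplete precisely where the hypothesis $\gldim B\leq 2$ has to do its work.

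For comparison, the paper makes the hypothesis bite in a different and concrete way: it takes the length-two projective resolution $0\ra P_2\ra P_1\ra P_0\ra\inj_{X,T}\ra 0$ in $\mod B$ (this is where $\gldim B\leq 2$ enters), transports the two resulting triangles back to $\Der(H)$ through the derived equivalence and applies $\tau$, and then applies $\HomD(-,T'[j])$; the end terms are of the form $\HomD(\tau T_i[k],T'[j])$ with $T_i\in\add T$, which vanish by $\Ext^{i}_B(\dual B,B)=0$ for $i\geq 3$, forcing $\HomD(\tau^{2}X[1],T'[j])=0$. If you want to salvage your route, you would have to replace the unproved positional claim by an argument of this kind (or prove a genuine restriction on the spread of $T$, which in the Dynkin case would have to be phrased relative to the section $\Sigma(T)$, not in terms of components).
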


\begin{proof}
  Let $H$ be a hereditary algebra and $T=T'\oplus X$ a tilting complex
  in $\Der(H)$ such that $B=\EndD(T)$ and $\rho(T)=T'\oplus
  F^{-1}X$. Then we have $\HomD(X,T')=0$ by Remark \ref{rem:hom-X-T'}
  and by Proposition \ref{prop:rho-T-tilting} the complex $\rho(T)$ is
  a tilting complex in $\Der(H)$. To shorten notations we set
  $\oT=\rho(T)$ and $\oB=\rho(B)$.  We shall prove that
  $\Ext^j_{\oB}(\dual\oB,\oB)=0$ for all $j\geq 3$.  Since $\oT$ is a
  tilting complex, we have can show this by proving that
  $\HomD(\tau\oT[1],\oT[j])$ is zero for $j\geq 3$.

  First note that
  \begin{equation}\label{eq:star}
  \HomD(\tau T[1], T[i])=0\quad\text{ for all $i\neq 0,1,2$},
  \end{equation}
  since $\HomD(\tau T[1], T[i])\simeq\Ext^i_B(\dual B,B)$.

  Therefore $\HomD(\tau F^{-1} X[1],F^{-1} X[j])=\HomD(\tau X[1],X[j])=0$ for
  $j\geq 3$ and $\HomD(\tau T'[1],T'[j])=0$ for $j\geq 3$.
  Also, $\HomD(\tau T'[1],F^{-1}X[j])=\HomD(T'[1],X[j-1])$, which is 
  zero for all $j\neq 2$  since $T$ is a tilting complex.  
  
  Hence, it remains to see that $\HomD(\tau^2 X,T'[j])=0$ for $j\geq
  3$.  The minimal projective resolution of $\inj_{X,T}$ in $\mod B$
  $$
  0\ra P_2\ra P_1\ra
  P_0\xrightarrow{\varphi}\inj_{X,T}\ra 0
  $$ gives rise to two exact triangles $\Delta_a\colon K\ra P_0\ra
  \inj_{X,T}\ra K[1]$ and $\Delta_b\colon P_2\ra P_1\ra K\ra
    P_2[1]$, where $K$ denotes the kernel of $\varphi$.

  To both triangles apply first the inverse of the equivalence
  $G\colon\Der(H)\ra\Der(B)$ and then $\tau$, to obtain exact
  triangles of the form $S\ra \tau T_0\ra \tau^2 X[1]\ra S[1]$ and
  $\tau T_2\ra \tau T_1\ra S\ra \tau T_2[1]$
  with $S=\tau G^{-1}(K)$ and some $T_0,T_1,T_2\in\add T$. To these
  triangles apply 
  the homological functor $\HomD(-,T'[j])$ to get exact sequences
  \begin{gather}\label{eq:exseq-a}
    (\tau T_0[1], T'[j])\ra (S[1],T'[j])\ra (\tau^2 X[1],T'[j])\ra (\tau
    T_0,T'[j])\\
    \label{eq:exseq-b}
    (\tau T_2[2],T'[j])\ra (S[1],T'[j])\ra (\tau T_1[1],T'[j]),
  \end{gather}
  where we abbreviated $(Y,Z)=\HomD(Y,Z)$.  By \eqref{eq:star}, the end
  terms of both sequences \eqref{eq:exseq-a} and \eqref{eq:exseq-b}
  are zero for $j>3$ and hence we get $\HomD(\tau^2
  X[1],T'[j])\simeq \HomD(S[1],T'[j])=0$ for $j>3$, which is what we
  wanted to prove.
\end{proof}

\subsection{Iterated rolling}

We now study the iteration of rolling. Fix a quiver $Q$, set $H=\field
Q$. Now start from a given tilting complex $T$ with endomorphism
algebra $B$ with $\gldim B\leq 2$. By Proposition
\ref{prop:rho-T-tilting} the complex $\rho(T)$ is again a tilting
complex and by Proposition \ref{prop:gldim} the endomorphism algebra
$\rho(B)=\EndD(\rho(T))$ satisfies $\gldim \rho(B)\leq 2$. Iterating
we get a sequence of tilting complexes $\rho^h(T)$ with endomorphism
algebras $\rho^h(B)$.  We will show that for sufficiently large
$h$ the algebra $\rho^h(B)$ is tilted.

For this we need some preliminary result in case where $Q$ is Dynkin.
Recall from section \ref{sec:der-struc} that for $Q$ Dynkin, $d(Y,Z)$
denotes the length of the paths in the Auslander-Reiten quiver
$\Gamma$ of $\Der(\field Q)$ from $Y$ to $Z$.

Let $\rho^h(T)=\bigoplus_{i=1}^n T^{(h)}_i$ be the decomposition into
indecomposables and define the natural number
$$
m_h(i)=\sum_{j=1}^n d(T^{(h)}_i,T^{(h)}_j).
$$ 

The following definition will be helpful to simplify the arguments.
\begin{defn}\label{def:h-sigma}
  Let $Q$  be a Dynkin quiver. 
  For each section $\Sigma$ we denote by $H(\Sigma)$ the hereditary
  algebra which has as injectives (concentrated in degree zero) the objects
  in $\Sigma$. That is, we can define $H(\Sigma)[0]=\bigoplus_{i=1 }^n
  \tau^{-1}\Sigma_i[-1]$. Notice that $Q$ and the quiver of $H(\Sigma)$
  coincide up to the orientation of the arrows.
\end{defn}

Now, for each for each $h \geq 0$ 
and each section $\Sigma$ define the set
$$
G_h(\Sigma)=\{i\mid T^{(h)}_i\not\in \mod H(\Sigma)[0]\}
$$
and the natural number
$$
n_h(\Sigma)=\sum_{i\in G_h(\Sigma)} m_h(i).
$$ 
Notice that $n_h(\Sigma)=0$ if and only if $\rho^h(T)\in\mod
H(\Sigma)[0]$.  Finally, let $\Sigma^{(h)}=\Sigma(\rho^h(T))$ be the
section uniquely defined by $\rho^h(T)$ as in section \ref{sec:rolling}. 

\begin{lem}
  \label{lem:dist}
  If $n_h(\Sigma^{(h)})>0$ then
  $n_{h+1}(\Sigma^{(h+1)})<n_h(\Sigma^{(h)})$ and if
  $n_h(\Sigma^{(h)})=0$ then $n_{h+1}(\Sigma^{(h+1)})=0$.
\end{lem}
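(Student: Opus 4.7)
The plan is to analyze how the rolling affects the canonical section and the contribution of each summand of $\rho^h(T)$ to the counting function. Writing $\rho^h(T) = T' \oplus X$ as in the definition of rolling, the first step is to observe that $\rho^{h+1}(T) = T' \oplus F^{-1}X \leq \Sigma^{(h)}$: indeed, $T' \leq \rho^h(T) \leq \Sigma^{(h)}$ by definition of $\Sigma^{(h)}$, and $F^{-1}X = \tau X[-1] < X \leq \Sigma^{(h)}$ because $\tau$ and $[-1]$ both move objects backward in the partial order induced by the arrows of $\Gamma$. Hence $\Sigma^{(h+1)}$ may be obtained from $\Sigma^{(h)}$ by iterating the push-back operation, replacing each maximal vertex $\Sigma_j \notin \add \rho^{h+1}(T)$ by $\tau \Sigma_j$; the procedure terminates because the $n$ indecomposable summands of $\rho^{h+1}(T)$ represent the $n$ distinct $\tau$-orbits and serve as ``walls'' in each orbit.

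For the case $n_h(\Sigma^{(h)}) = 0$, I would use the fact that $\rho^h(T) \in \mod H(\Sigma^{(h)})[0]$, so that $\rho^h(T)$ is already a tilting module over the hereditary algebra $H(\Sigma^{(h)})$. The key verification is that under the push-back from $\Sigma^{(h)}$ to $\Sigma^{(h+1)}$, the section is shifted exactly enough that $\Sigma^{(h+1)}$ has each of its vertices at the corresponding summand of $\rho^{h+1}(T)$ in that $\tau$-orbit (or strictly above it). The summands of $T'$, which were strictly between the projectives $\tau^{-1}\Sigma^{(h)}[-1]$ and injectives $\Sigma^{(h)}$ of $H(\Sigma^{(h)})$ and not on the section in the $X$-directions, remain in the shifted fundamental domain $\mod H(\Sigma^{(h+1)})[0]$. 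The summands $F^{-1}X = \tau X[-1]$ sit inside $\mod H(\Sigma^{(h+1)})[0]$ because the push-back of the $X$-vertices of $\Sigma^{(h)}$ continues until the section reaches these very summands. This gives $n_{h+1}(\Sigma^{(h+1)}) = 0$.

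For the case $n_h(\Sigma^{(h)}) > 0$, I would fix a summand $T^{(h)}_i \in G_h(\Sigma^{(h)})$; such a summand necessarily lies in $T'$, since the summands of $X$ lie on $\Sigma^{(h)}$ and therefore belong to $\mod H(\Sigma^{(h)})[0]$. After one rolling step, this summand persists in $\rho^{h+1}(T)$, but it is measured against the new section $\Sigma^{(h+1)}$, which is pushed back from $\Sigma^{(h)}$. Two scenarios can occur: either $T^{(h)}_i$ has moved into $\mod H(\Sigma^{(h+1)})[0]$ because the new projectives $\tau^{-1}\Sigma^{(h+1)}[-1]$ are further back and no longer obstruct it, so its index leaves $G_{h+1}$; or it stays in $G_{h+1}$ but at least one distance $d(T^{(h)}_i, X_k)$ contributing to $m_h(i)$ is replaced by $d(T^{(h)}_i, F^{-1}X_k)$, which is strictly smaller since $F^{-1}X_k = \tau X_k[-1]$ lies strictly closer to $T^{(h)}_i$ in $\Gamma$. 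Summing these contributions yields $n_{h+1}(\Sigma^{(h+1)}) < n_h(\Sigma^{(h)})$.

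The hardest part will be the precise combinatorial bookkeeping in the third step, namely to verify that no ``compensating'' increase can occur: the total over all indices of the distances $d(T^{(h+1)}_i, T^{(h+1)}_j)$ must drop, not just change. This requires a careful analysis of the push-back trajectory from $\Sigma^{(h)}$ to $\Sigma^{(h+1)}$, together with monotonicity of distance under the action of $F^{-1}$ on $X$ relative to the ``deep'' summands in $G_h$.
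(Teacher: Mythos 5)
There is a genuine gap, and it sits exactly where you acknowledge it: the strict-decrease bookkeeping in the case $n_h(\Sigma^{(h)})>0$ is never carried out, and the dichotomy you propose for a fixed summand of $T'$ does not account for all contributions to $n_{h+1}(\Sigma^{(h+1)})$. Working with the canonical section $\Sigma^{(h+1)}$ directly, as you do, you must also control the relocated summands themselves: if summands of $T'$ lying close to $\Sigma^{(h)}$ stop the push-back early, the new window $\mod H(\Sigma^{(h+1)})[0]$ need not reach down to $F^{-1}X$, so indices of summands $F^{-1}X_k$ can enter $G_{h+1}(\Sigma^{(h+1)})$ and contribute brand-new terms $m_{h+1}(k)$, including the (possibly large) distances $d(F^{-1}X_k,T^{(h+1)}_j)$; your second and third paragraphs simply assert this cannot happen ("the push-back continues until the section reaches these very summands") without proof. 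Likewise the comparison $d(T^{(h)}_i,F^{-1}X_k)<d(T^{(h)}_i,X_k)$ needs an argument when distances vanish, since $d=0$ whenever there is no path. The paper avoids all of this by never analyzing $\Sigma^{(h+1)}$: it measures $\rho^{h+1}(T)$ against the explicit auxiliary section $\Sigma'=\tau^2\Sigma^{(h)}$, for which $F^{-1}X_i=\tau X_i[-1]$ is precisely a projective of $H(\Sigma')$ (because $X_i$ lies on $\Sigma^{(h)}$), so the moved summands automatically lie in $\mod H(\Sigma')[0]$ and contribute nothing; combined with the distance inequalities for summands of $T'$ this gives $n_{h+1}(\Sigma')<n_h(\Sigma^{(h)})$, respectively $n_{h+1}(\Sigma')=0$, and the lemma follows from $n_{h+1}(\Sigma^{(h+1)})\leq n_{h+1}(\Sigma')$. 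Your plan has no substitute for this device, so as it stands it is an outline of a harder computation rather than a proof.

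A smaller but genuine error: your justification of termination of the push-back, namely that the $n$ indecomposable summands of $\rho^{h+1}(T)$ "represent the $n$ distinct $\tau$-orbits," is false. Already for $\mathbb{A}_3$ with linear orientation, $S_1\oplus S_3\oplus P_1$ is a tilting module whose summands $S_1$ and $S_3$ lie in the same $\tau$-orbit of $\ZZ Q$. Termination and independence of the starting section are in any case part of the construction of $\Sigma(T)$ in Section \ref{sec:rolling} of the paper and need not be re-derived, but they should not be based on this claim.
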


\begin{proof}
  First suppose that $n_h(\Sigma^{(h)})>0$.  Then, if
  $\rho^h(T)=T'\oplus X$ and $\rho^{h+1}(T)=T'\oplus F^{-1}X$ then for
  $\Sigma'=\tau^2 \Sigma^{(h)}$ we have $F^{-1}X\in \mod
  H(\Sigma')[0]$ and $d(Y,F^{-1} X_i)<d(Y,X_i)$ for all indecomposable
  summands $Y$ of $T'$, $X_i$ of $X$.
  Consequently
  $n_{h+1}(\Sigma')<n_h(\Sigma^{(h)})$ and since clearly
  $n_{h+1}(\Sigma^{(h+1)})\leq n_{h+1}(\Sigma')$ the claim follows.

  If $n_h(\Sigma^{(h)})=0$ then with the same argument as above we
  have $F^{-1}X\in \mod H(\Sigma')[0]$ if
  $\Sigma'=\tau^2\Sigma^{(h)}$. Thus we see $\rho^{h+1}(T)$ belongs to
  $\mod H(\Sigma')[0]$ and consequently $n_{h+1}(\Sigma^{(h+1)})=0$.
\end{proof}

We are now in a position to prove Theorem  \ref{thm:rolling-to-tilted}, stated in the introduction.

\renewcommand{\thedefthm}{\ref{thm:rolling-to-tilted}}
\begin{defthm}
  Let $B$ be an iterated tilted algebra of type $Q$ with $\gldim B\leq
  2$ then for sufficiently large $h$ the algebra $\rho^h(B)$ is tilted
  of type $Q$.
\end{defthm}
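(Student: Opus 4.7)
The plan is to iterate the rolling until $\rho^h(T)$ lies entirely in $\mod H'[c]$ for some hereditary algebra $H'$ of type $Q$ and some integer $c$. At that point the shift $\rho^h(T)[-c]$ is a tilting module over $H'$, and since rolling and shifting preserve the endomorphism algebra up to canonical isomorphism, $\rho^h(B) = \End_{\Der(H)}(\rho^h(T)) \simeq \End_{H'}(\rho^h(T)[-c])$ is tilted of type $Q$. Propositions \ref{prop:rho-T-tilting} and \ref{prop:gldim} jointly guarantee that the iteration is well defined: at every stage $\rho^h(T)$ is again a tilting complex and $\gldim \rho^h(B) \leq 2$.

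For the Dynkin case, Lemma \ref{lem:dist} furnishes the non-negative integer invariant $n_h(\Sigma^{(h)})$ which strictly decreases whenever it is positive and is absorbing at zero. Pick the first $h_0$ with $n_{h_0}(\Sigma^{(h_0)}) = 0$; by the characterization recorded just before Lemma \ref{lem:dist}, this forces every summand of $\rho^{h_0}(T)$ to lie in $\mod H(\Sigma^{(h_0)})[0]$. Since $H(\Sigma^{(h_0)})$ is hereditary with underlying graph $Q$ and its bounded derived category is identified with $\Der(H)$, the tilting complex $\rho^{h_0}(T)$, being concentrated in degree zero as an object of $\Der(H(\Sigma^{(h_0)}))$, is a tilting $H(\Sigma^{(h_0)})$-module. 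Hence $\rho^{h_0}(B) \simeq \End_{H(\Sigma^{(h_0)})}(\rho^{h_0}(T))$ is tilted of type $Q$.

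For the non-Dynkin case, decompose $T = \bigoplus_r T_{\Rr[r]}$ and note that one rolling leaves all $T_{\Rr[r]}$ with $r < m$ untouched and replaces the top summand $T_{\Rr[m]}$ by $F^{-1}T_{\Rr[m]} \in \Rr[m-1]$. Therefore the top level strictly decreases by $\tfrac12$ per rolling until $\rho^h(T)$ is supported on a single level $\Rr[r_0]$; from then on, subsequent rollings act as the autoequivalence $F^{-1}$ on the whole complex and the endomorphism algebra is stable. If $r_0$ is an integer, $\rho^h(T)[-r_0]$ is a tilting complex concentrated in degree zero inside the regular part $\Rr$, hence a tilting $H$-module. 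If $r_0 = s - \tfrac12$ is a half-integer, the summands of $\rho^h(T)$ lie in the transjective component $\Ii[s-1]\vee \Pp[s]$, which has the shape $\ZZ Q$; one may then choose a complete slice $\Sigma'$ of this component containing all the summands above it, take $H' = H(\Sigma')$ of type $Q$ (defined as in Definition \ref{def:h-sigma}), and obtain $\rho^h(T)[-c] \in \mod H'[0]$ for an appropriate $c\in\ZZ$, which is a tilting $H'$-module.

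The Dynkin case is essentially assembled from Lemma \ref{lem:dist}; the genuine obstacle is the non-Dynkin transjective subcase, where the key technical point is to exhibit the auxiliary hereditary algebra $H'$ of type $Q$ from a slice in the transjective $\ZZ Q$-shaped component in whose module category $\rho^h(T)$ sits after a suitable shift.
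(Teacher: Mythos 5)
Your Dynkin argument is correct and coincides with the paper's: Lemma \ref{lem:dist} provides the decreasing invariant, $n_{h_0}(\Sigma^{(h_0)})=0$ forces $\rho^{h_0}(T)\in\mod H(\Sigma^{(h_0)})[0]$, and $H(\Sigma^{(h_0)})$ is hereditary with the same underlying graph as $Q$, so $\rho^{h_0}(B)$ is tilted of type $Q$.

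The non-Dynkin case, however, contains a genuine gap. Rolling sends the top part $T_{\Rr[m]}$ to $\Rr[m-1]$, i.e.\ it jumps down by a whole integer, skipping the level $\Rr[m-\frac12]$. Hence if the support of the complex occupies two adjacent levels $\Rr[p]\cup\Rr[p+\frac12]$ (one regular, one transjective) with both parts nonzero, one rolling yields support $\Rr[p-\frac12]\cup\Rr[p]$: the two-level configuration persists forever and never collapses to a single level. This already happens for a tilting $H$-module having both regular and non-regular summands. So your claim that ``the top level strictly decreases by $\frac12$ per rolling until $\rho^h(T)$ is supported on a single level $\Rr[r_0]$'' is false, and your two subcases miss the generic situation; indeed the subcase where $r_0$ is an integer (a tilting object made entirely of regular complexes) is vacuous, which is a symptom of the misanalysis. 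What the paper proves is that for large $h$ the support lies in $\Rr[p]\cup\Rr[p+\frac12]$, and the essential remaining step --- absent from your argument --- is to produce a hereditary algebra $H'$ whose module category, placed in the appropriate degree, contains \emph{both} levels at once: if $p$ is an integer one chooses a section $\Sigma$ of the transjective level $\Rr[p+\frac12]$ lying above all summands of $\rho^h(T)$ in that level and lets $H'$ have its injectives on $\Sigma$; if $p$ is a half-integer one chooses a section of $\Rr[p]$ lying below all summands and lets $H'$ have its projectives there. Then $\mod H'[0]$ contains the relevant part of the transjective component together with the whole adjacent regular level, so $\rho^h(T)$ is, up to shift, a tilting $H'$-module. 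Your slice construction is the right tool, but you only deploy it in the (non-occurring) single-level situation; it must be deployed in the two-level situation, with the additional check that the regular part also lands in $\mod H'[0]$.
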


\begin{proof}
  Let $H=\field Q$ and $T$ be a tilting complex in $\Der(H)$ such that
  $B=\EndD(T)$.  We have to show that for sufficiently large
  $h$ there exists a hereditary algebra
  $H'$ (which depends on $h$) with $\rho^h(T)\in\mod H'[0]$.

  In case $Q$ is Dynkin this follows directly from Lemma
  \ref{lem:dist}.  In case that $Q$ is not Dynkin we write
  $T=\bigoplus_{i=d}^{s} T_{\Rr[i/2]}$ for some integers $d\leq
  s$.  Then by definition $\rho(T)$ belongs to $\bigcup_{i=d}^{s-1}
  \Rr[i/2]\cup \Rr[\tfrac{s}{2}-1]$. By iterating, we get that for sufficiently
  large $h$ the complex $\rho^h(T)$ belongs to
  $\Rr[p]\cup\Rr[p+\tfrac{1}{2}]$ for some half integer $p$. If $p$ is
  an integer then let $\Sigma_1,\ldots,\Sigma_n$ be any section of
  $\Rr[p+\tfrac{1}{2}]$ such that $T_i\leq \Sigma_j$ for each $j$ and
  each indecomposable summand of $T_{\Rr[p+1/2]}$. Then $H'=H(\Sigma)$
  is a hereditary algebra for which $\rho^h(T)\in\mod H'[0]$.  If $p$
  is a halfinteger then choose a section $\Sigma$ in $\Rr[p]$ such
  that $\Sigma\leq T$ and define $H'$ to be the hereditary
  algebra having its projectives in $\Sigma$. Again we have
  $\rho^h(T)\in\mod H'[0]$.   
\end{proof}

We illustrate the former result by an example.

\begin{exmp}\label{D8}
Let $Q$ be a quiver of type $\mathbb{D}_8$ with some orientation and
$H=\field Q$.  In the following picture the Auslander-Reiten quiver
$\Gamma$ of the derived category $\Der(H)$ is indicated; the arrows
are going from left to right and are drawn as lines to simplify the
picture. The indecomposable summand $T_i$ of the tilting
complex $T=\bigoplus_{i=1}^8 T_i$ has been indicated
by the number $i$ inside a circle, that is, the symbol \begin{picture}(10,8)\put(5,3){\mylabel{$i$}}\end{picture}.
Furthermore, $F^{-1}T_i$, resp. $F^{-2}T_i$, has been indicated by the
symbol 
\begin{picture}(10,8)\put(5,3){\mylabeltwo{$i$}{$\bullet$}}\end{picture},
resp.    
\begin{picture}(10,8)\put(5,3){\mylabeltwo{$i$}{$\bullet\bullet$}}\end{picture}.
\begin{center}
  \begin{picture}(352,116)
    \put(0,10){
      \put(0,80){\line(1,2){8}}
      \put(0,48){\line(1,2){24}}
      \put(0,16){\line(1,2){40}}
      \multiput(8,0)(16,0){19}{\line(1,2){48}}
      \put(312,0){\line(1,2){40}}
      \put(328,0){\line(1,2){24}}
      \put(344,0){\line(1,2){8}}
      \put(0,16){\line(1,-2){8}}
      \put(0,48){\line(1,-2){24}}
      \put(0,80){\line(1,-2){40}}
      \multiput(8,96)(16,0){19}{\line(1,-2){48}}
      \put(312,96){\line(1,-2){40}}
      \put(328,96){\line(1,-2){24}}
      \put(344,96){\line(1,-2){8}}
      \multiput(0,80)(16,0){22}{
        \qbezier(0,0)(4,-2)(8,-4)
        \qbezier(8,-4)(12,-2)(16,0)
        }
      \multiput(0,0)(0,32){4}{
        \multiput(-8,0)(16,0){23}{
          \qbezier[7](4,0)(8,0)(12,0)
        }
      }
      \multiput(-8,76)(16,0){23}{
        \qbezier[7](4,0)(8,0)(12,0)
      }
      \multiput(0,16)(0,32){3}{
        \multiput(0,0)(16,0){22}{
          \qbezier[7](4,0)(8,0)(12,0)
        }
      }
      \multiput(0,0)(0,32){3}{
        \qbezier(0,16)(-2,20)(-4,24)
        \qbezier(0,16)(-2,12)(-4,8)
      }
      \multiput(352,0)(0,32){3}{
        \qbezier(0,16)(2,20)(4,24)
        \qbezier(0,16)(2,12)(4,8)
      }
      \multiput(0,0)(0,32){4}{
        \multiput(8,0)(16,0){22}{\mypdott}
      }
      \multiput(8,76)(16,0){22}{\mypdott}
      \multiput(0,0)(0,32){3}{
        \multiput(0,16)(16,0){23}{\mypdott}
      }
      \put(8,0){\mylabel{1}}
      \put(104,0){\mylabel{2}}
      \put(120,32){\mylabel{3}}
      \put(200,0){\mylabel{4}}
      \put(232,64){\mylabel{5}}
      \put(232,96){\mylabel{6}}
      \put(248,76){\mylabel{7}}
      \put(344,76){\mylabel{8}}
      \put(216,76){\mylabeltwo{8}{$\bullet$}}
      \put(88,76){\mylabeltwo{8}{$\bullet\bullet$}}
      \put(72,0){\mylabeltwo{4}{$\bullet$}}
      \put(104,64){\mylabeltwo{5}{$\bullet$}}
      \put(104,96){\mylabeltwo{6}{$\bullet$}}
      \put(120,76){\mylabeltwo{7}{$\bullet$}}
    }
  \end{picture}
\end{center}
We then have 
\begin{align*}
T&= 
T_1\oplus T_2\oplus T_3\oplus T_4 \oplus T_5 \oplus T_6 \oplus T_7
\oplus T_8\\
\rho(T)& =
T_1\oplus T_2\oplus T_3\oplus T_4 \oplus T_5 \oplus T_6 \oplus T_7
\oplus F^{-1}T_8\\
\rho^2(T)& =
T_1\oplus T_2\oplus T_3\oplus F^{-1}T_4 \oplus F^{-1}T_5 \oplus F^{-1}T_6
\oplus F^{-1}T_7 \oplus F^{-1}T_8\\
\rho^3(T)& =
T_1\oplus T_2\oplus T_3\oplus F^{-1}T_4 \oplus F^{-1}T_5 \oplus F^{-1}T_6
\oplus F^{-1}T_7 \oplus F^{-2}T_8
\end{align*}
Define $B_h=\EndD(\rho^h(T))$. The following picture shows $B_h=\field
Q_h/I_h$ for
$h=0,1,2,3$ by a presentation. As usual, relations are indicated by dotted
lines. 
\begin{center}
  \begin{picture}(320,110)
    \put(0,25){
      \put(-8,70){$B_0\colon$}
      \put(0,20){\circle*{2}}
      \put(10,40){\circle*{2}}
      \put(20,20){\circle*{2}}
      \put(30,0){\circle*{2}}
      \put(40,20){\circle*{2}}
      \put(40,60){\circle*{2}}
      \put(50,40){\circle*{2}}
      \put(65,40){\circle*{2}}
      \put(-3,20){\RVCenter{\small 1}}
      \put(10,43){\HBCenter{\small 2}}
      \put(18,18){\RTCenter{\small 3}}
      \put(30,-2){\HTCenter{\small 4}}
      \put(42,18){\LTCenter{\small 5}}
      \put(40,63){\HBCenter{\small 6}}
      \put(46,40){\RVCenter{\small 7}}
      \put(68,40){\LVCenter{\small 8}}
      \put(1,22){\vector(1,2){8}}
      \multiput(11,38)(10,-20){2}{\vector(1,-2){8}}
      \multiput(31,2)(10,20){2}{\vector(1,2){8}}
      \put(41,58){\vector(1,-2){8}}
      \put(51.5,40){\vector(1,0){12}}
      \qbezier[12](3,22)(10,31)(17,22)
      \qbezier[12](23,18)(30,9)(37,18)
      \qbezier[12](43,23)(52,35)(62,37)
      \qbezier[12](43,57)(52,45)(62,43)
    }
    \put(100,25){
      \put(-8,70){$B_1\colon$}
      \put(0,20){\circle*{2}}
      \put(10,40){\circle*{2}}
      \put(20,20){\circle*{2}}
      \put(30,0){\circle*{2}}
      \put(40,20){\circle*{2}}
      \put(40,60){\circle*{2}}
      \put(50,40){\circle*{2}}
      \put(30,40){\circle*{2}}
      \put(-3,20){\RVCenter{\small 1}}
      \put(10,43){\HBCenter{\small 2}}
      \put(19,18){\RTCenter{\small 3}}
      \put(30,-2){\HTCenter{\small 4}}
      \put(41,18){\LTCenter{\small 5}}
      \put(40,63){\HBCenter{\small 6}}
      \put(53,40){\LVCenter{\small 7}}
      \put(27,40){\RVCenter{\small 8}}
      \put(1,22){\vector(1,2){8}}
      \multiput(11,38)(10,-20){2}{\vector(1,-2){8}}
      \multiput(31,2)(10,20){2}{\vector(1,2){8}}
      \multiput(41,58)(-10,-20){2}{\vector(1,-2){8}}
      \put(31,42){\vector(1,2){8}}
      \qbezier[12](3,22)(10,31)(17,22)
      \qbezier[12](23,18)(30,9)(37,18)
      \qbezier[12](33,40)(40,40)(47,40)
    }
    \put(195,5){
      \put(-15,90){$B_2\colon$}
      \put(0,60){\circle*{2}}
      \put(10,80){\circle*{2}}
      \put(20,60){\circle*{2}}
      \put(-5,40){\circle*{2}}
      \put(10,40){\circle*{2}}
      \put(10,0){\circle*{2}}
      \put(20,20){\circle*{2}}
      \put(35,20){\circle*{2}}
      \put(-3,60){\RVCenter{\small 1}}
      \put(10,83){\HBCenter{\small 2}}
      \put(23,60){\LVCenter{\small 3}}
      \put(-8,40){\RVCenter{\small 4}}
      \put(14,40){\LVCenter{\small 5}}
      \put(7,0){\RVCenter{\small 6}}
      \put(16,20){\RVCenter{\small 7}}
      \put(38,20){\LVCenter{\small 8}}
      \put(1,62){\vector(1,2){8}}
      \multiput(11,78)(0,-40){2}{\vector(1,-2){8}}
      \multiput(11,2)(0,40){2}{\vector(1,2){8}}
      \multiput(-3.5,40)(25,-20){2}{\vector(1,0){12}}
      \qbezier[12](3,62)(10,71)(17,62)
      \qbezier[12](13,3)(22,15)(32,17)
      \qbezier[12](13,37)(22,25)(32,23)
      \qbezier[12](-2,42)(8,44)(17,57)
    }    
    \put(285,5){
      \put(-15,90){$B_3\colon$}
      \put(0,60){\circle*{2}}
      \put(10,80){\circle*{2}}
      \put(20,60){\circle*{2}}
      \put(-5,40){\circle*{2}}
      \put(10,40){\circle*{2}}
      \put(10,0){\circle*{2}}
      \put(20,20){\circle*{2}}
      \put(0,20){\circle*{2}}
      \put(-3,60){\RVCenter{\small 1}}
      \put(10,83){\HBCenter{\small 2}}
      \put(23,60){\LVCenter{\small 3}}
      \put(-8,40){\RVCenter{\small 4}}
      \put(14,40){\LVCenter{\small 5}}
      \put(7,0){\RVCenter{\small 6}}
      \put(20,17){\HTCenter{\small 7}}
      \put(-3,20){\RVCenter{\small 8}}
      \put(1,62){\vector(1,2){8}}
      \multiput(11,78)(0,-40){2}{\vector(1,-2){8}}
      \multiput(11,2)(0,40){2}{\vector(1,2){8}}
      \put(-3.5,40){\vector(1,0){12}}
      \put(1,22){\vector(1,2){8}}
      \put(1,18){\vector(1,-2){8}}
      \qbezier[12](3,62)(10,71)(17,62)
      \qbezier[12](3,20)(10,20)(17,20)
      \qbezier[12](-2,42)(8,44)(17,57)
    }    
  \end{picture}
\end{center} 
Note that $B_3$ is tilted. By the above result all algebras $B_i$ for
$i>3$ are also tilted. By calculating the further tilting complexes
$\rho^h(T)$ for $h=4,\ldots,8$ one verifies that $B_h\simeq B_{h+3}$
for $h\geq 5$. 
Observe that in this example all relation extensions
$\ealg(\rho^h(B))$ have isomorphic quivers as shown in the following
picture. 
This is no coincidence and
will be shown in Section \ref{sec:rolling-rel-ext} below.
\begin{center}
  \begin{picture}(50,80)
    \put(0,10){
      \put(0,20){\circle*{2}}
      \put(10,40){\circle*{2}}
      \put(20,20){\circle*{2}}
      \put(30,0){\circle*{2}}
      \put(40,20){\circle*{2}}
      \put(40,60){\circle*{2}}
      \put(50,40){\circle*{2}}
      \put(30,40){\circle*{2}}
      \put(-3,20){\RVCenter{\small 1}}
      \put(10,43){\HBCenter{\small 2}}
      \put(19,17){\RTCenter{\small 3}}
      \put(30,-2){\HTCenter{\small 4}}
      \put(42,18){\LTCenter{\small 5}}
      \put(40,63){\HBCenter{\small 6}}
      \put(53,40){\LVCenter{\small 7}}
      \put(27,40){\RVCenter{\small 8}}
      \put(1,22){\vector(1,2){8}}
      \multiput(11,38)(10,-20){2}{\vector(1,-2){8}}
      \multiput(31,2)(10,20){2}{\vector(1,2){8}}
      \multiput(41,58)(-10,-20){2}{\vector(1,-2){8}}
      \put(31,42){\vector(1,2){8}}
      \multiput(18,20)(20,0){2}{\vector(-1,0){16}}
      \put(48,40){\vector(-1,0){16}}
    }    
  \end{picture}
\end{center}
\end{exmp}

The next result shows the importance of iterated tilted algebras
with global dimension less or equal than two. It has been obtained
independently by Osamu Iyama in \cite[Thm. 1.22]{I} and also by Claire
Amiot in \cite[4.10]{A} using different techniques.

\begin{cor}
  Let $H$ be a hereditary algebra.
  If $T$ is a titling complex in $\Der(H)$ such that $\gldim B\leq 2$,
  where $B=\EndD(T)$ then $T$ is a cluster-tilting object in the
  cluster category $\CluCat$ and $C=\EndC(T)$ is a cluster-tilted
  algebra.
\end{cor}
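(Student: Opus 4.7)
The plan is to reduce, via the iterated rolling of Section~\ref{sec:rolling}, to the classical situation of a tilting module over a hereditary algebra, where the statement is the original result of Buan--Marsh--Reineke--Reiten--Todorov \cite{BMRRT}.

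First I would observe that rolling does not change the isomorphism class of $T$ in $\CluCat$: since $\CluCat=\Der(H)/F^{\ZZ}$ identifies each object with all of its $F$-shifts, writing $T=T'\oplus X$ and $\rho(T)=T'\oplus F^{-1}X$ yields $T\simeq \rho(T)$ in $\CluCat$, and by iteration $T\simeq\rho^h(T)$ in $\CluCat$ for every $h\ge 0$. It therefore suffices to show that some $\rho^h(T)$ is a cluster-tilting object, for then $T$ itself is, and $C=\EndC(T)\simeq\EndC(\rho^h(T))$ is a cluster-tilted algebra. Note also that the $n$ pairwise non-isomorphic summands of $T$ in $\Der(H)$ remain pairwise non-isomorphic in $\CluCat$: if two distinct summands $T_i$, $T_j$ agreed in $\CluCat$, one would have $T_i\simeq F^k T_j$ in $\Der(H)$ for some $k\neq 0$, contradicting the tilting condition $\HomD(T,T[k])=0$.

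Next I would apply Theorem~\ref{thm:rolling-to-tilted} together with the explicit construction in its proof: for $h$ large enough there is a hereditary algebra $H'$ (built as $H(\Sigma)$ for a suitable section $\Sigma$ in the Dynkin case, and analogously in the non-Dynkin case) such that $\rho^h(T)\in\mod H'[0]\subset\Der(H)$. The embedding $\mod H'[0]\hookrightarrow\Der(H)$ extends to a triangulated equivalence $\Der(H')\simeq\Der(H)$ preserving the Serre functor, hence also preserving $F=\tau^{-1}\circ[1]$, and consequently descending to an equivalence $\CluCat(H')\simeq\CluCat(H)$. Inside $\mod H'$, $\rho^h(T)$ is a tilting module: it has the correct number $n$ of indecomposable summands, and $\Ext^1_{H'}(\rho^h(T),\rho^h(T))=\HomD(\rho^h(T),\rho^h(T)[1])$ vanishes because $\rho^h(T)$ is a tilting complex. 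The classical theorem \cite{BMRRT} then says that $\rho^h(T)$ is a cluster-tilting object of $\CluCat(H')=\CluCat(H)$, and we conclude.

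The step that will require the most care is the identification $\CluCat(H')=\CluCat(H)$: one must verify that the derived equivalence coming from the inclusion $\mod H'[0]\hookrightarrow\Der(H)$ really commutes with the cluster autoequivalence $F$. This is automatic because any triangulated equivalence between bounded derived categories of algebras of finite global dimension commutes with the Serre functor, and hence with $\tau$ and with $F=\tau^{-1}\circ[1]$.
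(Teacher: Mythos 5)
Your argument is correct and follows essentially the same route as the paper: roll $T$ until Theorem \ref{thm:rolling-to-tilted} produces a tilted algebra, apply \cite[Thm.~3.3]{BMRRT} to the tilting module $\rho^h(T)$ over the hereditary algebra $H'$, and conclude via $T\simeq\rho^h(T)$ in $\CluCat$. (Your aside claiming that $T_i\simeq F^kT_j$ would contradict $\HomD(T,T[k])=0$ is not a valid deduction as stated, since $F^k$ involves $\tau^{-k}$ and not just a shift, but it is also unnecessary: being a cluster-tilting object depends only on the isomorphism class of $T$ in $\CluCat$, which you have already identified with that of $\rho^h(T)$.)
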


\begin{proof}
  By Theorem \ref{thm:rolling-to-tilted} there exists a number $h$
  such that $\rho^h(B)$ is a tilted algebra. 
  By \cite[Theorem 3.3]{BMRRT}, the object $\rho^h(T)$ defines a
  cluster-tilting object in $\CluCat$ and $C'=\EndC(\rho^h(T))$ is a cluster-tilted algebra. Since $T$ and
  $\rho^h(T)$ define isomorphic objects in $\CluCat$ {the result
  follows.}
\end{proof}

\subsection{Behaviour of the relation extensions under rolling}
\label{sec:rolling-rel-ext}

{Notice that for any} object $T$ of $\Der(H)$, the endomorphism algebra
$$
\EndC(T)=\bigoplus_{i\in \ZZ}\HomD(T,F^{i}T)
$$ is naturally $\ZZ$-graded and contains $B=\EndD(T)$ as a
subalgebra.  Recall from Section \ref{sec:tilt-basic} that if $T$ is a
tilting complex then we have canonically that $\Ext_B^2(\dual
B,B)\simeq\HomD(T,FT)$ with the natural structure of
$B$-$B$-bimodules. Therefore we get a canonical projection
$\pi(B)\colon \EndC(T)\ra \ealg(B)$ of vector spaces and it was proven
  in \cite[Lemma 3.3]{ABS} that $\pi(B)$ is in fact an algebra
  isomorphism when $T$ is a stalk complex concentrated in degree
  zero.  However, in general $\pi(B)$ will not be an algebra
homomorphism. Observe that if $\gldim B\leq 2$ then $\ealg(B)\simeq
\HomD(T,T)\oplus \HomD(T,FT)$.  The next result is straightforward.
 
\begin{lem}\label{lem:seq}
  If $\pi(B)$ is an algebra homomorphism  then 
  the sequence of homomorphisms of algebras
  \begin{equation}\label{eq:seq}
    B\xrightarrow{j} \EndC(T)\xrightarrow{\pi(B)} \ealg(B)\xrightarrow{p} B,
  \end{equation}
  is the identity map, where $j$ and $p$ are the canonical inclusion
  and projection maps respectively. In particular $\EndC(T)$ is a
  split extension of $B$. Moreover, the canonical graded inclusion
  $\delta(B)\colon \ealg(B)\ra C(B)$ is a homomorphism of
  $B$-$B$-bimodules and satisfies
  $\pi(B)\delta(B)=\id_{\ealg(B)}$.\proofend
\end{lem}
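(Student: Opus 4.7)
The plan is to unfold the definitions and verify the required compatibilities step by step. First I would observe that $j\colon B\to\EndC(T)$ is an algebra homomorphism because it identifies $B$ with the degree-zero summand $\HomD(T,T)$ of the $\ZZ$-graded algebra $\EndC(T)=\bigoplus_{i\in\ZZ}\HomD(T,F^iT)$, and composition in $\CluCat$ restricted to degree-zero morphisms coincides with composition in $\Der(H)$. Similarly, $p\colon\ealg(B)\to B$ is the canonical projection from the trivial extension $B\semidirprod\Ext_B^2(\dual B,B)$ to $B$, which is an algebra homomorphism by the very definition of the multiplication $(b,m)(b',m')=(bb',bm'+mb')$. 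The hypothesis provides that $\pi(B)$ is also an algebra homomorphism, and so $p\circ\pi(B)\circ j$ is an algebra homomorphism; to see that it coincides with $\id_B$, one observes that starting from $b\in B$, the map $j$ places $b$ in degree zero, $\pi(B)$ projects onto the degree $0$ and degree $1$ components (which form $\ealg(B)$ under the identifications $\HomD(T,T)=B$ and $\HomD(T,FT)\simeq\Ext_B^2(\dual B,B)$ recalled in Section 3.1), leaving $b$ in the $B$-summand, after which $p$ returns $b$.

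Next I would show that $\EndC(T)$ is a split extension of $B$. The algebra morphism $p\circ\pi(B)\colon\EndC(T)\to B$ is surjective with section $j$, so it suffices to prove that its kernel $M=\bigoplus_{i\neq 0}\HomD(T,F^iT)$ is a nilpotent ideal. This is immediate from the $\ZZ$-grading of $\EndC(T)$: writing $M_i=\HomD(T,F^iT)$ one has $M_i\cdot M_j\subseteq M_{i+j}$, and since $\EndC(T)$ is finite-dimensional only finitely many $M_i$ are nonzero, whence $M^N=0$ for $N$ sufficiently large.

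For the final assertion, $\delta(B)$ embeds $\ealg(B)=B\oplus\Ext_B^2(\dual B,B)$ into $C(B)=\EndC(T)$ as the degree $0$ and degree $1$ components via the identification $\Ext_B^2(\dual B,B)\simeq\HomD(T,FT)$. The bimodule compatibility is a direct computation: for $b\in B$ and $(b',m)\in\ealg(B)$, both $\delta(b\cdot(b',m))$ and $b\cdot\delta((b',m))$ equal $bb'+bm$ inside $C(B)$, because the left $B$-action on $\HomD(T,FT)$ given by composition in $\CluCat$ agrees with the canonical $B$-$B$-bimodule structure on $\Ext_B^2(\dual B,B)$; the right action is handled analogously. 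Finally, $\pi(B)\circ\delta(B)=\id_{\ealg(B)}$ holds because $\delta(B)$ lands in precisely the two components of $C(B)$ on which $\pi(B)$ acts as the identity projection. The entire argument is bookkeeping and I anticipate no real obstacle; the only non-formal input is the hypothesis that $\pi(B)$ is an algebra homomorphism.
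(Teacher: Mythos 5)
Your unfolding of the definitions is exactly the verification the paper leaves to the reader (the lemma is stated as ``straightforward'' with no written proof), and the parts concerning $j$, $p$, the composite being $\id_B$, and the assertions about $\delta(B)$ are correct and complete. The one step where your reasoning does not hold up as written is the nilpotency of the kernel $M=\bigoplus_{i\neq 0}\HomD(T,F^iT)$ of $p\circ\pi(B)$, which is genuinely needed, since the paper's definition of split extension requires the kernel to be a nilpotent ideal. From ``$M_iM_j\subseteq M_{i+j}$ and only finitely many $M_i$ are nonzero'' one cannot conclude $M^N=0$: if the support of $M$ contained both positive and negative degrees, the degrees of the factors in a long product could cancel and the partial sums could stay inside the (finite) support forever, so finiteness of the support alone gives no bound. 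Nothing available at this point of the paper rules this configuration out: Lemma \ref{lem:basic} only kills the components in degrees $-1$ and $-2$, not all negative degrees, so your argument is incomplete rather than merely terse.

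The gap is easy to close, and either of the following repairs works. (i) Note that $M$ is the kernel of the surjective algebra homomorphism $p\circ\pi(B)\colon \EndC(T)\ra B$ between basic finite-dimensional algebras having the same number $n$ of simple modules (both $T$ and its image under $G$ have $n$ pairwise non-isomorphic indecomposable summands); since a surjection maps the radical into the radical, it induces a surjection $\EndC(T)/\rad\EndC(T)\ra B/\rad B$ between semisimple algebras of the same dimension $n$, hence an isomorphism, so $M\subseteq\rad\EndC(T)$ and is therefore nilpotent. (ii) Alternatively, use the grading properly: $M$ is a graded ideal of the finite-dimensional graded algebra $\EndC(T)$ with $M\cap\HomD(T,T)=0$; the radical is a graded ideal, and in the (graded, semisimple) quotient by the radical the image of $M$ is a two-sided graded ideal generated by a central idempotent which must be fixed by the $k^*$-action defining the grading, hence lies in degree zero, hence is zero; so again $M\subseteq\rad\EndC(T)$. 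With either argument inserted, your proof is complete and coincides with the intended routine verification.
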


The next result shows that the relation extensions are closely related
under rolling.

\begin{prop}\label{prop:ind-ealg}
  Let $T$ be a tilting complex in $\Der(H)$ such that its endomorphism
  algebra $B$ satisfies $\gldim B\leq 2$.  Let $\oT=\rho(T)$ and
  $\oB=\rho(B)$.  Then there exists a canonical algebra homomorphism
  $\Theta\colon \ealg(\oB)\ra \ealg(B)$ which is surjective and whose
  kernel is contained in $\rad^2 \ealg(\oB)$. Furthermore, $\Theta$
  and the canonical isomorphism $\Psi\colon \EndC(\oT)\ra \EndC(T)$
  commute with the projections, that is, $\Theta\pi(\oB) = \pi(B)
  \Psi$. Moreover, if $\pi(\oB)$ is an algebra homomorphism then also
  $\pi(B)$ is an algebra homomorphism.
\end{prop}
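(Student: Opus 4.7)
The plan is to define $\Theta$ as the composition
$$
\Theta=\pi(B)\circ\Psi\circ\delta(\oB)\colon \ealg(\oB)\lra\ealg(B),
$$
and to deduce each claimed property from a careful analysis of how $\Psi$ interacts with the two $\ZZ$-gradings. Writing $E_j=\HomD(T,F^jT)$ and $\oE_i=\HomD(\oT,F^i\oT)$ and decomposing both according to $T=T'\oplus X$ and $\oT=T'\oplus F^{-1}X$, one checks that $\Psi$ canonically matches the blocks $\HomD(T',F^jT')$ and $\HomD(X,F^jX)$ of $\oE_j$ to the same blocks of $E_j$, and matches the block $\HomD(T',F^jX)$ of $\oE_{j+1}$ (resp.\ $\HomD(X,F^jT')$ of $\oE_{j-1}$) with the block $\HomD(T',F^jX)$ (resp.\ $\HomD(X,F^jT')$) of $E_j$.

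The central technical step is to verify that $\pi(B)\Psi$ vanishes on $\ker\pi(\oB)=\bigoplus_{i\neq 0,1}\oE_i$, which makes $\Theta$ well-defined and automatically forces the identity $\Theta\pi(\oB)=\pi(B)\Psi$. From the shift table, the only blocks of $\ker\pi(\oB)$ whose $\Psi$-image can land in $E_0\oplus E_1=\ealg(B)$ are $\HomD(X,T')\subset \oE_{-1}$ (going into $E_0$) and $\HomD(T',FX)\subset \oE_{2}$ (going into $E_1$); every other block of $\oE_i$ with $i\neq 0,1$ maps into some $E_j$ with $j\notin\{0,1\}$ and is killed by $\pi(B)$. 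The first is zero by Remark \ref{rem:hom-X-T'}. The second vanishes in both cases: Serre duality gives $\HomD(T',FX)\cong\dual\HomD(X,\tau^2T')$, and in the Dynkin case Lemma \ref{lem:nuevo} yields $T'<\tau\Sigma(T)$, hence $\tau^2T'<\tau^3\Sigma(T)<\Sigma(T)$; since the summands of $X$ lie on $\Sigma(T)$ and the Auslander--Reiten quiver of the Dynkin derived category is acyclic, there is no path $X\to\tau^2T'$ and the Hom is zero. In the non-Dynkin case $T'\in\bigcup_{s<m}\Rr[s]$ and $X\in\Rr[m]$, so $FX\in\Rr[m+1]$ and $\Hom(\Rr[s],\Rr[m+1])=0$ for $s<m$. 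Surjectivity of $\Theta$ and the identification $\ker\Theta=\HomD(X,F^2T')\subset\oE_1$ now fall out from the same piece-wise analysis, using additionally that the potentially problematic block $\HomD(T',F^{-1}X)\subset\oE_0$ already vanishes by Lemma \ref{lem:basic}.

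The algebra-homomorphism property of $\Theta$ will then follow from three ingredients: $\Psi$ is an algebra isomorphism; the trivial-extension product in $\ealg(\oB)$ (resp.\ in $\ealg(B)$) agrees with $\pi(\oB)$ (resp.\ $\pi(B)$) applied to the $\EndC$-composition of $\delta$-lifts; and, writing $\Psi\delta(\oB)(a)=\beta+\beta_2$ with $\beta\in E_0\oplus E_1$ and $\beta_2\in E_2$ (possible since $\Psi\delta(\oB)(\ealg(\oB))\subseteq E_0\oplus E_1\oplus E_2$), all cross terms in the product of two such expressions land in degrees $\geq 2$ and are discarded by $\pi(B)$. The final assertion, that $\pi(B)$ is an algebra homomorphism whenever $\pi(\oB)$ is, then follows immediately from $\pi(B)=\Theta\circ\pi(\oB)\circ\Psi^{-1}$, a composition of algebra homomorphisms.

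The main obstacle is showing $\ker\Theta=\HomD(X,F^2T')\subseteq\rad^2\ealg(\oB)$. My plan is to factor every $f\in\HomD(F^{-1}X,FT')$ as $f=g\circ h$ through a summand $Y$ of $\oT$, with one of $g,h$ in $\rad\oB$ and the other in $\Ext^2_{\oB}$, so that $f\in(\rad\ealg(\oB))^2\subseteq\rad^2\ealg(\oB)$. The natural source for such factorizations is the minimal projective resolution of the injective $\oB$-module $\inj_{F^{-1}X,\oT}\simeq G(X)$: since $\HomD(X,F^2T')\simeq\Ext^2_{\oB}(\inj_{F^{-1}X,\oT},\proj_{T',\oT})$ is the cokernel coming from the second differential and the terms of the resolution are direct sums of projectives $\proj_{\oT_k,\oT}$, these supply intermediate summands of $\oT$ through which $f$ factors. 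This is the trickiest step, as it rests on the fine structure of morphisms between indecomposable summands of $\oT$ and their positions in the Auslander--Reiten quiver of $\Der(H)$.
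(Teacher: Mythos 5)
Your construction of $\Theta$ coincides with the paper's: the composite $\pi(B)\circ\Psi\circ\delta(\oB)$ is exactly the four block maps the paper writes down, and your degree bookkeeping for $\Psi$ is correct. Where you diverge you actually add value: you prove the vanishing $\HomD(T',FX)=0$ (Serre duality plus the position of $X$ on $\Sigma(T)$ in the Dynkin case, the ordering of the parts $\Rr[r]$ otherwise), a fact the paper only asserts when claiming surjectivity of the block maps; and you deduce the final assertion from $\pi(B)=\Theta\,\pi(\oB)\,\Psi^{-1}$ once $\Theta$ is known to be multiplicative, whereas the paper verifies multiplicativity of $\pi(B)$ directly by a case analysis of sixteen compositions. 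Your outline of the multiplicativity of $\Theta$ (cross terms land in degrees $\geq 2$, and $\pi(B)\Psi$ annihilates $\bigoplus_{i\neq 0,1}\oE_i$) is sound, although it should be written out in full.

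The genuine gap is the containment $\Ker\Theta\subseteq\rad^2\ealg(\oB)$, which you leave as a plan. Asserting that every $f\in\HomD(F^{-1}X,FT')$ can be factored through a summand of $\oT$ with one factor in $\rad\oB$ and the other in the $\Ext^2$-part is precisely the assertion that $f$ dies in the top of the bimodule $\Ext^2_{\oB}(\dual\oB,\oB)$; the minimal projective resolution of $\inj_{F^{-1}X,\oT}$ computes the group $\Ext^2_{\oB}(\inj_{F^{-1}X,\oT},\proj_{T',\oT})$ but does not by itself supply such a factorization, so your plan presupposes what has to be proved. The missing mechanism, which is the heart of the paper's proof, is this: by \cite[Section 2.4]{ABS} one has $\TOP\Ext^2_{\oB}(\dual\oB,\oB)=\Ext^2_{\oB}(\soc\dual\oB,\TOP\oB)$, so it suffices to show $\Ext^2_{\oB}(S_i,S_j)=0$ whenever $S_i$ is a summand of $\soc\inj_{F^{-1}X,\oT}$ and $S_j$ of $\TOP\proj_{T',\oT}$; a nonzero such class produces, via the minimal projective resolution of $S_i$, a chain of nonzero morphisms between indecomposable projective $\oB$-modules in which one morphism must pass between the $T'$-projectives and the $F^{-1}X$-projectives, contradicting the vanishing of the corresponding Hom-spaces ($\HomD(T',F^{-1}X)=0$ by Lemma \ref{lem:basic}, resp.\ $\HomD(X,T')=0$). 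Without this (or an equivalent argument) the key radical-square property of $\Ker\Theta$ — and hence the assertion $\Ker\pi\subseteq\rad^2 C$ in Theorem \ref{thm:split} that rests on it — remains unproved in your proposal.
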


\begin{proof}
  The canonical isomorphism $\Psi\colon\EndC(\oT)\ra\EndC(T)$ is given by
  the direct sum of the following bijective maps
  \begin{gather*}
  \id\colon \EndC(T')\ra\EndC(T'),\quad \quad
  \sigma^{-1}\colon \HomC(T',F^{-1} X)\ra \HomC(T',X),\\ 
  \sigma F\colon \HomC(F^{-1} X,T')\ra \HomC(X,T'),\quad\quad
  F\colon \EndC(F^{-1} X)\ra \EndC(X),
  \end{gather*}
  where $\sigma$ denotes the shift in the $\ZZ$-graduation, that is
  $$ 
  \sigma\colon\bigoplus\nolimits_{i\in\ZZ}(Y,F^i Z)\ra
  \bigoplus\nolimits_{i\in\ZZ}(Y,F^{i+1} Z),(f_i)_{i\in\ZZ}\mapsto
  (f_{i+1})_{i\in\ZZ},
  $$
  where we abbreviated again  $(Y,Z)=\HomD(Y,Z)$, as we shall do also
  in the forthcoming.
  Now, $\Theta\colon \ealg(\oB)\ra \ealg(B)$ is defined by the
  following four maps.
  \begin{align}
    \label{eq:it-map-1}
    \id\colon& (T',T')\oplus (T',FT') \ra (T',T')\oplus (T',FT')\\
    \label{eq:it-map-2}
    \left[\begin{matrix}0& \id\\ 0& 0
        \end{matrix}\right]\colon&
    (T',F^{-1}X)\oplus (T',X)\ra (T',X)\oplus (T',FX)\\
    \label{eq:it-map-3}
    \left[\begin{matrix}0& 0\\ F &0
        \end{matrix}\right]\colon&
    (F^{-1}X,T')\oplus(F^{-1}X,FT')\ra (X,T')\oplus (X,FT')\\
    \label{eq:it-map-4}
    F\colon& (F^{-1}X,F^{-1}X)\oplus (F^{-1}X,X)\ra (X,X)\oplus(X,FX).
  \end{align}
  Since by hypothesis $\HomD(T',F X)=0$, resp. $\HomD(X,T')=0$, the
  maps in \eqref{eq:it-map-2}, resp. \eqref{eq:it-map-3} are
  surjective. Therefore the map $\Theta$ is
  surjective and $\Theta \pi(\oB)=\pi(B)\Psi$.

  Now, the kernel of $\Theta$ is clearly $\HomD(T',F^{-1}X)\oplus
  \HomD(F^{-1} X, F T')$, but by Lemma \ref{lem:basic} the first
  summand is zero. We have $\HomD(F^{-1} X, F T')=\HomD(\tau
  (F^{-1}X)[1],T'[2])\simeq
  \Ext_{\oB}^{2}(\inj_{F^{-1}X,\oT},\proj_{T',\oT})$ since $\oT$ is a
  tilting complex. We will show that the last term is contained in the
  radical of $\Ext^2_{\oB}(\dual \oB,\oB)$. By \cite[Section 2.4]{ABS} we
  have $\TOP \Ext^2_{\oB}(\dual \oB,\oB)=\Ext^2_{\oB}(\soc\dual
  \oB,\TOP \oB)$. Hence it suffices to prove that
  $\Ext^2_{\oB}(S_i,S_j)=0$ for all indecomposable simples $S_i$,
  resp. $S_j$, which are direct summands of $\soc \inj_{F^{-1}X,\oT}$,
  resp. $\TOP\proj_{T',\oT}$.

  Suppose the contrary, that is, there exist such summands $S_i$ and
  $S_j$ with $\Ext^2_{\oB}(S_i,S_j)\neq 0$. Let $0\ra Q_2\ra Q_1\ra
  P_i\ra S_i$ be the projective resolution in $\mod \oB$ of
  $S_i$ and $\varphi\colon Q_2\ra S_j$ some morphism defining a non-zero
  element of $\Ext_{\oB}^2(S_i,S_j)$. This shows that some direct summand of
  $Q_2$ is isomorphic to $P_j$ and hence we get a sequence
  $$
  P_j\ra Q'\ra P_i
  $$ 
  of non-zero maps between indecomposable projective $\oB$-modules.
  One of these non-zero morphisms then must map from a summand of
  $\proj_{F^{-1}X,\oT}$ to a summand of $\proj_{T',T}$. This contradicts
  the fact that
  $\Hom_{\oB}(\proj_{F^{-1}X,\oT},\proj_{T',T})=\HomD(X,T')$ equals
  zero.

  It remains to see that if $\pi(\oB)$ is an algebra homomorphism then
  also $\pi(B)$ is an algebra homomorphism. That is, we suppose that
  for all $j\neq 0,1$ and all morphisms 
  \begin{equation}\label{eq:ind-hyp}
  \oT\xrightarrow{f}F^j \oT \xrightarrow{g} \oT\oplus F\oT
  \end{equation}
  the composition $gf$ is zero and have to show that for all
  $h\neq 0,1$ and all morphisms
  $T\xrightarrow{f'}F^h T \xrightarrow{g'} T\oplus F T$
  the composition $g'f'$ is zero. For this we consider $16$
  different combinations:   for $A,B\in\{T',X\}$ and
  $C\in\{T',X,FT',FX\}$, we consider the compositions
  \begin{equation}\label{eq:to-show-2}
    A\xrightarrow{f'}F^h B\xrightarrow{g'} C
  \end{equation}
  for $h\neq 0,1$.  For some of the combinations, the proof that
  $g'f'=0$ is straightfoward using \eqref{eq:ind-hyp}, as for instance
  if $A=B=T'$ and $C=T',X,FT'$.  Also, by hypothesis there is nothing
  to show if $(A,C)$ equals $(X,T')$ or $(T',FX)$. The remaining
  combinations are then divided in two cases:
  \begin{itemize}
  \item[(a)] $A=T'$, $B=X$ and $C\in\{T',X,FT'\}$
  \item[(b)] $A=X$, $B\in\{T',X\}$ and $C\in\{X,FT',FX\}$
  \end{itemize}
  Let $j=h-1$. In case (a) observe that by \eqref{eq:ind-hyp} the
  composition \eqref{eq:to-show-2} holds for all $h\geq 3$ and all
  $h<0$. In case (b), apply $F^{-1}$ to \eqref{eq:to-show-2}, in order
  to see that again the composition is zero if $h\geq 3$ or $h<0$. So
  it only remains to consider the case where $h=2$. In any case $g'=0$
  by Lemma \ref{lem:basic}. This finishes the proof  of the proposition.
\end{proof}

We prove now Theorem \ref{thm:split}, stated in the introduction.  See
also \cite[4.17]{A} for a different proof of the last assertion of the
theorem, relating the quivers of $C$ and $\ealg(B)$.

\renewcommand{\thedefthm}{\ref{thm:split}}
\begin{defthm}
  If $B$ is an iterated tilted algebra of $\gldim B\leq 2$ then there
  exists a cluster-tilted algebra $C$ which is a split extension of
  $B$. More precisely, if $B=\EndD(T)$ with $H$ a hereditary algebra
  and $T$ is a tilting complex in $\Der(H)$ then
  $C=\End_{\CluCat(H)}(T)$ is a cluster-tilted algebra and there
  exists a sequence of algebra homomorphisms
  $$
  B\ra C\xrightarrow{\pi}\ealg(B)\ra B
  $$
  whose composition is the identity map. Moreover, the kernel of $\pi$
  is contained in $\rad^2 C$. In particular $C$ and $\ealg(B)$ have
  the same quivers and are both split extensions of $B$.
\end{defthm}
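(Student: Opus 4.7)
My plan is to prove the theorem by induction on the least integer $h\geq 0$ such that $\rho^h(B)$ is tilted; the existence of such an $h$ is guaranteed by Theorem \ref{thm:rolling-to-tilted}, which is the engine that drives the whole argument. To set the stage, I first note that by the preceding corollary, $T$ defines a cluster-tilting object in $\CluCat$ and $C=\EndC(T)$ is a cluster-tilted algebra, so it remains only to construct the desired sequence of algebra homomorphisms and to control the kernel of $\pi$.

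For the first main task, namely showing that $\pi:=\pi(B)\colon C\ra \ealg(B)$ is an algebra homomorphism, I argue by downward induction on the number of rollings. In the base case, $B$ is tilted and $T$ may be chosen as a tilting module concentrated in degree zero over some hereditary algebra; then by \cite[Lemma 3.3]{ABS} the projection $\pi(B)$ is an algebra isomorphism. For the inductive step, $\rho(B)$ reaches a tilted algebra in fewer rollings, so by the inductive hypothesis $\pi(\rho(B))$ is an algebra homomorphism, and the final clause of Proposition \ref{prop:ind-ealg} transfers this property to $\pi(B)$. Once $\pi$ is known to be an algebra homomorphism, Lemma \ref{lem:seq} immediately delivers the split sequence $B\ra C\xrightarrow{\pi}\ealg(B)\ra B$ with composition $\id_B$.

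For the kernel estimate $\ker\pi\subseteq\rad^2 C$ I run the same induction, and I expect this to be the main obstacle. The base case is trivial since $\pi$ is then an isomorphism. For the inductive step, writing $\oB=\rho(B)$ and $\oT=\rho(T)$, Proposition \ref{prop:ind-ealg} supplies the commutativity $\pi(B)\Psi=\Theta\pi(\oB)$ with $\Psi$ an algebra isomorphism, $\Theta$ surjective, and $\ker\Theta\subseteq\rad^2\ealg(\oB)$. Combining the inductive hypothesis $\ker\pi(\oB)\subseteq\rad^2\EndC(\oT)$ with the surjectivity of the algebra homomorphism $\pi(\oB)$ and the nilpotency of its kernel yields the crucial identity $\pi(\oB)^{-1}(\rad^2\ealg(\oB))=\rad^2\EndC(\oT)$; a short diagram chase then gives $\ker\pi(B)=\Psi\bigl(\pi(\oB)^{-1}(\ker\Theta)\bigr)\subseteq\Psi(\rad^2\EndC(\oT))=\rad^2 C$, closing the induction.

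The last assertions drop out at once: a surjective algebra homomorphism with kernel in $\rad^2$ induces an isomorphism on $\rad/\rad^2$, hence $C$ and $\ealg(B)$ have identical quivers, and both are split extensions of $B$ thanks to the sections already built into the displayed sequence and into the canonical inclusion $B\hookrightarrow\ealg(B)$. The delicate point is that the algebra-homomorphism property and the radicality of the kernel have to be propagated \emph{together} through the rolling induction: the radical estimate for $\ker\Theta$ in Proposition \ref{prop:ind-ealg} is what makes the inductive bookkeeping possible, but it can only be exploited after we know that $\pi(\oB)$ is already a surjective algebra map.
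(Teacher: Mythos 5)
Your proposal is correct and follows essentially the same route as the paper: roll $T$ until $\rho^h(B)$ is tilted (Theorem \ref{thm:rolling-to-tilted}), invoke the result of Assem--Br\"ustle--Schiffler to get that $\pi$ is an isomorphism at the tilted stage, propagate the algebra-homomorphism property and the kernel estimate back down via Proposition \ref{prop:ind-ealg}, and conclude with Lemma \ref{lem:seq}. The only difference is presentational: you phrase the descent as an induction with the explicit identity $\pi(\oB)^{-1}(\rad^2\ealg(\oB))=\rad^2\EndC(\oT)$, whereas the paper writes $\pi(B)$ as $\Theta_1\cdots\Theta_h$ composed with isomorphisms and reads off the same kernel containment from $\Ker\Theta_i\subseteq\rad^2\ealg(\rho^i(B))$.
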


\begin{proof}
  Let $H$ be a hereditary algebra with quiver $Q$ and 
  $T$ be a tilting complex in $\Der(H)$ such that $B=\EndD(T)$.

  We already know from Theorem \ref{thm:rolling-to-tilted} that for
  sufficiently large $h$ the algebra $\rho^h(B)$ is tilted of type $Q$
  and $C=\EndC(\rho^h(T))$ is cluster-tilted. It follows now from
  \cite[Thm. 3.4]{ABS} that $\pi(\rho^h(B))\colon C\ra \ealg(\rho^h(B))$ is
  an isomorphism.
  Hence by Proposition \ref{prop:ind-ealg} we get inductively for
  $i=h-1,h-2,\ldots,1$ that the projection $\pi(\rho^i(B))$ is an
  algebra homomorphism and $\Theta_i\colon \ealg(\rho^i(B))\ra
  \ealg(\rho^{i-1}(B))$ is surjective with kernel contained in $\rad^2
  \ealg(\rho^i(B))$. Therefore the same holds for the composition
  $\Theta_1\Theta_2\cdots\Theta_h$. Thus $\pi(B)\colon C \rightarrow
    \ealg(B)$ has the same property, because it is obtained from
    $\Theta_1\Theta_2\cdots\Theta_h$ by composing with isomorphisms,
    as follows by repeated application of Proposition
    \ref{prop:ind-ealg} and using that $\pi(\rho^h(B))$ is an
    isomorphism.  In particular $\ealg(B)$ has the same quiver (up to
  isomorphism) as $C$.  By Lemma \ref{lem:seq} the composition of the
  morphisms $B\ra
  \EndC(T)\xrightarrow{\pi(B)} \ealg(B)\ra B$ is the identity map and
  consequently both algebras $C$ and $\ealg(B)$ are split extensions
  of $B$.
\end{proof}

\begin{defn}
  For an iterated tilted algebra $B$ with $\gldim B\leq 2$ choose a
  hereditary algebra $H$ and tilting complex $T$ in $\Der(H)$ with
  $B=\EndD(T)$. We then define $C(B)$ to be the cluster-tilted algebra
  $\EndC(T)$. \end{defn}
We notice that $C(B)\simeq C(\rho(B))$ because $\rho(T) \simeq T$ in
the cluster category $\mathcal C$, so $C(B) \simeq \ealg(\rho^h(B))$
for any $h$ such that $ \rho^h(B)$ is tilted. Such $h$ always exists,
by Theorem \ref{thm:rolling-to-tilted}, and $\rho (B)$ does not depend
on the choices of $H$ and $T$, as observed after Definition
\ref{rolling of B}. It follows that also $C(B)$ is uniquely defined up
to isomorphism independently of the choices of $H$ and $T$.

\begin{prop}
  \label{prop:Ker}
  For each iterated tilted algebra $B$ with $\gldim B\leq 2$ there are
  presentations of the algebras $B$, $\ealg(B)$ and $C(B)$ in which 
  $\{\alpha_1,\ldots,\alpha_r\}$ are the arrows of $B$ and
  $\{\alpha_1,\ldots,\alpha_r,\eta_1,\ldots,\eta_s\}$ are the arrows
  of $\ealg(B)$ and of $C(B)$. Then $\Ker
  \pi(B)=\gen{\eta_1,\ldots,\eta_s}^2$.
\end{prop}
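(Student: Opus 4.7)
The plan is to exploit the natural $\ZZ$-grading on $C(B) = \EndC(T) = \bigoplus_{i \in \ZZ} \HomD(T, F^i T)$ together with the description of $\pi(B)$ as the canonical projection onto the degree-$0$ and degree-$1$ components.

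For the compatible presentations, both $C(B)$ and $\ealg(B)$ are, by Theorem \ref{thm:split}, split extensions of $B$ with a common quiver, so the arrows $\alpha_1,\ldots,\alpha_r$ of $B$ embed as arrows of each via the split embeddings. The remaining arrows $\eta_1,\ldots,\eta_s$ of $\ealg(B)$ form a set of $B$-bimodule generators of $M = \Ext^2_B(\dual B, B)$ whose classes modulo $\rad B \cdot M + M \cdot \rad B$ are linearly independent. Using the canonical $B$-bimodule isomorphism $M \simeq \HomD(T, FT) = C(B)_1$ recalled in Section \ref{sec:tilt-basic}, I would lift each $\eta_j$ to the degree-$1$ part $C(B)_1$ of $C(B)$. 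That these lifts serve as the remaining arrows of $C(B)$ follows because $\pi(B)$ has kernel contained in $\rad^2 C(B)$, hence induces an isomorphism $\rad C(B)/\rad^2 C(B) \to \rad \ealg(B)/\rad^2 \ealg(B)$.

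With the $\alpha_i$ in $C(B)_0 = B$, the $\eta_j$ in $C(B)_1$, and the vertex idempotents also in $C(B)_0$, the algebra $C(B)$ is generated as a $\field$-algebra in non-negative degrees; hence $C(B) = \bigoplus_{i \geq 0} C(B)_i$. Since $\pi(B)$ is by construction the projection of vector spaces onto $C(B)_0 \oplus C(B)_1 \simeq \ealg(B)$, we obtain $\Ker \pi(B) = \bigoplus_{i \geq 2} C(B)_i$. To identify this kernel with $\langle \eta_1,\ldots,\eta_s\rangle^2$, observe that every monomial in the arrows of $C(B)$ of degree $\geq 2$ necessarily contains at least two $\eta_j$-factors, placing it in $\langle \eta_1,\ldots,\eta_s\rangle^2$; conversely this ideal is generated by the elements $\eta_j\eta_{j'} \in C(B)_2$ together with two-sided multiplication by $C(B) = \bigoplus_{i \geq 0} C(B)_i$, so it remains inside $\bigoplus_{i \geq 2} C(B)_i$ by non-negativity of the grading.

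The main obstacle is the combined requirement on the $\eta_j$'s: they must simultaneously lie in $C(B)_1$ and serve as the non-$\alpha$ arrows of $C(B)$. This rests on the canonical bimodule identification $C(B)_1 \simeq \Ext^2_B(\dual B, B)$ of Section \ref{sec:tilt-basic} together with the bijectivity of $\pi(B)$ on $\rad/\rad^2$ supplied by Theorem \ref{thm:split}.
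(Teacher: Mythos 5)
Your proposal is correct and takes essentially the same route as the paper: there, too, the $\alpha_i$ are placed in degree $0$ and the $\eta_j$ in degree $1$ of $C(B)$ (via the graded inclusion $\delta(B)$), and the kernel is computed by writing an element of $\Ker\pi(B)$ as $a+\sum_i b_i\eta_i c_i+h$ with $h\in\gen{\eta_1,\ldots,\eta_s}^2$ and using that $\pi(B)$ restricts to the identity on $\HomD(T,T)\oplus\HomD(T,FT)$. Your graded packaging (non-negativity of the grading, $\Ker\pi(B)=\bigoplus_{i\geq 2}C(B)_i=\gen{\eta_1,\ldots,\eta_s}^2$ by counting $\eta$-factors) is just a reformulation of that same decomposition.
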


\begin{proof}
  To get the desired presentations one can take a suitable basis of $\rad
  B/\rad^2 B$ and of $\Ext_B^2(\dual B,B)/\rad\Ext_B^2(\dual
  B,B)$. The map $\delta(B)\colon\ealg(B)\ra C(B)$ induces the
  identity on $B$ and satisfies $\delta(B)(\eta_i)=\eta_i$. To
  simplify the notation, we shall
  write $\pi$ and $\delta$ instead of $\pi(B)$ and $\delta(B)$,
  respectively. 

  It
  follows from the definition of the multiplication in $\ealg(B)$ that
  $\gen{\eta_1,\ldots,\eta_s}^2\subseteq \Ker\pi$. By
  Theorem~\ref{thm:split} we have
  $\Ker \pi\subseteq\gen{\alpha_1,\ldots,\alpha_r,\eta_1,\ldots,\eta_s}^2 $.

  Let $z\in \Ker\pi$, say 
  $$
  z=a+\sum_{i=1}^{s} b_i \eta_i c_i+h
  $$
  with $a, b_i$, $c_i\in B$ and $h\in
  \gen{\eta_1,\ldots,\eta_s}^2$. Hence by the above, $h\in\Ker \pi$
  and consequently 
  $$
  y\colon=z-h=a+\sum_{i=1}^s b_i \eta_i c_i
  $$
  belongs to $\Ker \pi$ and also to $\HomD(T,T)\oplus \HomD(T,FT)$, a
  space to which the map $\pi$ restricts as the identity. Hence
  $\pi(y)=0$ implies $y=0$ and consequently
  $z\in\gen{\eta_1,\ldots,\eta_s}{^2}$.
\end{proof}

We observe that though the algebras $C(B)$ and $\ealg(B)$ have the
same quiver, they are in general not isomorphic, not even in the
Dynkin case, as will be shown in Rk.4.20.

\begin{remark}
  Theorem \ref{thm:split} together with \cite[Thm. 2.13]{BMR3} and the
  classification given in \cite{AS2} can be used as
  criteria for discarding an algebra of being iterated tilted, see
  Remark \ref{rem:B-is-cut-of-R(B)} for an example.
\end{remark}

\begin{cor}
  If $B$ is an iterated tilted algebra of Dynkin type with $\gldim
  B\leq 2$ then $\ealg(B)$ is of finite representation type.
\end{cor}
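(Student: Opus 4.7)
The plan is to realise $\ealg(B)$ as a quotient of a representation-finite algebra, namely the cluster-tilted algebra $C(B)$, and deduce representation-finiteness from there.

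First, I would apply Theorem \ref{thm:split} to $B$. Since $B$ is iterated tilted with $\gldim B\le 2$ and of Dynkin type $Q$, there is a cluster-tilted algebra $C=C(B)$ of type $Q$ together with a sequence of algebra homomorphisms
$$
B\ra C\xrightarrow{\pi}\ealg(B)\ra B
$$
whose composition is the identity. In particular $\pi=\pi(B)$ is a surjective algebra homomorphism (it admits the canonical inclusion $\delta(B)\colon\ealg(B)\ra C$ as a right inverse, by Lemma \ref{lem:seq}), so $\ealg(B)\simeq C/\Ker\pi$.

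Second, I would invoke the well-known fact that cluster-tilted algebras of Dynkin type are representation-finite. Indeed, if $H=\field Q$ with $Q$ Dynkin, the AR-quiver of $\Der(H)$ is $\ZZ Q$, and taking the quotient by $F^{\ZZ}$ yields a cluster category $\CluCat(H)$ with only finitely many indecomposable objects; consequently, the endomorphism algebra of any object of $\CluCat(H)$—and in particular of the cluster-tilting object $T$ realising $C=\EndC(T)$—has finite representation type (see \cite{BMRRT,BMR2}).

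Finally, since $\ealg(B)$ is a quotient of $C$, restriction of scalars along $\pi$ gives a fully faithful embedding $\mod\ealg(B)\hookrightarrow \mod C$, identifying $\mod\ealg(B)$ with the full subcategory of $\mod C$ of modules annihilated by $\Ker\pi$. Thus the number of isomorphism classes of indecomposable $\ealg(B)$-modules is bounded by that of $C$, which is finite. This yields the claim.

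The only non-routine input is the representation-finiteness of cluster-tilted algebras of Dynkin type; everything else follows directly from Theorem \ref{thm:split} and formal properties of quotient algebras.
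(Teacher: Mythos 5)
Your proposal is correct and follows essentially the same route as the paper: $\ealg(B)$ is a quotient of the cluster-tilted algebra $C(B)$ via the surjection $\pi$ of Theorem \ref{thm:split}, and cluster-tilted algebras of Dynkin type are of finite representation type (the paper simply cites this, as you do), so the quotient is as well. Your interpolated explanation of the representation-finiteness of $C(B)$ is a bit loose on its own (finitely many indecomposables in $\CluCat$ does not by itself bound the indecomposable $\End_{\CluCat}(T)$-modules without the equivalence $\CluCat/\add(\tau T)\simeq\mod\End_{\CluCat}(T)$), but since you invoke the known result from the cited references, this does not affect the argument.
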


\begin{proof}
  By \cite[Cor. 2.4]{BMR3} the cluster-tilted algebra $C(B)$ is of finite
  representation type. Hence so is $\ealg(B)$ being a quotient of $C(B)$.
\end{proof}

\section{Admissible cuts of cluster-tilted algebras of Dynkin type}
\label{sec:adm-cut}
\subsection{Cluster-tilted algebras of Dynkin type}
\label{sec:rel-Dynkin}
We now want to give a more combinatorial description of the
relationship between an iterated tilted algebra $B$ with $\gldim B\leq
2$, its relation extension $\ealg(B)$ and the corresponding cluster-tilted algebra $C(B)$ in the case where these algebras are of
finite representation type. 

Recall from \cite{BMR} that the quivers of the cluster-tilted algebras
arising from a given cluster category are exactly the quivers
corresponding to the exchange matrices of the associated cluster
algebra. The following result follows therefore from \cite[Thm. 1.8
and Lemma 7.5]{FZ2}

\begin{prop}
  \label{prop:cyc-oriented}
  Each chordless cycle in the quiver $Q_C$ of a cluster-tilted algebra
  $C$ is oriented.
\end{prop}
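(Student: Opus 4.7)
The plan is to translate the statement into a combinatorial property of the exchange matrix of the associated cluster algebra. As recalled in the paragraph preceding the proposition (citing \cite{BMR}), the quiver $Q_C$ of any cluster-tilted algebra $C$ arising from the cluster category $\CluCat(H)$ coincides with the quiver of an exchange matrix in the mutation class determined by $H$. It therefore suffices to establish the desired orientation property for quivers appearing in such a mutation class.

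To carry this out I would simply invoke \cite[Thm.~1.8]{FZ2} together with \cite[Lemma~7.5]{FZ2}: Theorem~1.8 describes the ambient mutation class, while Lemma~7.5 records the key combinatorial fact that, in any quiver obtained from such a mutation class, every chordless cycle is oriented. Putting the two together yields the proposition without further work.

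The main obstacle, if one exists, is not mathematical but organizational. One must verify that the identification between quivers of cluster-tilted algebras from $\CluCat(H)$ and exchange quivers of the associated cluster algebra is literal (i.e.\ an equality of quivers, up to isomorphism) and not merely a correspondence of mutation classes. This is exactly the content of the theorem of Buan--Marsh--Reiten from \cite{BMR3} quoted in the Introduction, which says that cluster-tilting mutation corresponds precisely to seed mutation. Once this is explicit, the proposition is a direct citation of \cite{FZ2}.
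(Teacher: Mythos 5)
Your proposal is correct and follows essentially the same route as the paper, which likewise proves nothing beyond quoting the identification of quivers of cluster-tilted algebras with exchange-matrix quivers (via \cite{BMR}, \cite{BMR3}) and then citing \cite[Thm.~1.8 and Lemma~7.5]{FZ2}. As in the paper, this citation only covers the finite (Dynkin) type setting, which is the context in which the proposition is actually used.
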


Also the following result, proven in \cite[Prop. 1.4]{BMR} will
be useful.

\begin{thm}
  \label{thm:idempotent-quotient}
  Let $C$ be a cluster-tilted algebra and $e$ an idempotent of
  $C$. Then $C/CeC$ is again a cluster-tilted algebra.
\end{thm}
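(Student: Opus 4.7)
The plan is to reduce to the case of a primitive idempotent and then realize the quotient as the endomorphism algebra of a cluster-tilting object in a smaller cluster category.

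Since $C/CeC$ depends only on the conjugacy class of $e$, and since every idempotent of a basic finite-dimensional algebra is conjugate to a sum of pairwise orthogonal primitive idempotents, we may write $e = \sum_{i \in I} e_i$ with the $e_i$ primitive and orthogonal. Iterating the single-idempotent case then reduces the problem to $|I| = 1$. So we fix $C = \EndC(T)$ for $T = T_i \oplus T'$ a basic cluster-tilting object in $\CluCat = \CluCat(H)$, where $H$ has $n$ pairwise non-isomorphic simple modules, and $e = e_i$ is the primitive idempotent corresponding to the indecomposable summand $T_i$.

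The crux is to produce a triangulated subcategory $\mathcal{D} \subseteq \CluCat$ which is triangle-equivalent to $\CluCat(H')$ for some hereditary algebra $H'$ with $n-1$ simple modules, containing $T'$ as a cluster-tilting object. The natural candidate for $\mathcal{D}$ is the perpendicular subcategory of the rigid object $T_i$. Lifting to the derived category, the Geigle--Lenzing perpendicular calculus implies that the perpendicular subcategory of a rigid object in $\Der(H)$ is itself the derived category of a hereditary algebra $H'$ with one fewer simple module; by compatibility of this construction with the orbit functor $F = \tau^{-1} \circ [1]$, the corresponding subcategory of $\CluCat$ is equivalent to $\CluCat(H')$. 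That $T'$ is cluster-tilting in $\mathcal{D}$ then follows from the correct count of indecomposable summands ($n-1$, matching the rank of $\CluCat(H')$) together with the vanishing $\Hom_{\CluCat}(T', T'[1]) = 0$, inherited from $T$ being cluster-tilting.

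The identification $\End_{\mathcal{D}}(T') \simeq C/Ce_iC$ is then standard: morphisms $T' \to T'$ in $\CluCat$ that factor through $\add(T_i)$ form exactly the two-sided ideal $Ce_iC$ of $C$, and these are the morphisms killed upon restriction to $\mathcal{D}$. The hard part will be the construction of $\mathcal{D}$ itself: one must verify carefully that the perpendicular calculus on $\Der(H)$ descends cleanly through the orbit category construction to yield a genuine cluster category of a hereditary algebra, rather than merely a $2$-Calabi--Yau triangulated category carrying a cluster-tilting object.
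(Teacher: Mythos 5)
First, note that the paper does not prove this statement at all: it is quoted from Buan--Marsh--Reiten (cited there as {[BMR, Prop.\ 1.4]}), so the only comparison available is with that proof; but independently of this, your argument has a genuine gap at its core. The category you propose, the (Geigle--Lenzing) perpendicular subcategory of $T_i$, does not contain $T'$: perpendicularity requires $\Hom(T_i,-)$ (and $\Ext^1(T_i,-)$) to vanish, whereas $\HomC(T_i,T')$ and $\Hom_{\Der(H)}(T_i,T')$ are in general nonzero -- these Hom-spaces are part of the algebra $C$ itself (they account for the arrows of $Q_C$ at the vertex $i$), and they vanish only in degenerate cases. So the assertion that your $\mathcal D$ ``contains $T'$ as a cluster-tilting object'' fails, and the summand count plus rigidity cannot be run inside $\mathcal D$. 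Moreover, even if one had a full triangulated subcategory $\mathcal D\subseteq\CluCat$ containing $T'$, restriction to a full subcategory kills no morphisms: one would get $\End_{\mathcal D}(T')=\EndC(T')$, whereas $C/Ce_iC\simeq \EndC(T')$ modulo the ideal of maps factoring through $\add(T_i)$. What your strategy actually requires is not a perpendicular full subcategory but a subfactor (Calabi--Yau reduction in the sense of Iyama--Yoshino): the quotient of the full subcategory of objects $X$ with $\HomC(T_i,X[1])=0$ by the ideal $[\add T_i]$; there $T'$ is cluster-tilting and its endomorphism algebra is the desired quotient by construction. Your sentence ``these are the morphisms killed upon restriction to $\mathcal D$'' conflates these two constructions.

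The claimed ``compatibility of the perpendicular calculus with the orbit functor'' is also unjustified, and false in the naive form you use it: the perpendicular subcategory of $\Der(H)$ is not stable under $F=\tau^{-1}\circ[1]$, and the Serre functor (hence the relevant orbit functor) of $\Der(H')$ is not the restriction of that of $\Der(H)$; correspondingly $\CluCat(H')$ is not a full triangulated subcategory of $\CluCat(H)$ in general -- it only arises as a subfactor. Finally, you correctly flag that one must show the reduced category is a genuine cluster category of a hereditary algebra rather than merely a $2$-Calabi--Yau category with a cluster-tilting object, but you offer no argument, and this is exactly where the substantive work lies (the cited proof avoids it by realizing the cluster-tilting object as a tilting module over a hereditary algebra in the derived equivalence class and using module-level perpendicular categories, so that the quotient is exhibited directly as $\End_{\CluCat(H')}$ of a cluster-tilting object; otherwise one needs a recognition theorem in the style of Keller--Reiten). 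As written, the proposal does not constitute a proof.
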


\subsection{Relations for cluster-tilted algebras of Dynkin type}

We will need the description of the relations for cluster-tilted
algebras of Dynkin type given in \cite{BMR}. We start by recalling
that if there is an arrow from $i$ to $j$, a path from $j$ to $i$ is
called \emph{shortest} if it contains no proper subpath which is a
cycle and if the full subquiver generated by the path and the arrow
contains no further arrows. A relation $\rho$ is called \emph{minimal}
if whenever $ \rho= \sum_i \beta_i \rho_i\gamma_i $ where $\rho_i$ is
a relation for every $i$, then $\beta_i $ and $\gamma_i$ are scalars
for some index $i$ (see \cite{BMR}).

The following definition will simplify the language.

\begin{defn}[Parallel and antiparallel paths] 
An arrow $\alpha$ is called \emph{parallel},
(resp. \emph{antiparallel}) to a relation (or a path or an arrow) $\rho$ if
$s(\alpha)=s(\rho)$ and $t(\alpha)=t(\rho)$ (resp. $s(\alpha)=t(\rho)$
and $t(\alpha)=s(\rho)$).
\end{defn}

The following description is an immediate consecuence of
\cite[Thm. 4.1]{BMR}.

\begin{thm}
\label{thm:Dynkin-rel} 

Let $C=\field Q_C/I_C$ be a cluster-tilted algebra of Dynkin
type. Then, in $Q_C$ for each arrow $\eta$ there exist at most two
shortest antiparallel paths to $\eta$. If there is at least one and
$\Sigma_\eta$ denotes the full subquiver of $Q_C$ given by the
vertices of $\eta$ and the antiparallel paths, then the quiver
$\Sigma_\eta$ is isomorphic to $C(n)$ (for some $n$) or to $G(a,b)$
(for some $a,\,b$), as shown in the following picture.
\begin{center}
  \begin{picture}(320,70)
    \put(20,10){
      \put(-5,55){\RVCenter{\small $C(n)\colon$}}
      \put(0,24){\circle*{3}}
      \multiput(16,0)(30,0){2}{\circle*{3}}
      \multiput(16,48)(30,0){2}{\circle*{3}}
      \put(2,27){\vector(2,3){12}}
      \put(20,48){\vector(1,0){22}}
      \put(42,0){\vector(-1,0){22}}
      \put(14,3){\vector(-2,3){12}}
      \qbezier(48,45)(53,37.5)(55,31.5)
      \put(48,3){\vector(-2,-3){0.001}}
      \qbezier(49,4.5)(53,10.5)(55,16.5)
      \qbezier[3](56,27)(56.5,24)(56,21)
      \put(-3,24){\RVCenter{\small $v_1$}}
      \put(15,52){\RBCenter{\small $v_2$}}
      \put(47,52){\small $v_3$}
      \put(47,-4){\LTCenter{\small $v_{n-1}$}}
      \put(15,-4){\RTCenter{\small $v_n$}}
      \put(10,12){\small $\eta$}
      \put(10,36){\LTCenter{\small$\gamma$}}
      \put(31,3){\HBCenter{\small $\gamma$}}
      \put(31,45){\HTCenter{\small $\gamma$}}
    }
    \put(135,10){
      \put(0,55){\RVCenter{\small $G(a,b)\colon$}}
      \multiput(0,0)(0,50){2}{
        \multiput(25,0)(25,0){2}{\circle*{3}}
        \multiput(90,0)(25,0){2}{\circle*{3}}
        \multiput(29,0)(65,0){2}{\vector(1,0){17}}
        \put(54,0){\line(1,0){8}}
        \put(76,0){\vector(1,0){11}}
        \qbezier[3](66,0)(69,0)(72,0)
      }
      \multiput(0,25)(140,0){2}{\circle*{3}}
      \multiput(3,22)(115,25){2}{\vector(1,-1){19}}
      \multiput(3,28)(115,-25){2}{\vector(1,1){19}}
      \put(136,25){\vector(-1,0){132}}
      \put(-3,25){\RVCenter{\small $v_1$}}
      \put(25,-3){\HTCenter{\small $v_2'$}}
      \put(50,-3){\HTCenter{\small $v_3'$}}
      \put(90,-3){\HTCenter{\small $v_{b-1}'$}}
      \put(115,-3){\HTCenter{\small $v_b'$}}      
      \put(25,55){\HBCenter{\small $v_2$}}
      \put(50,55){\HBCenter{\small $v_3$}}
      \put(90,55){\HBCenter{\small $v_{a-1}$}}
      \put(115,55){\HBCenter{\small $v_a$}}      
      \put(143,25){\LVCenter{\small $v_{a+1}=v'_{b+1}$}}
      \put(70,28){\HBCenter{\small $\eta$}}
      \multiput(36.5,3)(65,0){2}{\HBCenter{\small $\beta$}}
      \multiput(36.5,47)(65,0){2}{\HTCenter{\small $\alpha$}}
      \put(10,10){\RTCenter{\small $\beta$}}
      \put(10,40){\RBCenter{\small $\alpha$}}
      \put(130,10){\LTCenter{\small $\beta$}}
      \put(130,40){\small $\alpha$}
    }
  \end{picture}
\end{center}

The ideal $I_C$ is generated by minimal zero relations and minimal
commutativity relations, and each of them is antiparallel to exactly
one arrow. If an arrow $\eta$ is antiparallel to the minimal zero
relation $\rho$, then $\Sigma_\eta \simeq C(n)$ and $\rho =
\gamma^{n-1} $. If $\eta$ is antiparallel to the minimal commutativity
relation $\rho_1 = \rho_2$, then $\Sigma_\eta \simeq G(a,b)$ and
$\rho_1 = \alpha^a \neq 0, \rho_2 = \beta^b \neq 0$.
\end{thm}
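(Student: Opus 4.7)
The plan is to deduce this from \cite[Thm.~4.1]{BMR}, which already provides a combinatorial description of the relations of a cluster-tilted algebra of Dynkin type in terms of shortest paths between vertices connected by an arrow. My task is essentially a translation: rephrase that description in terms of the arrow $\eta$ rather than in terms of a pair of vertices, and then read off the structure of $\Sigma_\eta$ from the local shape of the quiver around $\eta$.

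First I would fix an arrow $\eta\colon i\to j$ of $Q_C$ and consider shortest paths from $j$ to $i$, where ``shortest'' is as recalled before the theorem (no proper subpath is a cycle, and the full subquiver spanned by the path together with $\eta$ has no additional arrows). Such a path, together with $\eta$, gives rise to a chordless cycle of $Q_C$, which by Proposition~\ref{prop:cyc-oriented} is oriented. This immediately forces the local shape at $\eta$ to consist of one or two oriented chordless cycles glued along $\eta$, and in the case of two cycles they can only share the vertices $i$ and $j$ (otherwise chord-freeness would fail). At this stage I would invoke \cite[Thm.~4.1]{BMR} to conclude that there are at most two such shortest antiparallel paths to any arrow, and that the full subquiver $\Sigma_\eta$ has exactly the shape $C(n)$ (one antiparallel path of length $n-1$) or $G(a,b)$ (two antiparallel paths of lengths $a$ and $b$).

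Next I would translate the relation statements. Theorem~4.1 of \cite{BMR} asserts that $I_C$ is generated by minimal relations each of which is attached to a pair of vertices connected by an arrow; a minimal zero relation occurs when there is a unique shortest antiparallel path, and a minimal commutativity relation occurs when there are two. Reindexing by the arrow $\eta$ rather than by its endpoints, each generating relation is antiparallel to precisely one arrow. In the $C(n)$ case the unique antiparallel path is $\gamma^{n-1}$, and Theorem~4.1 gives the zero relation $\gamma^{n-1}=0$. In the $G(a,b)$ case, the two antiparallel paths are $\alpha^a$ and $\beta^b$, and Theorem~4.1 gives the commutativity relation $\alpha^a=\beta^b$, together with the non-vanishing of each side (otherwise the relation would not be minimal: one of the two sides would itself be a zero relation in the quotient, and one could split the commutativity relation as a combination of zero relations, contradicting minimality).

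The main point to verify carefully — and the only place where there is content beyond quoting \cite{BMR} — is the ``at most two antiparallel shortest paths'' assertion together with the shape of $\Sigma_\eta$. I expect this to be the main obstacle, and would handle it as follows: suppose for contradiction there were three shortest antiparallel paths to $\eta$. Each gives an oriented chordless cycle through $\eta$, and pairs of such cycles meeting only at $i,j$ are allowed, but a third one would force either a chord (contradicting chord-freeness) or a non-oriented chordless cycle formed by two of the antiparallel paths (contradicting Proposition~\ref{prop:cyc-oriented}). The same local analysis confirms that no further arrows appear inside $\Sigma_\eta$ beyond those drawn in the pictures of $C(n)$ and $G(a,b)$, which completes the identification of $\Sigma_\eta$ and therefore of the generating relations.
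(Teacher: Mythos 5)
Your proposal follows the same route as the paper, which simply records this statement as an immediate consequence of \cite[Thm.~4.1]{BMR}: that theorem already contains the bound of at most two shortest antiparallel paths, the zero/commutativity dichotomy and the non-vanishing of both sides, so your translation from ``pairs of vertices joined by an arrow'' to ``the arrow $\eta$'' together with reading off the shapes $C(n)$ and $G(a,b)$ (using Proposition~\ref{prop:cyc-oriented} for orientedness of the resulting chordless cycles) is exactly the intended argument. The extra contradiction sketch for a hypothetical third antiparallel path is harmless but redundant, since that bound is part of the cited theorem.
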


Hence each arrow in an oriented cycle is antiparallel to precisely one
minimal relation (up to scalars), and the relations obtained this way
form a minimal set of generators of $I_C$.

\begin{lem}\label{lem:no-bypass}
  Let $C$ be a cluster-tilted algebra of Dynkin type with quiver
  $Q$. Then for each arrow $\alpha$ there is no other shortest path
  than $\alpha$ which is parallel to $\alpha$ in $Q$.
\end{lem}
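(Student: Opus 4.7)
The plan is to argue by contradiction using Proposition \ref{prop:cyc-oriented}. Suppose $\alpha\colon i\to j$ is an arrow in $Q$ and that $p$ is a path from $i$ to $j$ which is shortest and parallel to $\alpha$ but $p\neq \alpha$. Here ``shortest'' is meant in the natural analogue of the antiparallel notion defined just before Theorem \ref{thm:Dynkin-rel}: $p$ contains no proper subpath that is a cycle, and the full subquiver generated by $p$ together with $\alpha$ contains no further arrows besides those of $p$ and $\alpha$ itself.

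Let $Q'$ be this full subquiver. The first step is to verify that $Q'$ is a chordless cycle in the sense of Section \ref{subsec:quivers}. Connectedness and fullness follow by construction (the latter from the ``no further arrows'' condition). For the degree condition, observe that if $p=\alpha'$ has length one, then $Q'$ consists of two vertices $i,j$ and the two parallel arrows $\alpha,\alpha'$, each incident to both vertices; if $p$ has length $\geq 2$, then the no-proper-cycle-subpath condition forces the intermediate vertices of $p$ to be pairwise distinct and distinct from $i,j$, so at every such intermediate vertex there are exactly two arrows of $p$ (one incoming, one outgoing) and no others by the fullness condition, while $i$ has exactly $\alpha$ and the first arrow of $p$ incident, and $j$ has exactly $\alpha$ and the last arrow of $p$ incident. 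Hence $Q'$ is a chordless cycle.

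The second step is to observe that $Q'$ cannot be oriented. Indeed, at the vertex $i$ both $\alpha$ and the first arrow of $p$ start at $i$, so neither stops at $i$; equivalently at $j$ both end. This contradicts the definition of oriented cycle, which requires exactly one arrow to start and one to stop at each vertex. But by Proposition \ref{prop:cyc-oriented} every chordless cycle in $Q$ is oriented, which yields the desired contradiction and proves the lemma.

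I do not expect a real obstacle here: once the definition of ``shortest parallel path'' is taken as the obvious analogue of the antiparallel one, the only subtle point is ruling out hidden chords or repeated vertices in $Q'$, which is handled exactly by the two clauses in the definition of shortest. The whole argument is then purely combinatorial and reduces to a single invocation of Proposition \ref{prop:cyc-oriented}.
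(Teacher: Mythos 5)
Your proof is correct for the statement as worded, and it rests on the same key ingredient as the paper's proof, namely Proposition~\ref{prop:cyc-oriented}; the difference lies in how the non-oriented chordless cycle is produced and in the strength of what is proved. You use the \emph{shortest} hypothesis in an essential way: the two clauses (no proper cyclic subpath, no further arrows in the full subquiver) immediately make the subquiver generated by $\alpha$ and the parallel path a chordless cycle, which is non-oriented because both $\alpha$ and the first arrow of the path start at $s(\alpha)$. The paper, by contrast, never uses the shortest hypothesis: it takes an \emph{arbitrary} path $\gamma$ parallel to $\alpha$ (ruling out the length-one case by finite representation type, where you instead read the double arrow itself as a non-oriented chordless $2$-cycle -- both are acceptable), and then extracts a chordless non-oriented cycle from $\alpha\gamma$ by choosing extremal chords (a minimal $m$ and maximal $M$ with an arrow between $x_m$ and $x_M$). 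What your route buys is simplicity: no chord-extraction is needed because shortestness already forbids chords and repeated vertices. What the paper's route buys is the stronger conclusion that \emph{no} parallel path other than $\alpha$ exists at all, and this stronger form is what is actually invoked later: in the proof of Lemma~\ref{lem:no-oriented} the parallel path arising from a chord of a cycle in $Q_B$ is not known to be shortest in $Q_C$, so your version would not suffice there without an additional reduction (extracting a shortest parallel path from an arbitrary one is not immediate, since a chord may point ``backwards''). So as a proof of the literal statement your argument stands, but be aware that the paper's proof deliberately establishes, and its subsequent use requires, the hypothesis-free version.
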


\begin{proof}
  Assume otherwise, that is, there exists a path $\gamma$ parallel to
  $\alpha$ which is different in $Q$. Since $C$ is of finite
  representation type, $\gamma$ can not be an arrow. Let
  $\gamma=\gamma_t\gamma_{t-1}\cdots\gamma_1$ be as follows.
  $$
  x_0\xrightarrow{\gamma{1}}x_1\ra\cdots\ra
  x_{t-2}\xrightarrow{\gamma_{t-1}} x_{t-1} \xrightarrow{\gamma_t}x_t
  $$
  By Proposition \ref{prop:cyc-oriented}, 
  the cycle $\alpha\gamma$ is not
  chordless. Let $m\geq 0$ be minimal such that there exists an arrow
  between $x_m$ and $x_s$ for some $s>m+1$. Then let $M$ with
  $m+1<M\leq t$ be maximal such that there exists an arrow $\delta$ between
  $x_m$ and $x_M$. Then the arrows
  $$
  \alpha,\gamma_t,\ldots,\gamma_{M+1},\delta,\gamma_{m},\ldots,\gamma_1
  $$
  form a non-oriented cycle which by contruction is chordless, in
  contradiction to Proposition \ref{prop:cyc-oriented}.
\end{proof}

\subsection{Definition of Admissible cut}
We are now ready to give the combinatorial description of how the
iterated tilted algebras $B$ with $\gldim B\leq 2$ can be obtained
from a cluster-tilted algebra $C$. For this we introduce the following
concept. 

\begin{defn}[Admissible cut]
A subset of the set of arrows $Q_1$ of a quiver $Q$ is called
\emph{admissible cut} of $Q$ if it contains exactly one arrow of each
oriented chordless cycle in $Q$. 
\end{defn}

\begin{remark}
  Let $\Delta$ be an admissible cut of a quiver $Q$. It is
  straightforward to check that for 
  $\alpha\in\Delta$ each arrow $\beta$ of $Q$ which is parallel to
  $\alpha$ also belongs to $\Delta$. 
\end{remark}

\begin{defn}[Quotient by an admissible cut]
  Let $C=\field Q_C/I$ be an algebra given by a quiver $Q_C$ and an
  admissible ideal $I$. A \emph{quotient of $C$ by an admissible cut}
  (or \emph{an admissible cut of $C$)} is an algebra of the form
  $\field Q_C/\gen{I\cup \Delta}$ where $\Delta$ is an admissible cut
  of $Q_C$.
\end{defn}

This is, $B$ is an admissible cut of $C$ if $B$ is the algebra
obtained by deleting in $Q_C$ the arrows of an admissible cut $\Delta$
and considering the induced relations.

\begin{remark}
\label{rem:cut-presentation}
  The definition is not independent of the presentation
  of $B$, that is, for two ideals $I_1$ and $I_2$ such that $\field
  Q/I_1\simeq \field Q/I_2$ the same cut may give
  non-isomorphic quotients $\field Q/\gen{I_1\cup \Delta}\not\simeq
  \field Q/\gen{I_2\cup \Delta}$, as shows the following example. Let
  $Q$ be the quiver as given in the following picture.
  \begin{center}
    \begin{picture}(160,62)
      \put(0,4){
        \put(-20,48){$Q\colon$}
        \multiput(0,20)(40,0){2}{\circle*{3}}
        \multiput(20,0)(0,40){2}{\circle*{3}}
        \put(2,22){\vector(1,1){16}}
        \put(22,38){\vector(1,-1){16}}
        \put(38,18){\vector(-1,-1){16}}
        \put(18,2){\vector(-1,1){16}}
        \put(3,20){\vector(1,0){34}}
        \put(20,23){\HBCenter{\small $\alpha$}}
        \put(9,32){\RBCenter{\small $\alpha'$}}
        \put(32,32){\small $\alpha''$}
        \put(8,8){\RTCenter{\small $\gamma$}}
        \put(31,9){\LTCenter{\small $\beta$}}
      }
      \put(120,4){
        \put(-20,48){$Q_{B_1}=Q_{B_2}\colon$}
        \multiput(0,20)(40,0){2}{\circle*{3}}
        \multiput(20,0)(0,40){2}{\circle*{3}}
        \put(2,22){\vector(1,1){16}}
        \put(22,38){\vector(1,-1){16}}
        \put(38,18){\vector(-1,-1){16}}
        \put(18,2){\vector(-1,1){16}}
        \put(9,32){\RBCenter{\small $\alpha'$}}
        \put(32,32){\small $\alpha''$}
        \put(8,8){\RTCenter{\small $\gamma$}}
        \put(31,9){\LTCenter{\small $\beta$}}
      }
    \end{picture}
  \end{center}
  Furthermore, let $I_1=\gen{\beta\alpha,\ \gamma\beta }$ and
  $I_2=\gen{\beta(\alpha-\alpha''\alpha'),\, \gamma\beta}$. Then the
  quotients $\field Q/I_1$ and $\field Q/I_2$ are
  isomorphic. Furthermore $\Delta=\{\alpha\}$ is an admissible cut but
  the quotients $B_1=\field Q/\gen{I_1\cup\Delta}$ and $B_2=\field
  Q/\gen{I_2\cup \Delta}$ are non-isomorphic since
  $\gen{I_1\cup\Delta}=\gen{\alpha,\gamma\beta}$ whereas
  $I_2=\gen{\alpha,\beta\alpha''\alpha',\gamma\beta}$, that is, $B_2$
  is a proper quotient of $B_1$. 
\end{remark}

However, an admissible cut of a cluster-tilted algebra $C$ of Dynkin
type is independent of the presentation of $C$. This follows from the
next lemma, and the fact that any such algebra $C$ is schurian, that
is, $\dim_k\ e_yCe_x \leq 1$ for any pair of vertices $x, y \in
Q_C$. See \cite [Lemma 1.8] {BMR}.

\begin{lem}
  If $C$ is a schurian algebra and $\Delta$ an
  admissible cut of the quiver $Q$ of $C$ then the quotient of $C$ by
  $\Delta$ is independent of the presentation of $C$.
\end{lem}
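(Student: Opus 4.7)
The plan is to show that the two a priori different quotients are in fact canonically equal as quotients of $C$ itself, rather than just isomorphic. Given a presentation $\pi\colon kQ \to C$ with kernel $I$, I would first observe that
$$
kQ/\gen{I\cup\Delta}\ \simeq\ C/J_\pi,
$$
where $J_\pi$ denotes the two-sided ideal of $C$ generated by $\{\pi(\alpha)\mid \alpha\in\Delta\}$; this follows because the preimage of $J_\pi$ under $\pi$ is exactly $\gen{I\cup\Delta}$. Hence it suffices to prove that $J_\pi$ is independent of the admissible presentation $\pi$.

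For this, let $\pi,\pi'\colon kQ\to C$ be two presentations with admissible kernels $I,I'$. For each arrow $\alpha$ of $Q$, admissibility gives $I,I'\subseteq\rad^2 kQ$, so no arrow lies in either kernel; thus both $\pi(\alpha)$ and $\pi'(\alpha)$ are nonzero elements of $e_{t(\alpha)}Ce_{s(\alpha)}$. The schurian hypothesis $\dim_k e_yCe_x\leq 1$ then forces this space to be exactly one-dimensional, so
$$
\pi(\alpha)=\lambda_\alpha\,\pi'(\alpha)\quad\text{for some}\quad\lambda_\alpha\in k^\ast.
$$
Applying this to the arrows $\alpha\in\Delta$, the generating sets of $J_\pi$ and $J_{\pi'}$ agree up to nonzero scalars, so $J_\pi=J_{\pi'}$ as ideals of $C$, and the two quotients coincide.

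I do not anticipate a serious obstacle; the argument is direct once the quotient is rephrased as a quotient of $C$. The only delicate point is recognizing that the schurian inequality should be applied to the one-dimensional spaces $e_{t(\alpha)}Ce_{s(\alpha)}$, which makes the images of the arrows canonical up to scalar. It is worth stressing that the schurian hypothesis is essential here: in the example of Remark~\ref{rem:cut-presentation} the relevant hom-space has dimension $\geq 2$, and it is precisely this extra freedom that allows the two presentations there to send the cut arrow to genuinely different elements of $C$ (not merely to scalar multiples of one another), producing the non-isomorphic quotients $B_1\not\simeq B_2$.
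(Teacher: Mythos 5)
Your core idea is the one the paper uses: since the kernels are admissible ($I,I'\subseteq\rad^2\field Q$) and $C$ is schurian, the image of an arrow is pinned down up to a nonzero scalar, so the ideal generated by the cut cannot change; also your reduction $\field Q/\gen{I\cup\Delta}\simeq C/J_\pi$ is correct. The gap is in the comparison step. The idempotents $e_x$ are not canonical elements of the abstract algebra $C$: what you really have is $\pi(e_x)$, respectively $\pi'(e_x)$, and two admissible presentations need only send the trivial paths to two complete sets of primitive orthogonal idempotents which agree up to conjugation by a unit and up to a permutation of the vertices (induced by an automorphism of $Q$). Hence $\pi(\alpha)$ and $\pi'(\alpha)$ need not lie in one common space $e_{t(\alpha)}Ce_{s(\alpha)}$, the schurian inequality cannot be applied to them directly, and your intermediate claim that $J_\pi=J_{\pi'}$ as ideals of $C$ is in fact false. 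A concrete failure: let $Q$ be an oriented $3$-cycle with arrows $\alpha,\beta,\gamma$, let $C=\field Q/\rad^2\field Q$ (which is schurian), $\Delta=\{\alpha\}$, $\pi$ the canonical projection and $\pi'=\rho\circ\pi$, where $\rho$ is the automorphism of $C$ induced by rotating the cycle; then $J_\pi=\gen{\pi(\alpha)}$ and $J_{\pi'}=\gen{\pi(\beta)}$ are distinct ideals of $C$, although the two quotients are of course isomorphic. So what the lemma needs, and what is true, is only that $J_\pi$ and $J_{\pi'}$ correspond under an automorphism of $C$, not that they coincide.

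The missing step is exactly the normalization with which the paper's proof begins: adjust one presentation by an automorphism of $C$ (an inner one to make the two families of idempotents coincide, composed with an isomorphism induced by an automorphism of the quiver to match the labels), so that one may assume $\pi(e_x)=\pi'(e_x)$ for all $x$; equivalently, the induced isomorphism $f\colon \field Q/I\to\field Q/I'$ satisfies $f(e_x)=e_x$. From that point on your argument is the paper's: $f(\alpha)=\lambda_\alpha\alpha$ with $\lambda_\alpha\neq 0$ by the schurian hypothesis and admissibility, so $\Delta$ and $f(\Delta)$ generate the same ideal and $f$ induces an isomorphism $\field Q/\gen{I\cup\Delta}\to\field Q/\gen{I'\cup\Delta}$. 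Your closing observation about Remark~\ref{rem:cut-presentation} --- that the failure there stems from a two-dimensional space $e_yCe_x$, which lets the two presentations send the cut arrow to essentially different elements --- is correct and is a good sanity check, but it does not repair the step above.
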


\begin{proof}
  Let $f\colon kQ/I \rightarrow kQ/J$ be an isomorphism. By composing, if
  necessary, with the isomorphism of $kQ$ induced by an isomorphism of
  the quiver $Q$, we may assume that $f(e_x) = e_x$, for each $x \in
  Q_0$.

  Since $C$ is schurian, $\dim_k \ e_y (kQ/J)e_x \leq 1$ for each $x, y
  \in Q_0$.  So for each arrow $\alpha$ we have that $f(\alpha) =
  \lambda_\alpha \alpha$ for some non-zero $\lambda_\alpha \in k$.
  Thus if $ \Delta$ is an admissible cut of $ Q $ then $\Delta $ and
  $f(\Delta)$ generate the same ideal in $kQ/J$, and therefore the map
  $ KQ/(I\cup \Delta)\rightarrow kQ/(J\cup \Delta)$ induced by $f$ is
  an isomorphism.
\end{proof}

Notice that the example given in Remark \ref{rem:cut-presentation}
also shows that it is possible that the quiver $Q_{B_1}$ of a quotient
of an algebra $C$ by an admisible cut may have oriented chordless
cycles. However, this can not happen in case where $C$ is a
cluster-tilted algebra of Dynkin type.

\begin{lem}\label{lem:no-oriented}
  Let $C$ be a cluster-tilted algebra of Dynkin type  and $\Delta$
  an admissible cut of the quiver $Q_C$ of $C$. Then for any
  presentation $C=\field Q_C/I$, the quiver $Q_B$ of the quotient $B=\field
  Q_C/\gen{I\cup\Delta}$ has no oriented chordless cycle. 
\end{lem}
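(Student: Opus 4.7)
Suppose for contradiction that $\sigma=\alpha_r\alpha_{r-1}\cdots\alpha_1$ is an oriented chordless cycle of $Q_B$. Since every arrow of $\sigma$ lies in $Q_C\setminus\Delta$ and $\sigma$ has no chord in $Q_B$, any chord of $\sigma$ that exists in $Q_C$ must itself lie in $\Delta$ (otherwise it would already be a chord of $\sigma$ in $Q_B$, contradicting chordlessness). If $\sigma$ has no chord at all in $Q_C$, then $\sigma$ is already an oriented chordless cycle of $Q_C$, and by the defining property of an admissible cut it must contain an arrow of $\Delta$, contradicting $\sigma\subset Q_B$.

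So assume $\sigma$ has chords in $Q_C$, all lying in $\Delta$. Choose such a chord $\delta$ minimising the length $l\geq 2$ of the shorter of the two arcs of $\sigma$ between its endpoints $v_i$ and $v_j$; let $q$ be this shorter arc and $q^c$ the longer one. The minimality of $l$, Lemma \ref{lem:no-bypass} applied to $\delta$ (which forbids any second arrow parallel to $\delta$), and the standard fact that the quiver of a cluster-tilted algebra of Dynkin type carries no multiple arrows and no $2$-cycles together imply that the full subquiver of $Q_C$ on the vertex set of $q$ reduces to $q\cup\{\delta\}$. If $\delta$ is parallel to $q$, then this subquiver is a chordless cycle of $Q_C$ in which both endpoints of $\delta$ carry two outgoing (respectively two incoming) arrows, so it is not oriented; this contradicts Proposition \ref{prop:cyc-oriented}.

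The remaining case, in which $\delta$ is antiparallel to $q$, is the main obstacle: here $q\cup\{\delta\}$ is itself an oriented chordless cycle of $Q_C$ and gives no immediate contradiction. To handle it I apply Lemma \ref{lem:no-bypass} to $\delta$ on the longer arc $q^c$, which is parallel to $\delta$ and of length $k-l\geq 2$. Since $q^c$ cannot be a shortest path parallel to $\delta$, the full subquiver of $Q_C$ on the vertex set of $q^c$ must contain an extra arrow, and by the same exclusion argument as above this extra arrow is another chord $\delta'\in\Delta$ of $\sigma$ with both endpoints on $q^c$. Applying the parallel-case analysis of the previous paragraph to the sub-arc of $q^c$ cut out by $\delta'$ then either immediately produces a chordless non-oriented cycle of $Q_C$ (again contradicting Proposition \ref{prop:cyc-oriented}), or else yields a further chord of $\sigma$ whose associated sub-arc is strictly shorter than the current one. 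Since $Q_C$ is finite, this descent terminates, and the last step is forced into the parallel configuration of the previous paragraph, producing the desired contradiction.
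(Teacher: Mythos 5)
Your first paragraph and your ``parallel'' case are fine and agree in substance with the paper: an oriented chordless cycle of $Q_B$ cannot be chordless in $Q_C$, so it has a chord, and if that chord is parallel to a chordless arc one gets a non-oriented chordless cycle, contradicting Proposition \ref{prop:cyc-oriented}. The genuine gap is in your ``antiparallel'' descent. At a later stage the new chord $\delta'$ may again be \emph{antiparallel} to the sub-arc of the current region that it cuts out, and then neither branch of your dichotomy applies: no non-oriented cycle is produced (the sub-arc together with $\delta'$ is an \emph{oriented} cycle), and the only path parallel to $\delta'$ is the complementary arc of $\sigma$, which leaves the current region and is in general \emph{longer}, so the measure you claim is strictly decreasing does not decrease. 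The final assertion that ``the last step is forced into the parallel configuration'' is never argued and is false for the procedure as described: for instance, if the oriented cycle $\sigma$ were triangulated by backward chords $x_{i+2}\to x_i$, every chord is antiparallel to a chordless arc of length two, and your procedure stalls in the antiparallel configuration without ever producing a contradiction (such a configuration is of course impossible in $Q_C$, but ruling it out is exactly what your argument fails to do). Note also that you cannot hope to repair this by a generic ``split the non-oriented cycle at a chord'' argument: a non-oriented cycle with a chord can split into two \emph{oriented} cycles --- this is precisely the $G(a,b)$ shape of Theorem \ref{thm:Dynkin-rel} --- so extracting a chordless non-oriented cycle requires the careful choice of chord made in the proof of Lemma \ref{lem:no-bypass}, not an unspecified recursion.

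The paper avoids all of this with a one-line finish that your antiparallel case is missing: once a chord $\alpha$ of $\sigma$ exists in $Q_C$, the arc of $\sigma$ running from $s(\alpha)$ to $t(\alpha)$ is a path parallel to $\alpha$ and different from it, and this already contradicts Lemma \ref{lem:no-bypass}; the proof of that lemma takes an arbitrary parallel path (it never uses the ``shortest'' hypothesis) and manufactures a chordless non-oriented cycle from it. So either invoke Lemma \ref{lem:no-bypass} in this form directly on $\delta$ and the arc $q^c$, or, if you insist on the literal ``shortest'' wording, redo the chord-selection argument of that lemma inside $q^c\cup\{\delta\}$; the descent as you have written it does not substitute for either.
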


\begin{proof}
  Suppose the contrary, namely that in $Q_B$ there exists an oriented
  chordless cycle, given by a path
  $$
  \gamma\colon x_0\xrightarrow{\gamma_1}x_1\ra\cdots\ra
  x_{t-2}\xrightarrow{\gamma_{t-1}}
  x_{t-1}\xrightarrow{\gamma_t}x_t=x_0
  $$
  Then $\gamma$ cannot be chordless in $Q_C$ by
  the definition of admissible cut. Thus there exists an arrow
  $\alpha$ between $x_r$ and $x_s$ for some $s>r+1$. After renumbering
  the vertices $x_i$ and the arrows $\gamma_i$ we can assume without
  loss of generality that $\alpha\colon x_0\ra x_s$ for some $s$ with
  $1<s<t$. This contradicts Lemma \ref{lem:no-bypass}.
\end{proof}

\subsection{Existence of admissible cuts}

We start by the observation that there exist quivers which do not
admit an admissible cut.

\begin{exmp}
  Let $Q$ be the following quiver.
  \begin{center}
    \begin{picture}(200,150)
      \put(100,70){
        \put(0,80){\circle*{3}}
        \multiput(-65,-45)(130,0){2}{\circle*{3}}
        \multiput(-20,0)(40,0){2}{\circle*{3}}
        \multiput(-10,40)(20,0){2}{\circle*{3}}
        \put(42.5,-22.5){\circle*{3}}
        \put(-42.5,-22.5){\circle*{3}}
        \put(29,-33){\circle*{3}}
        \put(-29,-33){\circle*{3}}
        \qbezier(2,78)(50,30)(64.1,-41.3)
        \put(64.1,-41.3){\vector(1,-3){0.01}}
        \qbezier(-2,78)(-50,30)(-64.1,-41.3)
        \put(-2,78){\vector(1,1){0.001}}
        \qbezier(-62,-46.5)(0,-77.5)(62,-46.5)
        \put(-62,-46.5){\vector(-2,1){0.001}}
        \put(-16,0){\vector(1,0){32}}
        \multiput(19.2,3.2)(-10,40){2}{\vector(-1,4){8.4}}
        \multiput(-0.8,76.8)(-10,-40){2}{\vector(-1,-4){8.4}}
        \multiput(22,-2)(22.5,-22.5){2}{\vector(1,-1){18.5}}
        \multiput(-63,-43)(22.5,22.5){2}{\vector(1,1){18.5}}
        \put(62,-44){\vector(-3,1){30}}
        \put(-32,-34){\vector(-3,-1){30}}
        \qbezier(26,-32)(-10,-20)(-18.5,-3)
        \put(-18.5,-3){\vector(-1,2){0.001}}
        \qbezier(-26,-32)(10,-20)(18.5,-3)
        \put(-26,-32){\vector(-3,-1){0.001}}
        \put(53,20){\HVCenter{\small $\alpha_1$}}
        \put(-53,20){\HVCenter{\small $\alpha_3$}}
        \put(0,-66.7){\HVCenter{\small $\alpha_2$}}
        \put(0,3){\HBCenter{\small $\gamma$}}
        \put(8.2,56){\LVCenter{\small $\delta'_1$}}
        \put(-8.2,56){\RVCenter{\small $\beta_3$}}
        \put(17,20){\LVCenter{\small $\delta_1$}}
        \put(-17,20){\RVCenter{\small $\beta'_3$}}
        \put(51,-21){\HVCenter{\small $\delta'_2$}}
        \put(34,-6){\HVCenter{\small $\delta_2$}}
        \put(44,-43){\HVCenter{\small $\beta_1$}}
        \put(13,-34){\HVCenter{\small $\beta'_1$}}
        \put(-51,-21){\HVCenter{\small $\beta_2$}}
        \put(-35,-6){\HVCenter{\small $\beta'_2$}}
        \put(-44,-43){\HVCenter{\small $\delta'_3$}}
        \put(-13,-34){\HVCenter{\small $\delta_3$}}
      }
    \end{picture}
  \end{center}
  The only chordless cycles in $Q$ are given by the paths 
  $$
  \alpha_3\alpha_2\alpha_1,\quad\quad
  \delta'_i\delta_i\gamma\beta'_i\beta_i\alpha_i,\quad\quad
  \delta'_{i+1}\delta_{i+1}\gamma\beta'_i\beta_i
  $$ 
  for $i=1,2,3$ where the indices have to be taken modulo $3$.
  
  Suppose that there exists an admissible cut $\Delta$ in $Q$. Then
  one (and only one) of the arrows $\alpha_i$ has to belong to
  $\Delta$. Because of the cyclic symmetry (by interchanging the
  indices cyclically modulo $3$) we can without loss of generality
  assume that $\alpha_1$ belongs to $\Delta$. Since 
 $\delta_1'\delta_1\gamma\beta_1'\beta_1\alpha_1$
 is a chordless cycle we have 
 $\delta_1',\delta_1,\gamma,\beta_1',\beta_1\not \in \Delta$. Since
  $\delta'_1\delta_1\gamma\beta'_3\beta_3$
  (resp. $\delta'_2\delta_2\gamma\beta'_1\beta_1$) is a chordless
  cycle, one (and only one) of the arrows $\beta_3$ or $\beta'_3$
  (resp. $\delta_2$ or $\delta'_2$) must also belong to $\Delta$. We
  can assume the two arrows are $\beta_3$ and $\delta_2$ since the
  argument for any other choice is completely similar.

  Let $C$ be the set of chordless cycles which contain an arrow from 
  $\Delta'=\{\alpha_1,\beta_3,\delta_2\}$. Observe that $C$ contains
  all chordless cycles except $\delta'_3\delta_3\gamma\beta'_2\beta_2$
  and that each arrow of $Q$ occurs in one of the cycles in $C$. Hence
  on one hand the admissible cut $\Delta$ must contain another arrow
  from $\delta'_3\delta_3\gamma\beta'_2\beta_2$ and on the other hand
  $\Delta$ can not contain any more since otherwise one of the cycles
  of $C$ would contain two arrows from$\Delta$, a contradiction. This
  proves that $Q$ does not admit an admissible cut.
\end{exmp}

The following result shows that the quiver of any cluster-tilted
algebra of Dynkin type admits an admissible cut. 

\begin{prop}
\label{prop:cut-exists}
Let $B$ be an iterated tilted algebra of Dynkin type  with
$\gldim B\leq 2$. Then $B$ is an admissible cut of the corresponding
cluster-tilted algebra $C(B)$.
\end{prop}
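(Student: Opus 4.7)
The plan is to take the set $\Delta=\{\eta_1,\ldots,\eta_s\}$ identified by Proposition~\ref{prop:Ker} as the candidate admissible cut of $Q_{C(B)}$, and verify its admissibility using Theorem~\ref{thm:Dynkin-rel}. By Theorem~\ref{thm:split} and Proposition~\ref{prop:Ker} there are compatible presentations of $B$, $\ealg(B)$, and $C(B)$ on a common vertex set, with arrows $\{\alpha_1,\ldots,\alpha_r,\eta_1,\ldots,\eta_s\}$ for $Q_{C(B)}$ (the $\alpha_i$ being the arrows of $Q_B$) and $\Ker\pi(B)=\langle\eta_1,\ldots,\eta_s\rangle^2$. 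Composing $\pi(B)$ with the canonical projection $\ealg(B)\to B$ yields $B\simeq kQ_{C(B)}/\langle I_{C(B)}\cup\Delta\rangle$, so it remains only to check that $\Delta$ satisfies the admissibility condition. By Proposition~\ref{prop:cyc-oriented} every chordless cycle in $Q_{C(B)}$ is oriented, so the task reduces to showing each such cycle meets $\Delta$ in exactly one arrow.

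For the \emph{at least one} direction, I would argue by contradiction. Suppose a chordless oriented cycle $\zeta$ of $Q_{C(B)}$ lies entirely in $Q_B$. By Theorem~\ref{thm:Dynkin-rel} some arrow $\eta$ of $\zeta$ is antiparallel to a minimal relation $\rho$ of $C(B)$ built from the remaining arrows of $\zeta$, all of which lie in $Q_B$. Because $B$ is a split subalgebra of $C(B)$ by Theorem~\ref{thm:split}, $\rho$ also defines a relation in $B$, minimal there since $\zeta$ is already chordless in $Q_B$. Hence $\rho$ represents a non-zero class in $\Ext_B^2(\dual B,B)/\rad \Ext_B^2(\dual B,B)$, producing a basis element $\eta_k\in\Delta$ antiparallel to $\rho$ and thus parallel to $\eta$. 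Since cluster-tilted algebras of Dynkin type have simply laced quivers (cf.\ \cite{BMR3}), distinct parallel arrows $\eta\in Q_B$ and $\eta_k\in\Delta$ cannot coexist, yielding the desired contradiction.

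For the \emph{at most one} direction, each $\eta_i\in\Delta$ is (by the definition of the basis in Proposition~\ref{prop:Ker}) antiparallel to a minimal relation $p_i$ of $B$ consisting of a path (or pair of parallel paths in the commutativity case) in $Q_B$; via the split inclusion $B\hookrightarrow C(B)$, $p_i$ remains a minimal relation of $C(B)$ antiparallel to $\eta_i$. Theorem~\ref{thm:Dynkin-rel} then pins down the full subquiver $\Sigma_{\eta_i}$ of $Q_{C(B)}$ to be the $C(n)$ or $G(a,b)$ shape built from $\eta_i$ and $p_i$ alone, with no extra arrows between its vertices, so in particular no other $\eta_j$'s appear among the arrows of $\Sigma_{\eta_i}$. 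Combined with the at-least-one argument, the special arrow of any chordless oriented cycle $\zeta$ of $Q_{C(B)}$ (in the sense of Theorem~\ref{thm:Dynkin-rel}) must belong to $\Delta$, say equals $\eta_i$, so $\zeta$ is one of the one-or-two chordless cycles contained in $\Sigma_{\eta_i}$, whence $\zeta\cap\Delta=\{\eta_i\}$. The main obstacle is this last identification step---rigorously showing that every chordless oriented cycle in $Q_{C(B)}$ is captured by some $\Sigma_{\eta_i}$---which depends intricately on Theorem~\ref{thm:Dynkin-rel}, the no-multiple-arrow property of Dynkin-type cluster-tilted quivers, and the correspondence between minimal relations of $B$ and of $C(B)$ afforded by the split extension of Theorem~\ref{thm:split}.
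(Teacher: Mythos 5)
Your reduction of the statement to showing that the set $\Delta=\{\eta_1,\dots,\eta_s\}$ of new arrows meets every chordless (hence oriented) cycle of $Q_{C(B)}$ exactly once is the right frame, and the identification $B\simeq \field Q_{C(B)}/\gen{I_{C(B)}\cup\Delta}$ via $\Ker\pi(B)=\gen{\eta_1,\dots,\eta_s}^2$ is fine. The genuine gap is in your ``at most one'' step, and it sits exactly at the point of the proposition. You claim that each minimal relation $p_i$ of $B$ ``remains a minimal relation of $C(B)$ antiparallel to $\eta_i$'', and then use the uniqueness in Theorem~\ref{thm:Dynkin-rel} to conclude that $\Sigma_{\eta_i}$ is built from $\eta_i$ and arrows of $Q_B$ alone. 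But the split extension (via \cite[2.3]{ACT}) only gives $I_B\subseteq I_{C(B)}$, i.e.\ that $p_i$ is \emph{a} relation of $C(B)$; minimality is a condition over the larger ideal $I_{C(B)}$ and the larger path algebra $\field Q_{C(B)}$, and it does not pass from $B$ to $C(B)$. Indeed, in precisely the configuration you must exclude --- a chordless cycle through $\eta_i$ containing a second cut arrow --- the minimal relation of $C(B)$ antiparallel to $\eta_i$ is supported on that cycle and hence involves arrows outside $Q_B$, so it cannot be $p_i$, and $p_i$ is then a non-minimal relation of $C(B)$. Your key claim is therefore equivalent to the statement being proved, and you yourself flag this ``identification step'' as the main obstacle; as written the proof is incomplete.

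The paper closes exactly this hole by contradiction: if a chordless cycle of $Q_{C(B)}$ contained two arrows outside $Q_B$, then for a cut arrow $\gamma_j$ on it the rest of the cycle is an antiparallel path $\delta$ not lying in $\field Q_B$; since by Theorem~\ref{thm:Dynkin-rel} there are at most two antiparallel shortest paths, the relation of $B$ corresponding to $\gamma_j$ must be a zero relation $\delta'$ given by the unique antiparallel path inside $Q_B$, so the full subquiver on $\delta$ and $\delta'$ is of type $G(a,b)$; but then $\delta'\neq 0$ in $C(B)$ by Theorem~\ref{thm:Dynkin-rel}, while $\delta'\in I_B\subseteq I_{C(B)}$ forces $\delta'=0$, a contradiction. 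Your ``at least one'' half also leans on an unjustified minimality claim (``minimal there since $\zeta$ is chordless in $Q_B$'' is not an argument, and in the $G(a,b)$ case the relation antiparallel to $\eta$ need not be built from the arrows of $\zeta$), but that half is repairable: minimality does descend from $C(B)$ to $B$ because $I_B\subseteq I_{C(B)}$, or one can simply use that $Q_B$ has no oriented cycles since $B$ is iterated tilted of Dynkin type.
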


\begin{proof}
  Suppose that $B$ is not an admissible cut of $C=C(B)$. Then there
  exists a chordless cycle $\gamma$ in the quiver $Q_C$ of $C$ which
  contains at least two arrows which do not belong to the quiver $Q_B$
  of $B$. Denote by $\gamma_L\gamma_{L-1}\ldots\gamma_1$ the path
  obtained by passing along the cycle starting from some vertex
  $s(\gamma_1)$ of $\gamma$ and let $\Phi$ be the set of vertices such
  that $\{\gamma_j\mid j\in \Phi\}$ are the arrows which do not
  belong to $Q_B$. 

  Write $B=\field Q_B/I_B$ and $C=\field Q_C/I_C$.
  Now, by Theorem \ref{thm:split} we have $B=C/J$ for some ideal $J$ of $C$
  with $J\subseteq \rad^2 C$ and the arrows of $Q_C$ coincide with the
  arrows of $Q_{\ealg (B)}$. For each $j\in\Phi$ the arrow
  $\gamma_j$ corresponds to a generating
  relation $\rho_j$ since $\ealg(B)$ is the relation
  extension of $B$. 

  Observe that
  $\delta=\gamma_{j-1}\gamma_{j-2}\ldots\gamma_1\gamma_L\ldots
  \gamma_{j+1}$ is one a path in $Q_C$ which is antiparallel to
  $\gamma_j$ and $\delta$ is not contained in the path algebra $Q_B$
  since by hypothesis $\Phi$ consists of at least two elements.  By
  Theorem \ref{thm:Dynkin-rel},
  there are at most two paths in $Q_C$ which are antiparallel to
  $\gamma_j$ and therefore there exists precisely one path $\delta'$
  in $Q_B$ which is antiparallel to $\gamma_j$ in $Q_C$.  Consequently
  $\rho_j=\delta'$ is a zero relation. Hence the smallest full
  subquiver of $Q_C$ containing $\delta$ and $\delta'$ is isomorphic
  to $G(a,b)$, defined as in Section \ref{sec:rel-Dynkin}. Since
  $C(B)$ is a split extension of $B$, by \cite[2.3]{ACT} it follows
  that the ideal $I_B$ is contained in $I_C$. Thus $\delta'=0$ in $C$
  in contradiction to Theorem
  \ref{thm:Dynkin-rel}.
\end{proof}

\begin{remark}\label{rem:B-is-cut-of-R(B)}
By Theorem \ref{thm:split}, for $B$ an iterated tilted algebra with
$\gldim B\leq 2$, the algebras $C(B)$ and $\ealg(B)$ have the same
quiver, and therefore if $B$ is of Dynkin type $Q$ then $B$ is the
quotient of $\ealg(B)$ by an admissible cut. This however is not true
in general, as shows the following example. Let $B=\field Q_B/I_B$ be the 
algebra presented on the left hand side in the following picture. Then
the quiver of $\ealg(B)$ is as depicted on the right hand.
\begin{center}
  \begin{picture}(120,60)
    \put(0,0){
      \put(-40,52){$B\colon$}
      \multiput(-20,40)(40,0){2}{\circle*{3}}
      \multiput(-32,16)(64,0){2}{\circle*{3}}
      \put(0,0){\circle*{3}}
      \put(-17,40){\vector(1,0){34}}
      \put(-21.5,37){\vector(-1,-2){9}}
      \put(-29,14.5){\vector(2,-1){26}}
      \put(3,1.5){\vector(2,1){26}}
      \put(30.5,19){\vector(-1,2){9}}
      \qbezier[20](-20,36)(-30,17)(-5,4)
      \qbezier[20](20,36)(30,17)(5,4)
      \put(-28,28){\RVCenter{\small $\delta$}}
      \put(-18,5){\RVCenter{\small $\varepsilon$}}
    }
    \put(120,0){
      \put(-40,52){$Q_{\ealg(B)}\colon$}
      \multiput(-20,40)(40,0){2}{\circle*{3}}
      \multiput(-32,16)(64,0){2}{\circle*{3}}
      \put(0,0){\circle*{3}}
      \put(-17,40){\vector(1,0){34}}
      \put(-21.5,37){\vector(-1,-2){9}}
      \put(-29,14.5){\vector(2,-1){26}}
      \put(3,1.5){\vector(2,1){26}}
      \put(30.5,19){\vector(-1,2){9}}
      \put(18.5,37){\vector(-1,-2){17}}
      \put(-1.5,3){\vector(-1,2){17}}
      \put(0,42){\HBCenter{\small $\gamma$}}
      \put(-9,23){\LVCenter{\small $\beta$}}
      \put(9,23){\RVCenter{\small $\alpha$}}
      \put(-28,28){\RVCenter{\small $\delta$}}
      \put(-18,5){\RVCenter{\small $\varepsilon$}}
      \put(35,16){\LVCenter{\small $x$}}
    }
  \end{picture}
\end{center}
Clearly $\Delta=\{\beta,\alpha\}$ is not an admissible cut of
$Q_{\ealg(B)}$ since the cycle given by the path $\gamma\beta\alpha$
contains two arrows from $\Delta$. However $B$ is not an iterated
tilted algebra as shows the following argument. Suppose that $B$ is
iterated tilted of type $Q$. Then by Theorem \ref{thm:split} the
algebras $C(B)$ and $\ealg(B)$ have the same quiver and both are split
extensions of $B$. In particular, since $\varepsilon\delta=0$ in $B$
we have also $\varepsilon\delta=0$ in $C(B)$. For the ideal
$J=C(B)\trivpath{x}C(B)$, the quotient $C'=C(B)/J$ is again a
cluster-tilted algebra by Theorem \ref{thm:idempotent-quotient}. 
By \cite[Thm. 2.3]{BORS} there is a unique cluster-tilted algebra with
quiver $Q_{C'}$ and that algebra is known to be of Dynkin type
$\mathbb{D}_4$. This contradicts the description of relations in
\cite{BMR2}, see Section \ref{sec:rel-Dynkin}, where
$\varepsilon\delta\neq 0$. This shows that $B$ is not iterated tilted.
\end{remark}

\begin{remark}
\label{rem:B-is-cut-of-R(B)-bis}
  (a) Each iterated tilted algebra $B$ with $\gldim B\leq 2$ is the
  quotient of $\ealg(B)$ by the ideal $\Delta = \Ext^2_B(DB,B)$,
  which is generated by arrows corresponding to relations of $B$. It
  is unknown to the authors whether each such algebra $B$ is an
  admissible cut of $\ealg(B)$ by $\Delta$.

  (b) We observe that $B$ is an admissible cut of $\ealg(B)$ by
  $\Delta = {\rm Ext}^2_B(DB,B)$ if and only if $B$ is an admissible
  cut of $C(B)$ by $\Delta$.  To prove this statement, assume that $B$
  is the quotient of the relation-extension $\ealg(B)$ by the
  admissible cut $\Delta$.  By Theorem \ref{thm:split} the
  algebras $\ealg(B)$ and $C=C(B)$ are split extensions of $B$ and
  have isomorphic quivers. Therefore $\Delta$ is also an admissible
  cut of the quiver $Q_C$ of $C=\field Q_C/I_C$ and the arrows of $B$
  can be identified with the arrows of $C$ which are not in
  $\Delta$. Let $J$ be the ideal of $C$ such that $B\simeq C/J$.  By
  the above we have $J\supseteq \gen{I_C\cup \Delta}$ and it remains
  to show that $J\subseteq \gen{I_C\cup \Delta}$.  So let $\rho$ be a
  relation of $\field Q_C$ which belongs to $J$. Let
  $\rho=\sum_{i=1}^t\lambda_i\rho_i$ for some non-zero scalars
  $\lambda_i$ and some parallel paths
  $\rho_i=\rho_{i,N_i}\rho_{i,N_i-1}\cdots \rho_{i,1}$. If some
  $\rho_{i,j}\in \Delta$ then $\rho'=\rho-\lambda_i\rho_i\in J$ and by
  induction over the number of summands we can assume that
  $\rho'\in\gen{I_C\cup \Delta}$. Hence it remains to consider the
  case where no summand of $\rho$ contains an arrow of $\Delta$, that
  is, $\rho$ can be considered as element of $\field Q_B$.  Let $\pi\colon
  C \rightarrow \ealg(B)$ be the surjective algebra morphism of
  Theorem 1.1 and $\mu\colon \ealg(B) \rightarrow B$ the canonical
  map. Then $\mu \pi|_B = id_B$ and $\orho=\pi(\orho)$, where $\orho$
  denotes both the class of $\rho$ in the quotient $\field Q_B/I_B$
  and in $\field Q_C/I_C$.  Therefore
  $0=\mu(\orho)=\mu\pi(\orho)=\orho$ shows that indeed $\rho \in I_C$.

  (c) It is interesting to notice that the fact that both $\ealg(B)$
  and $C$ are split extensions of $B$ is essential for the preceding
  statement to hold. Let $C$, $D$ be algebras such that $D$ is a
  quotient of $C$ inducing an isomorphism of quivers $Q_D =
  Q_C$. Clearly the sets of arrows which are admissible cuts for the
  quivers of the two algebras are the same. However, if an algebra $B$
  is an admissible cut of $D$, then it is not always true that $B$ is
  also an admissible cut of $C$, as the following simple example
  shows.

  Let $Q$ be the quiver
  \begin{center}
    \begin{picture}(40,32)
      \put(0,-10){
        \multiput(0,20)(40,0){2}{\circle*{3}}
        \put(20,40){\circle*{3}}
        \put(2,22){\vector(1,1){16}}
        \put(22,38){\vector(1,-1){16}}
        \put(38,20){\vector(-1,0){34}}
        \put(20,17){\HTCenter{\small $\gamma$}}
        \put(9,32){\RBCenter{\small $\alpha$}}
        \put(32,32){\small $\beta$}
      }  
    \end{picture}
  \end{center}

  $C = kQ/\gen{\gamma \beta \alpha}$ and $D = C/ \gen{\beta \alpha}.$ Then $B= D/\gen{\gamma}
  \simeq C / \gen {\gamma, \beta \alpha}$ is an admissible cut of $D$,
  but is not an admissible cut of $C$. Observe that $C$ is not a split
  extension of $B$ since $I_B\neq 0$.
\end{remark}

\subsection{Admissible cuts and antiparallel relations}

We now start the investigation on quotients of cluster-tilted algebras
by admissible cuts by the following basic fact.

\begin{prop}
\label{prop:fund}
Let $B$ be a quotient by an admissible cut of a cluster-tilted algebra
$C$ of Dynkin type. Write $B=\field Q_B/I_B$ where $Q_B$ is the quiver
of $B$ and $I_B$ is an admissible ideal generated by the minimal set
of minimal relations $\{\rho_i\mid i=1,\ldots t\}$.  Then $C$ is a
split extension of $B$ by an ideal $M =
\gen{\alpha_1,\alpha_2,\ldots,\alpha_t}$, generated by arrows such
that $\alpha_i$ is antiparallel to $\rho_i$ for each $i=1,\ldots,t$.
\end{prop}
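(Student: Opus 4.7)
The plan is to apply the classification of relations for cluster-tilted algebras of Dynkin type (Theorem \ref{thm:Dynkin-rel}) to $C$ and read off the relations of $B$ from it. Let $\Delta=\{\alpha_1,\ldots,\alpha_s\}$ be the admissible cut defining $B$, so that $B=\field Q_C/\gen{I_C\cup\Delta}=\field Q_B/I_B$, where $Q_B$ is $Q_C$ with the arrows of $\Delta$ removed and $I_B$ is the image of $I_C$ under the surjection $\field Q_C\to\field Q_B$ sending each $\alpha\in\Delta$ to zero. The canonical projection $\pi\colon C\to B$ has kernel $M=\gen{\alpha_1,\ldots,\alpha_s}$, which is nilpotent since $C$ is finite-dimensional. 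The candidate section $\sigma\colon B\to C$ is induced by the inclusion $\field Q_B\hookrightarrow\field Q_C$; the main task is to show that $\sigma(I_B)=0$ in $C$.

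The key combinatorial observation is the following dichotomy. Let $\mu$ be an arrow of $Q_C$ contained in some oriented chordless cycle, and let $\rho$ be the unique minimal relation of $C$ antiparallel to $\mu$ provided by Theorem \ref{thm:Dynkin-rel}. The arrows appearing in $\rho$ are exactly the non-$\mu$ arrows of the oriented chordless cycle(s) of $\Sigma_\mu$ passing through $\mu$ (one cycle if $\Sigma_\mu\simeq C(n)$, two if $\Sigma_\mu\simeq G(a,b)$). If $\mu=\alpha_i\in\Delta$, admissibility forces $\mu$ to be the cut arrow of each such cycle, so no other arrow of these cycles belongs to $\Delta$; hence $\rho$ is supported in $Q_B$ and is antiparallel to $\alpha_i$. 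If instead $\mu\notin\Delta$, then each oriented chordless cycle through $\mu$ contains a unique cut arrow distinct from $\mu$, which lies in the antiparallel path around that cycle and thus in the support of $\rho$.

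Using this dichotomy, the image in $\field Q_B$ of a minimal generator of $I_C$ is zero when $\mu\notin\Delta$, and equals a non-zero element $\rho_i\in\field Q_B$ (a single path or a commutativity difference) when $\mu=\alpha_i\in\Delta$. Since the minimal relations of $C$ generate $I_C$, the set $\{\rho_1,\ldots,\rho_s\}$ generates $I_B$, and each $\rho_i$, viewed in $\field Q_C$, lies in $I_C$. Hence $\sigma(\rho_i)=0$ in $C$, so $\sigma$ descends to a section $B\to C$ of $\pi$, proving that $C$ is a split extension of $B$ by $M$. Moreover, the $\rho_i$ are minimal relations by Theorem \ref{thm:Dynkin-rel}, and together they form a minimal generating set of $I_B$; hence $s=t$ and each $\alpha_i$ is antiparallel to $\rho_i$, as claimed.

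The main obstacle is the non-cut case of the dichotomy: one must argue that every antiparallel path around an oriented chordless cycle through a non-cut arrow $\mu$ contains the unique cut arrow of that cycle. This follows from the admissibility of $\Delta$ combined with the explicit shape of $\Sigma_\mu$, but care is needed when $\Sigma_\mu\simeq G(a,b)$, since $\mu$ may there lie in two chordless cycles simultaneously, and the admissibility argument then has to be applied independently to each of the two cycles containing $\mu$.
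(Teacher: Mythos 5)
Your argument is correct, and its combinatorial core coincides with the paper's: via Theorem \ref{thm:Dynkin-rel} you observe that the minimal relation of $C$ antiparallel to an arrow $\eta$ lying on a chordless cycle is supported entirely in $Q_B$ when $\eta$ belongs to the cut, and has every summand passing through a cut arrow when $\eta$ does not (admissibility being applied separately to the one or two chordless cycles through $\eta$, i.e.\ to the $C(n)$ and $G(a,b)$ configurations) --- this is exactly the dichotomy the paper records for $G(a,b)$. The difference is the final step: the paper feeds this dichotomy into the split-extension criterion of \cite[Thm.~2.5]{ACT}, whereas you build the section by hand, noting that $\field Q_B$ is a subalgebra of $\field Q_C$, that the induced ideal $I_B$ is generated by the relations $\rho_i$ antiparallel to the cut arrows, and that these already lie in $I_C$, so the inclusion descends to an algebra map $B\to C$ splitting the projection, whose kernel is the nilpotent ideal $\gen{\alpha_1,\ldots,\alpha_t}$. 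This makes the proof self-contained (in effect you reprove the relevant direction of the ACT criterion in this special case) at essentially no extra cost. The one point you assert rather than argue is that the $\rho_i$ form a minimal set of minimal relations generating $I_B$ (so that $s=t$); the paper's own proof is equally silent on this, and if you want it, it follows by lifting any expression $\rho_i=\sum_k\beta_k\sigma_k\gamma_k$ with $\sigma_k\in I_B$ from $\field Q_B$ to $\field Q_C$ and contradicting the minimality of the generating set of $I_C$ recorded after Theorem \ref{thm:Dynkin-rel}.
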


\begin{proof}
Let $\Gamma=\{\alpha_1,\ldots,\alpha_t\}$ be an admissible cut of
$Q_C$ such that $B=C/\gen{\Gamma}$. Notice that for each subquiver
$\Sigma\simeq G(a,b)$ of $Q_C$ either $\eta\in \Gamma$ or
$\alpha\colon v_i\ra v_{i+1}$ and $\beta\colon v_j'\ra v_{j+1}'$ belong both to
$\Gamma$ (for some $i,\,j$). This shows that in each minimal relation
$\sigma=\sum_{j=1}^N c_j \sigma_j$ (where $\sigma_j$ are parallel paths
and $c_j\neq 0$ coefficients) defining the ideal $I_C$ we have that if
$\sigma_j\in\gen{\Gamma}$ for some $j$ then $\sigma_j\in\gen{\Gamma}$
for all $j$ and consequently $\sigma\in\gen{\Gamma}$. Hence by
\cite[Thm. 2.5]{ACT} we know that $C$ is the split extension of $B$ by the ideal
$\gen{\Gamma}$.
\end{proof}

\begin{remark}
  By the above the arrows in the admissible cut $\Gamma$ are in
  one-to-one correspondence to the relations defining $I_B$, with each
  arrow antiparallel to the corresponding relation.  Hence if $\gldim
  B\leq 2$ then the quiver of $C$ is precisely the quiver of $B$ with
  arrows added antiparallel to the relations in $I_B$. Thus, according
  to the description of the quiver of the relation extension given in
  \cite[Th. 2.6]{ABS}, if $\gldim B\leq 2$ the quiver of $C$ coincides
  with the quiver of $\mathcal R (B)$.
\end{remark}

\subsection{Strongly simple connectedness}

We refer to Section \ref{sec:sc} and the references cited there for the
definition of simple connectedness and strongly simple connectedness
of algebras.

\begin{lem}\label{lem:ssc}
  Let $B$ be a quotient by an admissible cut $\Gamma$ of a cluster
  tilted algebra $C$ of Dynkin type. Then $B$ is a strongly simply
  connected algebra.  
\end{lem}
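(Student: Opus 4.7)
The plan is to reduce the problem to showing that $B$ is simply connected and then to invoke Remark~\ref{rem:sc=ssc}. For this, I first observe that $B$ is of finite representation type: being a quotient of the cluster-tilted algebra $C$ of Dynkin type (which is of finite representation type by \cite[Cor.~2.4]{BMR3}), any $B$-module lifts to a $C$-module through the canonical projection, so representation-finiteness transfers. By Remark~\ref{rem:sc=ssc}, it therefore suffices to prove that $B$ is simply connected, i.e.\ that $\pi(Q_B,I_B)=1$ for every presentation of $B$.

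Next I would analyse the local structure of $Q_B$ using Proposition~\ref{prop:fund} together with Theorem~\ref{thm:Dynkin-rel}. By Proposition~\ref{prop:fund}, the arrows of $\Gamma=\{\alpha_1,\dots,\alpha_t\}$ are in bijection with the generating relations $\rho_1,\dots,\rho_t$ of $I_B$, with $\alpha_i$ antiparallel to $\rho_i$. Each $\alpha_i$ sits in a subquiver $\Sigma_{\alpha_i}$ of $Q_C$ which by Theorem~\ref{thm:Dynkin-rel} is of type $C(n)$ (yielding a zero relation $\rho_i=\gamma^{n-1}$, whose cutting leaves only a simple directed path in $Q_B$, creating no new chordless cycle) or of type $G(a,b)$ (yielding a commutativity relation $\alpha^{a}=\beta^{b}$ which survives in $B$ and produces a non-oriented chordless cycle of $Q_B$ that is null-homotopic by that very relation).

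Then I would show that every non-oriented chordless cycle of $Q_B$ arises in precisely this way, so is null-homotopic. First, by Lemma~\ref{lem:no-oriented}, $Q_B$ contains no oriented chordless cycles, so every chordless cycle of $Q_B$ has at least one source and at least one sink. Given such a cycle, I would lift it to $Q_C$ and use Lemma~\ref{lem:no-bypass} plus Proposition~\ref{prop:cyc-oriented} to show the two directed paths it decomposes into must coincide with the pair $(\alpha^a,\beta^b)$ of a $G(a,b)$-subquiver with cut arrow $\eta$; if the cycle were not of this form, the corresponding cycle in $Q_C$ would have to contain a chord, which would propagate a chord back to $Q_B$ contradicting chordlessness.

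Finally, since every closed walk in a connected quiver is homotopy-equivalent to a product of chordless-cycle walks, and each such walk is null-homotopic in $\pi(Q_B,I_B)$ by the previous step, the fundamental group is trivial for every presentation of $B$, which gives simple connectedness and hence strong simple connectedness by Remark~\ref{rem:sc=ssc}. The main obstacle I anticipate is the combinatorial verification of step three, namely ruling out non-oriented chordless cycles of $Q_B$ that do not come from a $G(a,b)$-subquiver of $Q_C$; this requires a careful induction on the length of the cycle together with repeated use of Lemma~\ref{lem:no-bypass}.
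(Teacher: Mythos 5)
Your overall strategy coincides with the paper's: reduce to simple connectedness via representation-finiteness and Remark~\ref{rem:sc=ssc}, and show that every chordless cycle of $Q_B$ is non-oriented and comes from a $G(a,b)$-subquiver of $Q_C$ with the cut arrow $\eta$ removed. But your justification of that key combinatorial step is flawed. You argue that if a chordless cycle of $Q_B$ were not of this form, then ``the corresponding cycle in $Q_C$ would have to contain a chord, which would propagate a chord back to $Q_B$, contradicting chordlessness.'' This is backwards: since by Proposition~\ref{prop:cyc-oriented} a non-oriented cycle cannot be chordless in $Q_C$, every chordless cycle of $Q_B$ \emph{does} acquire at least one chord in $Q_C$, and any such chord is necessarily an arrow of the cut $\Gamma$ (otherwise it would already be a chord in $Q_B$); hence it never ``propagates back'' to $Q_B$, and no contradiction arises. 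What actually has to be proved is that this chord is \emph{unique} and splits the cycle into two oriented chordless cycles of $Q_C$ (this is also what forces the shape with exactly one source and one sink, i.e.\ the two directed arcs $\alpha^a,\beta^b$ you take for granted --- a priori a non-oriented chordless cycle could have several sources and sinks). The mechanism for uniqueness is not Lemma~\ref{lem:no-bypass} but the defining property of an admissible cut: if there were two chords $\eta_1,\eta_2\in\Gamma$, one assembles from them and suitable arcs of the cycle a chordless, hence oriented, cycle of $Q_C$ containing two arrows of $\Gamma$, contradicting that each oriented chordless cycle meets $\Gamma$ exactly once. Your sketch never invokes this counting property, and the anticipated ``induction with repeated use of Lemma~\ref{lem:no-bypass}'' will not supply it.

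Your endgame also differs from the paper's and leaves a second gap. The paper additionally proves that $Q_B$ has no oriented cycles at all and then concludes simple connectedness by showing $(Q_B,I_B)$ is its own universal cover, citing \cite[Thm.~4.2]{M-JAP}. You instead compute the fundamental group directly: the commutativity relation of each $G(a,b)$ indeed survives in $B$ (consistent with Proposition~\ref{prop:fund} and Theorem~\ref{thm:Dynkin-rel}), so every chordless cycle is null-homotopic \emph{for the presentation inherited from the cut}, and closed walks are generated by chordless cycles. But simple connectedness as defined in the paper requires $\pi(Q,I)=1$ for \emph{every} presentation, and the fundamental group genuinely depends on the presentation even for schurian triangular algebras (e.g.\ the quiver $1\to 2\to 3$ with an extra arrow $1\to 3$, with ideal $\gen{\beta\alpha}$ versus $\gen{\beta\alpha-\gamma}$). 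Your last sentence simply asserts the conclusion ``for every presentation'' without argument; to close this you would need either the covering-theoretic result the paper cites, or a separate argument for presentation-independence of the fundamental group in this situation, neither of which appears in your proposal.
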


\begin{proof}
  We know from Proposition \ref{prop:cyc-oriented} 
  that each chordless
  cycle in $Q_C$ is oriented and from Lemma \ref{lem:no-oriented} each
  chordless cycle in $Q_B$ is non-oriented.  We now proceed in steps.

  (1)\quad \emph{Each chordless cycle in $Q_B$ is non-oriented and
    obtained from a subquiver of $Q_C$ which is isomorphic to $G(a,b)$
    (for some $a$ and $b$) by removing the arrow corresponding to
    $\eta$.}

  Indeed, let $\Sigma\colon v_1\LL v_2 \LL \cdots \LL v_t \LL v_1$ be a
  chordless cycle in $Q_B$. If $\Sigma$ is oriented then $\Sigma$ can
  not be chordless in $Q_C$ since $B$ is the quotient by an admissible
  cut. If $\Sigma$ is non-oriented then by (1) it can also not be
  chordless in $Q_C$. So in any case there exists a chord $v_i\LL v_j$
  for some $i\not\equiv j\pm 1\ (\mod t)$.  After reordering, we can
  assume $i=1$ and take $j>1$ minimal such that a chord $\eta_1\colon v_1\LL
  v_j$ exists. Then $\Sigma_1\colon v_1\LL v_2\LL \cdots\LL v_j\LL v_1$ is a
  chordless cycle in $Q_C$ and therefore oriented. If we assume that
  $\Sigma_2\colon v_1\LL v_j\LL v_{j+1}\LL\cdots v_t\LL v_1$ is not a
  chordless cycle in $Q_C$ then there exists a chord $\eta_2\colon v_l\LL
  v_h$ for some $j\leq l<h-1\leq t$ (where $v_{t+1}\colon=v_1$) and if we
  take $l\geq j$ minimal and $h\leq t+1$ maximal then
  $\Sigma'\colon v_1\xLL{\eta_1} v_j\LL \cdots \LL v_l\xLL{\eta_2}
  v_h\LL\cdots \LL v_t\LL v_1$ is a chordless (and therefore oriented)
  cycle in $Q_C$ with two arrows $\eta_1$ and $\eta_2$ belonging to
  the admissible cut, a contradiction. This shows that $\Sigma_2$ is
  also oriented and therefore (1) holds.

  (2)\quad \emph{The quiver $Q_B$ is directed, that is, it does not
    contain an oriented cycle.}

  Assume by contradiction that an oriented cycle $\Gamma$ exists
  in $Q_B$ and suppose that $\Sigma$ is minimal with respect to the
  number of vertices. By (1) the cycle $\Sigma$ is not chordless in
  $Q_B$. This chord divides $\Sigma$ into two smaller cycles, one of
  them necessarily is oriented, in contradiction to the minimality of
  $\Sigma$.  

  (3)\quad \emph{The algebra is strongly simply connected.}

  Using (1) and (2) it is easy to see that the $(Q_B,I_B)$ is its own
  universal cover, in the sense of \cite{M-JAP}. Therefore by
  \cite[Thm~4.2]{M-JAP} the algebra $B$ is simply connected. Since $C$
  is of Dynkin type, then by \cite[Prop.1.2]{BMR} algebra $C$ and
  hence $B$ is of finite representation type and therefore by Remark
  \ref{rem:sc=ssc} the algebra $B$ is strongly simply connected.
\end{proof}

\subsection{Behaviour of the quadratic form}

For a definition of the quadratic forms $\chi_B$ and $q_B$ associated
to an algebra $B$ we refer to Section \ref{sec:qf} and the references
cited there. 

\begin{prop}
  \label{prop:qB-positive}
Let $B$ be a quotient by an admissible cut of a cluster-tilted algebra $C$ of
Dynkin type such that gldim $B \leq 2$. Then $q_B$ is positive
definite.
\end{prop}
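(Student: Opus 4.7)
By Remark \ref{rem:q=chi}, the hypothesis $\gldim B\le 2$ gives $q_B=\chi_B$, so it suffices to show that the Euler form is positive definite. Since $C$ is cluster-tilted of Dynkin type, it is of finite representation type, and therefore so is $B$: every $B$-module is a $C$-module via the surjection $C\twoheadrightarrow B$. Combined with Lemma \ref{lem:ssc}, which asserts that $B$ is strongly simply connected, these facts place $B$ in the setting of Bongartz's theorem, yielding that $q_B$ is at least weakly positive.

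The heart of the proof is promoting weak positivity to positive definiteness, and I would exploit the combinatorial description of $B$ furnished by Proposition \ref{prop:fund} together with Theorem \ref{thm:Dynkin-rel}. Write $B=kQ_B/I_B$ with a minimal generating set of relations $\rho_1,\ldots,\rho_t$ indexed so that $\rho_i$ is antiparallel to the $i$-th arrow $\alpha_i$ of the admissible cut $\Delta$. Since each chordless cycle of $Q_C$ is of type $C(n)$ or $G(a,b)$ and contributes exactly one arrow to $\Delta$ and one relation to $I_B$, the groups $\Ext^2_B(S_h,S_j)$ are determined precisely by these antiparallel relations; together with the absence of loops and of oriented cycles in $Q_B$ (Lemma \ref{lem:no-oriented}) one obtains the explicit presentation
\[
q_B(v) = \sum_{x \in Q_0} v_x^2 \;-\; \sum_{\alpha \in (Q_B)_1} v_{s(\alpha)} v_{t(\alpha)} \;+\; \sum_{i=1}^{t} v_{s(\rho_i)} v_{t(\rho_i)}.
\]

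From here the strategy is to analyze the form cycle by cycle: each chordless cycle of $Q_C$ of type $C(n)$ contributes a subform equivalent to the positive definite $\mathbb{A}_{n-1}$ Dynkin form on its vertex set, and each cycle of type $G(a,b)$ contributes a subform equivalent to a positive definite Dynkin form on its vertices, with the two antiparallel paths producing the needed cancellations. Gluing these contributions along shared vertices (using that the tree-like part of $Q_B$ outside the cycles contributes only $\mathbb{A}$-type summands) one shows that $q_B$ is equivalent to the positive definite Tits form of the underlying Dynkin type $Q$ of $C$, and positive definiteness of $q_B$ follows from Cartan--Killing.

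The main obstacle is the combinatorial gluing: controlling how distinct chordless cycles of $Q_C$ share vertices and arrows, and verifying that the cycle-wise contributions assemble cleanly into the Dynkin form of $Q$ without a residual isotropic direction. A cleaner but less self-contained alternative would be to first upgrade $B$ to an iterated tilted algebra of type $Q$ via the rolling construction of Section \ref{sec:rolling} and then conclude via derived invariance of $\chi_B$, but this risks circularity if Proposition \ref{prop:qB-positive} is used as an ingredient in establishing Theorem \ref{thm:iff-cut}.
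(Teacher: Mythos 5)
Your reduction is fine up to a point: invoking Remark \ref{rem:q=chi} and deriving from Proposition \ref{prop:fund} (and Theorem \ref{thm:Dynkin-rel}) the explicit expression
$$
q_B(v)=\sum_{x} v_x^2-\sum_{\alpha\in (Q_B)_1} v_{s(\alpha)}v_{t(\alpha)}+\sum_{i=1}^{t} v_{s(\rho_i)}v_{t(\rho_i)}
$$
is exactly how the paper begins. But the step you yourself call the heart of the proof is missing. Positive definiteness of the restrictions of $q_B$ to the vertex sets of the individual chordless cycles does not imply positive definiteness of $q_B$, and ``gluing these contributions along shared vertices'' is precisely the assertion that needs a proof; as you note, controlling how the cycles of $Q_C$ intersect is the whole difficulty, and no mechanism for doing so is given. (Your Bongartz remark is also a dead end: weak positivity is strictly weaker than positive definiteness, so it cannot serve as a stepping stone. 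And the local claim is slightly off: a cycle $C(n)$ involves $n$ vertices, so its contribution is a rank-$n$ form, not the $\mathbb{A}_{n-1}$ form.)

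The paper closes this gap with the quasi-Cartan machinery of Barot--Geiss--Zelevinsky \cite{BGZ}, which is the key ingredient your sketch lacks. Since $C$ is cluster-tilted of Dynkin type, $Q_C$ is mutation-equivalent to a Dynkin quiver and hence admits a positive definite quasi-Cartan companion $A_C$. The symmetric matrix $A$ of $q_B$ is, by the displayed formula, another quasi-Cartan companion of $Q_C$: $|A_{ij}|$ counts arrows of $Q_B$ plus relations, i.e.\ arrows of $Q_C$, between $i$ and $j$. Because $\Gamma$ is an admissible cut, each chordless (oriented) cycle of $Q_C$ contains exactly one arrow whose entry in $A$ is $+1$ and all others are $-1$, which is the sign condition of \cite[Prop.~1.4]{BGZ}; then \cite[Prop.~1.5]{BGZ} gives that $A$ is equivalent to $A_C$, hence positive definite. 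This is the global argument that replaces your cycle-by-cycle gluing. Your instinct about the alternative route is correct: deducing the result from $B$ being iterated tilted would be circular, since Proposition \ref{prop:qB-positive} (together with Lemma \ref{lem:ssc}) is what is used to prove that $B$ is iterated tilted in Theorem \ref{thm:main-cut}.
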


\begin{proof}
  Since $C$ is mutation equivalent to a Dynkin diagram, we know by
  \cite{BGZ} that the quiver $Q_C$ admits a positive definite
  quasi-Cartan companion $A_C$. By Remark~\ref{rem:q=chi} it suffices
  thus to show that the quasi-Cartan matrix $A$ defined by the homological
  from $\chi_B$ is equivalent to $A_C$.

  It follows from Proposition~\ref{prop:fund} that 
  $$
  q_B(x)=
  \sum_{i=1}^n x_i^2 
  -\sum_{\alpha\in (Q_B)_1} x_{s(\alpha)} x_{t(\alpha)}
  +\sum_{\gamma\in\Gamma} x_{s(r_\gamma)} x_{t(r_\gamma)}.
  $$
  Therefore, the quasi-Cartan matrix $A$ defined by $q_B(x)=x^\top
  A x$ satisfies the property that $|A_{ij}|$ equals the number of
  arrows or relations (in either direction) in $B$ between the vertices $i$
  and $j$ or equivalently the number of arrows (in either direction)
  in $Q_C$. This shows that $A$ is quasi-Cartan companion of $Q_C$.
  Since $\Gamma$ is an admissible cut in $Q_C$, in each
  oriented cycle of $Q_C$ there is precisely one arrow $i\ra j$ for
  which $A_{ij}=1$ and for all other arrows $i\ra j$ in the same cycle
  we have $A_{ij}=-1$. Therefore $A$ satisfies the sign condition in
  \cite[Prop.~1.4]{BGZ} and by \cite[Prop.~1.5]{BGZ} the two matrices
  $A$ and $A_C$ are equivalent.   
\end{proof}

\subsection{Main result on admissible cuts}

We now have now gathered sufficient information on admissible cuts to
be able to prove the main result on admissible cuts for cluster-tilted
algebras of Dynkin type.

\begin{thm}\label{thm:main-cut}
Let $B$ be a quotient by an admissible cut of a cluster-tilted algebra
$C$ of Dynkin type $Q$. If $\gldim B \leq 2$ then $B$ is iterated
tilted of Dynkin type $Q$.
\end{thm}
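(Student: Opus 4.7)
The strategy is to verify the three classical criteria that together characterize iterated tilted algebras of Dynkin type $Q$, and then invoke the corresponding classification result. Namely, I would show that $B$ is (i) of finite representation type, (ii) strongly simply connected, and (iii) has positive definite Tits form whose quasi-Cartan equivalence class is that of the Dynkin diagram $Q$.

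For condition (i), since $C$ is cluster-tilted of Dynkin type, it is representation-finite by \cite[Cor.~2.4]{BMR3}, and consequently so is the quotient $B$. Condition (ii) is precisely Lemma \ref{lem:ssc}. For condition (iii), Proposition \ref{prop:qB-positive} already shows that $q_B$ is positive definite; moreover, inspection of its proof yields the stronger fact that the quasi-Cartan matrix attached to $q_B$ is equivalent to a quasi-Cartan companion of $Q_C$, which by the mutation equivalence of $Q_C$ with $Q$ and by the sign criterion of \cite{BGZ} lies in the equivalence class of the Dynkin diagram $Q$. Since $\gldim B\leq 2$, Remark \ref{rem:q=chi} gives $q_B=\chi_B$, so this data has its standard derived-invariant meaning.

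With (i)--(iii) in hand, the theorem follows by invoking a classical characterization from the theory of strongly simply connected algebras (e.g.\ in the spirit of Bretscher--L\"aser--Riedtmann and subsequent refinements by Bongartz and Skowro\'nski): a strongly simply connected, representation-finite algebra whose Tits form is positive definite of Dynkin type $Q$ is iterated tilted of type $Q$. The main obstacle, and the reason the strengthening of (iii) matters, is to pin down the Dynkin type as exactly $Q$ rather than merely some Dynkin type of the same rank; this is handled precisely by the quasi-Cartan equivalence argument extracted from Proposition \ref{prop:qB-positive}, which transfers the Cartan-Killing type from the quiver $Q_C$ of the cluster-tilted algebra to the homological form of $B$.
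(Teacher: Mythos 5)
Your proposal is correct and follows essentially the same route as the paper: the paper's proof consists precisely of combining Lemma \ref{lem:ssc} (strong simple connectedness) with Proposition \ref{prop:qB-positive} (positive definiteness of $q_B$, whose proof indeed carries the quasi-Cartan equivalence with a companion of $Q_C$ that fixes the type) and then invoking the Assem--Skowro\'nski criterion \cite{AS1} for iterated tilted algebras of Dynkin type. Your additional condition (representation-finiteness) and the vaguer attribution of the final classification result are the only differences, and they do not change the argument.
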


\begin{proof}
  By Proposition~\ref{prop:qB-positive} the geometric form $q_B$ of
  $B$ is positive definite and by Lemma~\ref{lem:ssc} the algebra $B$
  is strongly simply connected. It follows thus from \cite{AS1} that
  $B$ is iterated tilted of Dynkin type.
\end{proof}

\begin{remark}
The following example shows that this result can not be extended
to cluster-tilted of type $\widetilde{\An}_n$. Let $B=\field Q_B/I_B$
where $Q_B$ is as depicted below on the left hand side and $I_B$ is
generated by the relation $\beta\alpha$. We indicated this below the
quiver $Q_B$. In the middle column the quiver and relations of
$\ealg(B)$ are shown. Observe that $\{\beta\}$ is an admissible cut of
the quiver of $\ealg(B)$. Finally on the left hand side you can see the
quotient of $\ealg(B)$ by the admissible cut $\{\beta\}$.

\begin{center}
  \begin{picture}(280,90)
    \put(0,20){
      \put(-20,55){$B\colon$}
      \multiput(0,0)(40,0){2}{\circle*{2}}
      \multiput(0,40)(40,0){2}{\circle*{2}}
      \put(3,0){\vector(1,0){34}}
      \put(0,3){\vector(0,1){34}}
      \put(2,38){\vector(1,-1){36}}
      \put(37,40){\vector(-1,0){34}}
      \put(25,24){\HVCenter{\small $\beta$}}
      \put(20,42){\HBCenter{\small $\alpha$}}      
      \put(20,-20){\HBCenter{\small $\beta\alpha=0$}}
    }
    \put(120,20){
      \put(-20,55){$\ealg(B)\colon$}
      \multiput(0,0)(40,0){2}{\circle*{2}}
      \multiput(0,40)(40,0){2}{\circle*{2}}
      \put(3,0){\vector(1,0){34}}
      \put(0,3){\vector(0,1){34}}
      \put(2,38){\vector(1,-1){36}}
      \put(37,40){\vector(-1,0){34}}
      \put(40,3){\vector(0,1){34}}
      \put(25,24){\HVCenter{\small $\beta$}}
      \put(20,42){\HBCenter{\small $\alpha$}}      
      \put(42,20){\LVCenter{\small $\gamma$}}
      \put(20,-20){\HBCenter{\small $\beta\alpha=\gamma\beta=\alpha\gamma=0$}}
    }
    \put(240,20){
      \put(-20,55){$\ealg(B)/\langle\beta\rangle\colon$}
      \multiput(0,0)(40,0){2}{\circle*{2}}
      \multiput(0,40)(40,0){2}{\circle*{2}}
      \put(3,0){\vector(1,0){34}}
      \put(0,3){\vector(0,1){34}}
      \put(37,40){\vector(-1,0){34}}
      \put(40,3){\vector(0,1){34}}
      \put(20,42){\HBCenter{\small $\alpha$}}      
      \put(42,20){\LVCenter{\small $\gamma$}}
      \put(20,-20){\HBCenter{\small $\alpha\gamma=0$}}
    }
  \end{picture}
\end{center}
Notice that $B$ is a tilted algebra of type $\widetilde{\An}_3$
and that $\gldim B \leq 2$ and hence $C=\ealg(B)$ is a cluster-tilted
algebra of type $\widetilde{\An}_3$, but the quotient
$B'=\ealg(B)/\langle\beta\rangle$ is not iterated tilted of any type
as shows the following argument.  Assume that $B'$ is an iterated
tilted algebra. Then the quiver of $\ealg(B')$ is isomorphic to
$Q_{\ealg(B)}=Q_C$. But by \cite[Thm. 2.3]{BORS} there is a unique
cluster-tilted algebra with quiver $Q_{C}$ and consequently by Theorem
\ref{thm:split} the algebra $B'$ is iterated tilted of type
$\widetilde{\An}_3$. But this constradicts the description in
\cite{AS2} of iterated tilted algebras of type $\widetilde{\An}_n$,
where it is shown that in a non-oriented cycle there must be as many
relatio s in clockwise orientations as there are relations in
counter-clockwise orientation.
\end{remark}

We prove now the main result of this section.

\renewcommand{\thedefthm}{\ref{thm:iff-cut}}
\begin{defthm}
  An algebra $B$ with $\gldim B\leq 2$ is iterated tilted of Dynkin type $Q$
  if and only if it is the quotient of a cluster-tilted algebra of type $Q$
  by an admissible cut.
\end{defthm}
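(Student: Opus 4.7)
The plan is to assemble Theorem \ref{thm:iff-cut} directly from the two directional results already established in the section, namely Proposition \ref{prop:cut-exists} and Theorem \ref{thm:main-cut}. The work is essentially to observe that together they exhaust both implications of the equivalence.

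For the forward direction, I would start with an iterated tilted algebra $B$ of Dynkin type $Q$ with $\gldim B\leq 2$. By Section \ref{sec:tilt-cmplx} we may write $B=\EndD(T)$ for some tilting complex $T$ in $\Der(\field Q)$. The cluster-tilted algebra associated to $B$ is $C(B)=\End_{\CluCat(\field Q)}(T)$, which by construction is cluster-tilted of type $Q$. Theorem \ref{thm:split} then guarantees that $C(B)$ is a split extension of $B$ whose quiver coincides with $Q_{\ealg(B)}$. Finally, Proposition \ref{prop:cut-exists} tells us that $B$ is obtained from $C(B)$ by an admissible cut. This gives the desired presentation of $B$ as a quotient of a cluster-tilted algebra of type $Q$ by an admissible cut.

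For the converse direction, suppose $B$ is a quotient of a cluster-tilted algebra $C$ of Dynkin type $Q$ by an admissible cut, and assume $\gldim B\leq 2$. Then Theorem \ref{thm:main-cut} applies verbatim and gives that $B$ is iterated tilted of Dynkin type $Q$.

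The proof therefore has no real obstacle: the two implications have been reduced to previously proved results, and the only thing to check is that the Dynkin type is preserved correctly in each direction, which is automatic from the definitions ($C(B)$ is built from the same hereditary algebra $H=\field Q$ as $B$, and Theorem \ref{thm:main-cut} preserves the type $Q$ of the starting cluster-tilted algebra). The heavy lifting, namely the verification that admissible cuts produce strongly simply connected algebras with positive definite Tits form (Lemma \ref{lem:ssc} and Proposition \ref{prop:qB-positive}), together with the splitting result Theorem \ref{thm:split} for the other direction, has already been carried out.
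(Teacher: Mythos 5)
Your proposal is correct and follows essentially the same route as the paper: the converse direction is exactly Theorem \ref{thm:main-cut}, and the forward direction rests on Proposition \ref{prop:cut-exists} together with Theorem \ref{thm:split}. The only (inessential) difference is that the paper phrases the forward direction by first exhibiting $B$ as an admissible cut of $\ealg(B)$ and then transferring to $C(B)$ via Remark \ref{rem:B-is-cut-of-R(B)-bis}(b), whereas you invoke Proposition \ref{prop:cut-exists} directly in its stated form about $C(B)$.
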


\begin{proof}
  If $C$ is a cluster-tilted algebra of Dynkin type $Q$ then by
  Theorem \ref{thm:main-cut}, each quotient $B$ of $C$ by a admissible
  cut is an iterated tilted algebra with $\gldim B\leq 2$.

  Conversely, if $B$ is an iterated tilted algebra of Dynkin type $Q$
  with $\gldim B\leq 2$ then by Proposition \ref{prop:cut-exists} the
  algebra $B$ is a quotient of the relation-extension $\ealg(B)$ by an
  admissible cut $\Delta$. By \ref{rem:B-is-cut-of-R(B)-bis}.(b) it
  follows that $B$ is also an admissible cut of $C$ by $\Delta$.
\end{proof}

\subsection{Characterization when $\ealg(B)\simeq C(B)$}
We now want to study the relationship between a cluster-tilted algebra
$C$, a quotient $B$ of $C$ by an admissible cut and its relation
extension $\ealg(B)$.

\begin{remark}
\label{rem:not-rel-ext}
The following example shows that $C$ is in general
not the relation extension of $B$. To abbreviate notation we indicated
by dotted arcs where the composition of two consecutive arrows is zero.
\begin{center}
  \begin{picture}(300,55)
    \put(0,8){
      \put(-10,40){\small $C\colon$}
      \multiput(0,0)(30,0){3}{\circle*{3}}
      \multiput(0,30)(60,0){2}{\circle*{3}}
      \put(0,26){\vector(0,-1){22}}
      \multiput(4,0)(30,0){2}{\vector(1,0){22}}
      \put(60,4){\vector(0,1){22}}
      \put(27,3){\vector(-1,1){24}}
      \put(57,27){\vector(-1,-1){24}}
      \qbezier[10](1.5,10)(3,3)(10,1.5)
      \qbezier[10](16,1.5)(22.5,3)(19,9.5)
      \qbezier[10](58.5,9.5)(57,3)(50,1.5)
      \qbezier[10](44,1.5)(37.5,3)(41,9.5)
      \qbezier[10](1.5,16)(3,22.5)(9.5,19)
      \qbezier[10](58.5,16)(57,22.5)(50,18.5)
      \put(-3,15){\RVCenter{\small $\alpha$}}
      \put(63,15){\LVCenter{\small $\delta$}}
      \put(15,-3){\HTCenter{\small $\beta$}}
      \put(45,-3){\HTCenter{\small $\gamma$}}
      \put(15,19){\small $\psi$}
      \put(47,19){\RBCenter{\small $\varphi$}}
    }
    \put(120,8){
      \put(-10,40){\small $B\colon$}
      \multiput(0,0)(30,0){3}{\circle*{3}}
      \multiput(0,30)(60,0){2}{\circle*{3}}
      \put(0,26){\vector(0,-1){22}}
      \multiput(4,0)(30,0){2}{\vector(1,0){22}}
      \put(60,4){\vector(0,1){22}}
      \qbezier[10](1.5,10)(3,3)(10,1.5)
      \qbezier[10](58.5,9.5)(57,3)(50,1.5)
      \put(-3,15){\RVCenter{\small $\alpha$}}
      \put(63,15){\LVCenter{\small $\delta$}}
      \put(15,-3){\HTCenter{\small $\beta$}}
      \put(45,-3){\HTCenter{\small $\gamma$}}
    }
    \put(240,8){
      \put(-10,40){\small $\ealg(B)\colon$}
      \multiput(0,0)(30,0){3}{\circle*{3}}
      \multiput(0,30)(60,0){2}{\circle*{3}}
      \put(0,26){\vector(0,-1){22}}
      \multiput(4,0)(30,0){2}{\vector(1,0){22}}
      \put(60,4){\vector(0,1){22}}
      \put(27,3){\vector(-1,1){24}}
      \put(57,27){\vector(-1,-1){24}}
      \qbezier[10](1.5,10)(3,3)(10,1.5)
      \qbezier[10](16,1.5)(22.5,3)(19,9.5)
      \qbezier[10](58.5,9.5)(57,3)(50,1.5)
      \qbezier[10](44,1.5)(37.5,3)(41,9.5)
      \qbezier[10](1.5,16)(3,22.5)(9.5,19)
      \qbezier[10](58.5,16)(57,22.5)(50,18.5)
      \qbezier[12](22,10)(30,5)(38,10)
      \put(-3,15){\RVCenter{\small $\alpha$}}
      \put(63,15){\LVCenter{\small $\delta$}}
      \put(15,-3){\HTCenter{\small $\beta$}}
      \put(45,-3){\HTCenter{\small $\gamma$}}
      \put(15,19){\small $\psi$}
      \put(47,19){\RBCenter{\small $\varphi$}}
    }
  \end{picture}
\end{center}

On the left hand side the cluster-tilted algebra $C$ is depicted. Then
$\Gamma=\{\varphi,\psi\}$ is an admissible cut and
$B=C/\gen{\Gamma}$ is as shown in the middle. On the right hand side 
we see the relation extension $\ealg(B)$. Note that
here we have $\psi\varphi=0$ whereas in $C$ this composition is
non-zero. 
\end{remark}

We describe now when $C(B)\simeq\ealg(B)$ for an iterated tilted
algebra $B$ such that $\gldim B\leq 2$. We know by Theorem
\ref{thm:split} that there is an exact sequence of algebra
homomorphisms $ B\ra C(B)\xrightarrow{\pi}\ealg(B)\ra B $ whose
composition is the identity map. Moreover, the kernel of $\pi$ is
contained in $\rad^2 C$.  Thus, we may assume that the
presentations of $C(B)$ and $\ealg(B)$ extend the presentation of
$B$, see section \ref{subsec:quivers}, and denote by $ \eta_1,
\dots, \eta_n$ the arrows of $Q _{C(B)} = Q_{\ealg (B)}$ which are not
arrows of $B$. Then by Propostion~\ref{prop:Ker}, we have $\Ker \pi =
\gen {\eta_1, \dots, \eta_n}^2_{C(B)}$, where the subscript
indicates that $\gen{\eta_1,\ldots,\eta_n}$
has to be considered as ideal of $C(B)$.

\begin{prop}
  \label{prop:char-r=c}
  Let $B$ be an iterated tilted algebra such that $\gldim B\leq 2$ and
  let $ \eta_1, \dots, \eta_n$ be as above. Then the following
  conditions are equivalent.
  \begin{itemize}
  \item[{\rm (a)}] $\ealg(B)\simeq C(B)$.

  \item[{\rm (b)}] $\gen {\eta_1, \dots, \eta_n}^2_{C(B)}=0$.

  \item[{\rm (c)}] $\eta_i\, \mu \,\eta_j = 0$ in $ C(B)$ for any path
    $\mu \in kQ_B$ and for all $1\leq i,j \leq n$.
  \end{itemize}

  If we assume moreover that $B$ is of Dynkin type then {\rm (a)},
    {\rm (b)}
    and {\rm (c)} are equivalent to the following condition.

  \begin{itemize}
  \item [{\rm (d)}] Let $\rho_1\colon a \rightarrow i$, $\rho_2\colon
    j \rightarrow b$ be minimal relations in $B$ such that there is a
    non-zero path $\mu\colon a \rightarrow b$ in $kQ_B$.  Then
      for $h=1$ or $h=2$ the following holds:
      there are paths $\mu_1, \mu_2$ such that $\mu =\mu_2
      \mu_1$, an arrow
      $\alpha_h$ and, in case $\rho_h$ is not a zero relation then
      there exists a path $\gamma_h$ not involving $\alpha_h$ (set
      $\gamma_h=0$ otherwise) such
      that $\rho_1= \alpha_1\mu_1 - \gamma_1$ or $\rho_2=
      \mu_2\alpha_2 - \gamma_2$ respectively. Furthermore, $\rho_h$ is the
      only minimal relation involving $\alpha_h$.
  \end{itemize}
\end{prop}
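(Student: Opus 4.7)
The plan is to reduce the equivalence $(a)\iff(b)$ to Theorem \ref{thm:split} and Proposition \ref{prop:Ker}, handle $(b)\iff(c)$ by an induction on the number of $\eta$-arrows in paths generating the ideal, and then prove the equivalence with $(d)$ under the Dynkin hypothesis by exploiting the description of relations from Theorem \ref{thm:Dynkin-rel}.

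For $(a)\iff(b)$ I would use that Theorem \ref{thm:split} provides the canonical surjection $\pi(B)\colon C(B)\to \ealg(B)$ with $\ker \pi(B)\subseteq\rad^2 C(B)$, together with Proposition \ref{prop:Ker} which identifies $\ker \pi(B)=\gen{\eta_1,\ldots,\eta_n}^2_{C(B)}$; thus $\pi(B)$ is an isomorphism precisely when this ideal vanishes. For $(b)\iff(c)$, the direction $(b)\Rightarrow(c)$ is immediate. For the converse, the ideal $\gen{\eta_1,\ldots,\eta_n}^2_{C(B)}$ is spanned by paths of $Q_{C(B)}$ containing at least two $\eta$-arrows; writing any such path in the form $\eta_i\cdot c\cdot\eta_j$ with $c$ a path of $Q_{C(B)}$, an induction on the number of $\eta$-arrows in $c$ — at each step splitting $c$ at its rightmost $\eta$ and applying hypothesis $(c)$ to the innermost factor — reduces to the base case $c\in kQ_B$, which is exactly $(c)$.

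For the Dynkin case, recall from Proposition \ref{prop:fund} that each $\eta_i$ is antiparallel to $\rho_i$, and from Proposition \ref{prop:cyc-oriented} and Theorem \ref{thm:Dynkin-rel} that $\eta_i$ sits in a unique chordless oriented cycle $\Sigma_{\eta_i}$ of type $C(n)$ or $G(a,b)$. Since $\{\eta_1,\ldots,\eta_n\}$ is an admissible cut of $Q_{C(B)}$, each chordless cycle of $Q_{C(B)}$ corresponds bijectively to a unique $\eta_i$, and consequently the minimal relations of $B$ involving a given arrow $\alpha$ of $Q_B$ are in bijection with the chordless cycles of $Q_{C(B)}$ containing $\alpha$. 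For $(d)\Rightarrow(c)$: given $\rho_1,\rho_2$ and a non-zero path $\mu\colon a\to b$ in $kQ_B$, suppose without loss of generality $h=1$, so $\mu=\mu_2\mu_1$ and $\rho_1=\alpha_1\mu_1-\gamma_1$ with $\rho_1$ the unique minimal relation involving $\alpha_1$. By the bijection, $\alpha_1$ lies in the single chordless cycle $\Sigma_{\eta_{\rho_1}}$, so $\Sigma_{\alpha_1}$ is a simple oriented cycle of type $C(m)$; Theorem \ref{thm:Dynkin-rel} then yields the minimal zero relation $\mu_1\eta_{\rho_1}=0$ in $C(B)$. Consequently $\eta_{\rho_2}\mu\eta_{\rho_1}=\eta_{\rho_2}\mu_2(\mu_1\eta_{\rho_1})=0$, giving $(c)$; the case when $\mu$ vanishes in $B$ follows trivially from $B\hookrightarrow C(B)$.

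For $(c)\Rightarrow(d)$, assume $\eta_{\rho_2}\mu\eta_{\rho_1}=0$ in $C(B)$ with $\mu$ non-zero in $B$. Since $C(B)$ is schurian by \cite[Lemma 1.8]{BMR}, the vanishing of this single path must be traced to a minimal generator of $I_{C(B)}$; by Theorem \ref{thm:Dynkin-rel} such a generator is antiparallel to a unique arrow of $Q_{C(B)}$. A case analysis on $\Sigma_{\eta_{\rho_1}},\Sigma_{\eta_{\rho_2}}\in\{C(n),G(a,b)\}$, together with Lemma \ref{lem:no-bypass} and the non-vanishing of $\mu$ in $B$, shows that the responsible arrow must be either the first arrow $\alpha_1$ of $\rho_1$ or the last arrow $\alpha_2$ of $\rho_2$; reading the corresponding cycle off Theorem \ref{thm:Dynkin-rel} produces the factorization $\mu=\mu_2\mu_1$, the presentation $\rho_h=\alpha_h\mu_h-\gamma_h$, and via the bijection between chordless cycles and the $\eta_i$, the uniqueness of $\rho_h$ as a minimal relation involving $\alpha_h$. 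This case analysis is the main obstacle: correctly identifying, in each combination of cycle types, the generator responsible for the vanishing and translating it into the combinatorial factorization of $(d)$ requires careful bookkeeping, although the schurian property and the rigidity of cluster-tilted algebras of Dynkin type keep each case tractable.
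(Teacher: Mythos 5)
Your handling of (a)$\Leftrightarrow$(b) (via Theorem \ref{thm:split} and Proposition \ref{prop:Ker}), of (b)$\Leftrightarrow$(c), and of the implication (d)$\Rightarrow$(c) is essentially the paper's argument; in particular your bijection between chordless cycles of $Q_{C(B)}$ through an arrow $\alpha$ and minimal relations of $B$ involving $\alpha$ is exactly how one deduces, from the hypothesis that $\rho_h$ is the only minimal relation involving $\alpha_h$, that $\Sigma_{\alpha_h}\simeq C(m)$ and hence that $\mu_1\eta_{\rho_1}=0$ (resp. $\eta_{\rho_2}\mu_2=0$) in $C(B)$.

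The implication (c)$\Rightarrow$(d), however, is where your proposal has a genuine gap: you defer it to a ``case analysis on $\Sigma_{\eta_{\rho_1}},\Sigma_{\eta_{\rho_2}}$'' which you acknowledge as the main obstacle and never carry out, and your phrase ``the vanishing must be traced to a minimal generator antiparallel to the first arrow of $\rho_1$ or the last arrow of $\rho_2$'' is not justified as stated (a priori the vanishing of $\eta_{\rho_2}\mu\eta_{\rho_1}$ need not be witnessed by a zero relation sitting at either end, and the fact that $\alpha_h$ is an arrow occurring in $\rho_h$ is a conclusion, not an input; also, in the configuration of (d), $\alpha_1$ is the arrow of the branch $\alpha_1\mu_1$ adjacent to the \emph{target} of $\rho_1$, not its first arrow). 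The missing idea, which makes the paper's proof short, is the admissible-cut obstruction: by Proposition \ref{prop:cut-exists} the set $\{\eta_1,\ldots,\eta_n\}$ is an admissible cut of $Q_{C(B)}$; if the zero relation $\eta_{\rho_2}\mu\,\eta_{\rho_1}$ were \emph{minimal}, Theorem \ref{thm:Dynkin-rel} would produce an arrow $\alpha$ with $\alpha\,\eta_{\rho_2}\mu\,\eta_{\rho_1}$ a chordless oriented cycle containing the two cut arrows $\eta_{\rho_1}$ and $\eta_{\rho_2}$ --- impossible. Hence the relation is not minimal, and (using that $\mu\neq 0$ in $B$, hence in $C(B)$, so no zero relation lies inside $\mu$) one obtains a factorization $\mu=\mu_2\mu_1$ with $\mu_1\eta_{\rho_1}$ or $\eta_{\rho_2}\mu_2$ a minimal \emph{zero} relation of $C(B)$. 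Only then does Theorem \ref{thm:Dynkin-rel} give the arrow $\alpha_h$ antiparallel to it, making $\alpha_1\mu_1$ (resp. $\mu_2\alpha_2$) a shortest path antiparallel to $\eta_{\rho_h}$, hence a branch of $\rho_h$, with $\Sigma_{\alpha_h}\simeq C(m)$ forcing $\rho_h$ to be the unique minimal relation involving $\alpha_h$. Without this non-minimality argument your sketch does not yield (d).
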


\begin{proof}
  Since $\Ker\pi = \gen {\eta_1, \dots, \eta_n}^2_{C(B)}$, the
  equivalence of (a) and (b) follows from the fact that $\gen {\eta_1,
    \dots, \eta_n}^2_{\ealg(B)}=0$. The equivalence of (b) and (c) is
  straightforward, so we only need to prove that (c) and (d) are
  equivalent in the Dynkin case.

  Thus we assume from now on that $B$ is of Dynkin type. Then
  $\{ \eta_1, \dots, \eta_n\}$ is an admissible cut of $C(B)$, by
  Proposition~\ref{prop:cut-exists}.

  First assume that (c) holds, and consider $\rho_1, \mu$ and $\rho_2$
  as in (d). Then each relation $\rho_i$ corresponds to an arrow
  $\eta_{k_i}$. We may assume that $k_h = h$ and by (c) we have that
  $\eta_{2} \mu \, \eta_1 =0$ in $C(B)$. If this relation is minimal
  we know from Theorem~\ref{thm:Dynkin-rel} that there exists an arrow
  $\alpha$ so that $\alpha \eta_{2} \mu \, \eta_1$ is a chordless
  oriented cycle, contradicting that $\{\eta_1, \dots, \eta_n\} $ is
  an admissible cut of $C(B)$. Therefore the relation $\eta_{2} \mu \,
  \eta_1 =0$ is not minimal, and hence there are paths $\mu_1, \mu_2$
  such that $\mu = \mu_2 \mu_1$ and either $\mu_1\eta_1$ or
  $\eta_2\mu_2$ is a minimal zero relation in $C(B)$.  In the first
  case, by Theorem~\ref {thm:Dynkin-rel}, there is an arrow $\alpha_1$
  such that $\mu_1\eta_1 \alpha_1$ is an oriented chordless cycle in
  $C(B)$, and $\alpha_1$ is not contained in any other chordless cycle
  in $C(B)$. Then $\alpha_1\mu_1$ is a shortest path antiparallel to
  $\eta_1$ and the statement follows from Theorem~\ref{thm:Dynkin-rel}
  using that $\rho_1$ is the relation antiparallel to $\eta_1$. The
  case when $\eta_2\mu_2$ is a minimal zero relation can be
  handled in a similar way, so (d) holds.

  Now assume that (d) holds and consider a path $\eta_s \mu \eta_r$
  with $\mu\in kQ_B$.  Consider the minimal relations $\rho_1, \rho_2$
  in $I_B$ antiparallel to $\eta_r, \eta_s$ respectively and let
  $h$ and $\alpha_h$, $\mu_1$, $\mu_2$, $\gamma_h$ be as in
  (d).  If $h=1$, that is, $\rho_1 = \alpha_1\mu_1
  -\gamma_1$ then $\eta_r$ is antiparallel to
  $\alpha_1\mu_1$, since $\eta_r$ is antiparallel to
  $\rho_1$. Then $\alpha_1\mu_1\eta_r$ is a chordless cycle in
  $C(B)$ and from the description of the relations in
  Theorem~\ref{thm:Dynkin-rel} we obtain that $\mu_1\eta_r=0$,
  since $\alpha_1$ is involved in a unique minimal relation. Thus
  $\eta_s \mu \eta_r= \eta_s \mu_2 \mu_1 \eta_r=0$ in this
  case. The same argument applies in the other case, proving (c).
\end{proof}

When the iterated tilted algebra $B$ is given by its quiver and
relations and is of Dynkin type then (d) provides an easy way to
determine if $\ealg (B)$ and $C(B)$ are isomorphic. For example, if
two minimal relations of $B$ are consecutive then (d) is not
satisfied. Using this one readily verifies that $\ealg (B_i)$ and $
C(B_i)$ are not isomorphic for the algebras $B_0$, $B_1$ and $B_2$ of
Example~\ref{D8}, and also that $\ealg(B) \not\simeq C(B)$ in
Remark~\ref{rem:not-rel-ext}.


\end{document}